\theoremstyle{plain}
\newtheorem{theorem}{Theorem}[section]
\newtheorem{corollary}[theorem]{Corollary}
\newtheorem{prop}[theorem]{Proposition}
\newtheorem{proposition}[theorem]{Proposition}
\newtheorem{lemma}[theorem]{Lemma}
\theoremstyle{definition}
\newtheorem{definition}[theorem]{Definition}
\newtheorem{remark}[theorem]{Remark}
\def \R{\mathbb R}
\def \N{{\mathbb N}}
\def\E{\mathbb E}
\def\P{\mathbb P}
\def \Z{\mathbb Z}
\newcommand{\cA}{\mathcal{A}}
\newcommand{\cB}{\mathcal{B}}
\newcommand{\cC}{\mathcal{C}}
\newcommand{\cE}{\mathcal{E}}
\newcommand{\cF}{\mathcal{F}}
\newcommand{\cG}{\mathcal{G}}
\newcommand{\cI}{\mathcal{I}}
\newcommand{\cJ}{\mathcal{J}}
\newcommand{\cK}{\mathcal{K}}
\newcommand{\cM}{\mathcal{M}}
\newcommand{\cO}{\mathcal{O}}
\newcommand{\cQ}{\mathcal{Q}}
\newcommand{\cX}{\mathcal{X}}
\newcommand{\cY}{\mathcal{Y}}
\newcommand{\cZ}{\mathcal{Z}}
\newcommand{\bigslant}[2]{{\raisebox{.2em}{$#1$}\left/\raisebox{-.2em}{$#2$}\right.}}
\newcommand{\floor}[1]{{\left\lfloor #1 \right\rfloor}}
\newcommand{\ceil}[1]{{\left\lceil #1 \right\rceil}}
\newcommand{\fW}{\mathfrak{W}}
\newcommand{\sB}{\mathscr{B}}
\newcommand{\sD}{\mathscr{D}}
\newcommand{\sP}{\mathscr{P}}
\newcommand{\sW}{\mathscr{W}}
\newcommand{\oa}{{\overline{a}}}
\newcommand{\del}{\partial}
\newcommand{\sset}{\subset}
\newcommand{\lf}{\left}
\newcommand{\rg}{\right}
\newcommand{\ga}{\gamma}
\newcommand{\ep}{\epsilon}
\newcommand{\ka}{\kappa}
\newcommand{\de}{\delta}
\newcommand{\be}{\beta}
\newcommand{\sig}{\sigma}
\newcommand{\la}{\lambda}
\newcommand{\al}{\alpha}
\newcommand{\smin}{\setminus}
\newcommand{\fR}{\mathfrak{R}}
\newcommand{\fB}{\mathfrak{B}}
\newcommand{\fC}{\mathfrak{C}}
\newcommand{\fD}{\mathfrak{D}}
\newcommand{\fF}{\mathfrak{F}}
\title{Spread of infections in a heterogeneous moving population}
\author{Duncan Dauvergne and Allan Sly}
\begin{document}

\maketitle

\begin{abstract}
We consider a model where an infection moves through a collection of particles performing independent random walks. In this model, Kesten and Sidoravicius established linear growth of the infected region when infected and susceptible particles move at the same speed. In this paper we establish a linear growth rate when infected and susceptible particles move at different speeds, answering an open problem from their work. Our proof combines an intricate coupling of Poisson processes with a streamlined version of a percolation model of Sidoravicius and Stauffer.
\end{abstract}


	\section{Introduction}
	
We consider an interacting particle system with two types of random walkers, labelled ${\bf S}$ and ${\bf I}$ for susceptible and infected (in other papers these are often called $A/B$ or sometimes $X/Y$).  When a susceptible particle meets an infected one, it too becomes infected. Special cases of this model include multi-particle Diffusion Limited Aggregation (MDLA) and the Frog Model.

Taking terminology from the epidemiology literature, we refer to this interacting particle system as an SI process. Formally, we define the SI process $X_t$ as follows. The process is initialized with a sea of particles distributed according to a rate $\mu$ Poisson process on $\Z^d$.  All particles are initially {\bf susceptible} and each susceptible particle performs a continuous-time rate-$D_S$ simple random walk on $\Z^d$. At time $0$, a single {\bf infected} particle is planted at the origin, which performs a continuous-time rate-$D_I$ simple random walk. The particles interact as follows: when a susceptible particle is at the same vertex as an infected particle, it immediately becomes infected, at which point it starts performing a  rate-$D_I$ random walk.  We let ${\bf S_t}$ and ${\bf I_t}$ denote the set of susceptible and infected  particles at time $t$, respectively.  For a particle $a$ we will let $\oa(t)$ denote its position at time $t$.  
Our main theorem states that for large $t$, every particle in a ball of radius $\ga t$ is infected.

\begin{theorem}\label{thm:main}
For any $\mu,D_I,D_S>0$ and any dimension $d\geq 1$ there exists $\gamma(\mu,D_I,D_S,d)>0$ and a random time~$T$ such that $T<\infty$ almost surely and for all $t>T$,
\[
\inf_{a\in {\bf S_t}} |\oa(t)|  \geq \gamma t.
\]
\end{theorem}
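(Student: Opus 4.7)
The plan is a space-time multiscale renormalization in the spirit of Sidoravicius--Stauffer, adapted to the asymmetric setting $D_I\ne D_S$. At a high level: first construct a convenient coupling that decouples each particle's pre- and post-infection motion; next define multiscale good blocks on which the infection front advances at a controlled linear speed; finally propagate goodness across scales by a percolation-style induction to obtain a uniform $\ga>0$, from which the statement of Theorem~\ref{thm:main} follows by Borel--Cantelli.

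For the coupling, for each particle $a$ I would sample in advance two independent walks, a rate-$D_S$ walk $W_a^S$ and a rate-$D_I$ walk $W_a^I$, and run $a$ along $W_a^S$ until its (data-dependent) infection time $\tau_a$, then switch to $W_a^I$ translated to $W_a^S(\tau_a)$. This reproduces the correct marginal law of the SI process while keeping the susceptible field an i.i.d.\ Poisson system of rate-$D_S$ walks whose law does not depend on the infection. Consequently, statements about susceptible density in a region reduce to Poisson computations, and the motion of an already-infected particle becomes a fresh rate-$D_I$ walk conditional on the infection location. This is the core of the intricate Poisson coupling referenced in the abstract; it is what allows the percolation argument to be stated without entangling infection times with newly infected trajectories.

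With the coupling in hand, I would tile space-time at scales $L_k$ growing, say, polynomially in $k$, and call a scale-$0$ box good if its initial particle count is within a constant factor of $\mu L_0^d$, no initial particle inside it makes an atypically large displacement during the relevant time window, and, in the coupled data, a single infection planted on the incoming face reaches the outgoing face within time proportional to $L_0$. Poisson and Gaussian tails give base-scale goodness with probability $\ge 1-\exp(-L_0^c)$. A scale-$(k+1)$ box is good if its scale-$k$ subboxes support an infection route of macroscopic speed $\ga_{k+1}\ge \ga_k-2^{-k}$, with the bad subboxes forming only small clusters. A standard recursive estimate keeps $\ga_k$ bounded below by some $\ga>0$; applying this to boxes centred at the origin and summing the failure probabilities yields a.s.\ finite $T$ and the infection range $\ga t$ claimed by the theorem.

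The main obstacle is the inductive step under $D_I\ne D_S$. In the equal-speed case one can swap the roles of susceptible and infected walks in the coupling, and this symmetry cleanly closes the percolation recursion. Without it, two qualitatively different failure modes must be controlled at once: when $D_S>D_I$, fast susceptible particles can outrun a slow infection front; when $D_I>D_S$, fast infected particles can overshoot pockets of susceptibles and fail to seed further infections locally. I expect to need separate spatial buffers of widths set by $\sqrt{D_S\cdot L_k}$ and $\sqrt{D_I\cdot L_k}$ around each scale-$k$ block, and to force the two buffers to cooperate in the renormalization --- in particular, to show that the density of good boxes degrades by at most $2^{-k}$ despite this asymmetry. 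Making the induction robust against both failure modes simultaneously, and thereby streamlining the Sidoravicius--Stauffer percolation framework in a speed-symmetric way, is where I anticipate the bulk of the technical work.
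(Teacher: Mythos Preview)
Your proposal has a genuine gap at the coupling step, and it is the gap that the paper works hardest to close. Pre-sampling $W_a^S, W_a^I$ for each particle does give a valid construction of the SI process, but it does \emph{not} make the event ``an infection planted on the incoming face of a box crosses to the outgoing face in time $O(L_0)$'' a function of the box's own data. Whether and when particles inside the box become infected depends on the trajectory of the incoming infected particle after it enters; under your coupling that trajectory is a time-shifted piece of some $W_a^I$ attached to a particle that was born, infected, and evolved far away, so it carries the full history of the process outside the box. Your scale-$0$ good events are therefore not even approximately independent across boxes, and the percolation recursion has nothing to bite on. The sentence ``statements about susceptible density in a region reduce to Poisson computations'' is also not right as stated: the field of \emph{currently susceptible} particles has been thinned by the infection in a non-Poisson way, and it is precisely this thinning that matters when you ask how many fresh particles an arriving infected walker can recruit.

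The paper's fix is not a different estimate but a different coupling. It builds the process out of an \emph{independent} Poisson process $\sP_B$ of trajectories for each block $B$, together with an independent fresh walk $W_{B,\operatorname{ig}}$ that overrides the incoming infected particle's future the moment it ignites $B$. Particles are only revealed when a block is coloured, and only if their \emph{past} trajectory has not already entered the coloured region; because the colouring front moves linearly while walks are diffusive, this conditioning costs a bounded factor uniformly in time. With this construction the good-block event $\cA_B$ is measurable with respect to $\cM_{B'}$ for $B'$ within distance $2$ of $B$, which is what makes the Sidoravicius--Stauffer comparison legitimate. Nothing in your scheme plays the role of $W_{B,\operatorname{ig}}$ or of the past-trajectory filter, and without them the block events are not finite-range dependent. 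Separately, the SSP machinery does not apply in $d=1$ (red cannot go around blue), and the paper handles that dimension by an entirely different one-sided argument; your outline does not account for this.
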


Kesten and Sidoravicius~\cite{kesten2005spread} established Theorem~\ref{thm:main} in the special case when $D_I = D_S$.  Their proof made use of the special property that when the speeds of the two types are equal, the infection process does not affect the trajectories of the particles. In particular, the particles viewed without labels perform independent simple random walks and remain in equilibrium. Because of this, it was possible for them to study a Poisson process on the space of paths. They performed a multiscale analysis to show that regions with atypically few particles are rare and sparsely arranged, so the infection process can spread quickly. In the case $D_I \neq D_S$ this analysis no longer works, as the infection process affects the trajectories and the two cannot be studied separately.  In this setting, Kesten and Sidoravicius stated they could show that all partices in a ball of radius $t\log^{-C}(t)$ are infected by time $t$, while omitting a proof. They left the case of unequal speeds as an open problem. Adding to the challenge, when $D_I \neq D_S$ the infection process is not monotone in any of the parameters of the model $\mu,D_I,D_S$. The process is not even monotone in the addition of new particles to the initial condition, as an additional particle may change the time of one infection, possibly delaying a later infection.

We note that the linear speed in Theorem \ref{thm:main} is the correct growth rate. Indeed,~\cite{kesten2005spread} showed the following result.

\begin{theorem}[\cite{kesten2005spread}, Theorem 1]\label{thm:KS.upper.bound}
For any $\mu,D_I,D_S>0$ and $d \in \N$ there exists $\gamma'(\mu,D_I,D_S,d)>0$ such that for large enough $t$
\[
\P\Big[\sup_{a\in {\bf I_t}} |\oa(t)|  \geq \gamma' t \Big] \leq 2e^{-t}.
\]
\end{theorem}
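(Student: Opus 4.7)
The plan is to trace the infection back from any far-away infected particle to the origin, decomposing its terminal displacement as a sum of independent rate-$D_I$ random walk increments spanning total time $t$, and then to bound this sum via Cramér/Chernoff. Concretely, I couple each initial particle $a$ to a pair of independent continuous-time simple random walks $W_a^S, W_a^I$ of rates $D_S$ and $D_I$: its trajectory follows $\oa(0) + W_a^S(s)$ for $s < \tau_a$ and switches at its infection time $\tau_a$ to $\oa(s) = \oa(\tau_a) + W_a^I(s) - W_a^I(\tau_a)$ thereafter. Given $a_k \in {\bf I_t}$, let $a_0, a_1, \dots, a_k$ be its infection ancestry with meeting times $0 = t_0 < t_1 < \dots < t_k \leq t$. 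Using $\overline{a_j}(t_{j+1}) = \overline{a_{j+1}}(t_{j+1})$ at each meeting and $\tau_{a_j} = t_j$, telescoping gives
\[
\overline{a_k}(t) = \sum_{j=0}^{k-1}\bigl(W_{a_j}^I(t_{j+1}) - W_{a_j}^I(t_j)\bigr) + \bigl(W_{a_k}^I(t) - W_{a_k}^I(t_k)\bigr),
\]
a sum of $k+1$ increments of independent rate-$D_I$ walks over disjoint time intervals of total length $t$. For any fixed choice of distinct particles and times, this sum is distributed as $W(t)$ for a single rate-$D_I$ walk, so standard Chernoff estimates yield $\P[|W(t)| \geq \gamma' t] \leq e^{-c(\gamma') t}$ with $c(\gamma') \to \infty$ as $\gamma' \to \infty$.

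The next step is to prepare the union bound. On an event of probability $\geq 1 - e^{-2t}$, Poisson concentration gives at most $C_1 t^d$ initial particles in $B(0, R_0 t)$, and random walk tail estimates on $W_a^S$ and $W_a^I$ ensure that no particle starting outside $B(0, R_0 t)$ ever enters $B(0, \gamma' t)$ by time $t$ once $R_0$ is chosen large enough relative to $\gamma'$. Thus every candidate chain realizing $|\overline{a_k}(t)| \geq \gamma' t$ must draw its particles from a pool of size $O(t^d)$, and we condition on this event henceforth.

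The main obstacle is controlling the combinatorial blowup in the number of possible chains: the number of ordered chains of length $k$ from a pool of size $O(t^d)$ with meeting times discretized to a grid of spacing $\varepsilon$ is of order $(C t^{d+1}/\varepsilon)^k$, which swamps the single-chain Chernoff factor $e^{-c(\gamma') t}$ once $k$ grows with $t$. The way forward is to incorporate the probabilistic cost of the meetings themselves, since the event that two independent random walks coincide at a prescribed discrete time is controlled by heat kernel estimates and supplies a factor that is geometrically small in the chain length $k$. Combining the per-link meeting cost with the Chernoff bound, taking $\gamma'$ (and hence $c(\gamma')$) large, and summing over $k$ yields the desired $2e^{-t}$ tail. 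I expect the cleanest implementation to be a multi-scale coarse-graining that organizes the chain-tracing inductively across space-time boxes, avoiding a direct sum over arbitrarily long chains; this is morally the approach taken in the Kesten--Sidoravicius paper.
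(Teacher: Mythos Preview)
The paper does not prove this statement at all: Theorem~\ref{thm:KS.upper.bound} is quoted verbatim from Kesten--Sidoravicius \cite{kesten2005spread} (as Theorem~1 of that paper) and is used as a black box, most visibly in the finite-speed argument of Section~\ref{S:SI-to-BR} and in the choice of $R_i$ in the proof of Theorem~\ref{T:linear-speed-RB-process}. There is therefore no ``paper's own proof'' to compare against; any comparison would have to be with the original Kesten--Sidoravicius argument, not with anything in this manuscript.

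As for the substance of your sketch: the telescoping observation is correct and is indeed the starting point of the Kesten--Sidoravicius upper bound. The genuine difficulty, which you identify but do not resolve, is the union bound over ancestry chains. Your suggestion to ``incorporate the probabilistic cost of the meetings'' is not enough as stated: in low dimensions the integrated meeting probability for two walkers over a time window of length $t$ is not small, so a per-link heat-kernel factor will not beat the $(Ct^{d+1})^k$ combinatorics once you sum over meeting times. The actual argument in \cite{kesten2005spread} does not union-bound over chains; it compares the infection front to a branching random walk (or equivalently a first-passage percolation on a space-time lattice) whose offspring law is controlled by the Poisson density $\mu$ and the jump rate, and then invokes standard linear-growth bounds for such processes. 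Your final sentence gestures toward this, but the preceding paragraphs do not constitute a proof, and the specific mechanism you propose (meeting-cost factors) would not close the gap.
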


\begin{remark}
	\label{R:variations}
	Our framework for proving Theorem \ref{thm:main} is quite robust and can accommodate variations of the setup. For example, the same proof also works if we modify the infection rule by only having susceptible particles become infected with probability $p \in (0, 1)$ each time they meet an infected particle, or if we allow infections to occur when susceptible particles enter a given radius of an infected particle. The proof also goes through verbatim if we start from any initial condition that stochastically dominates a Poisson initial condition, even though the SI process is not monotone in the initial condition. 
\end{remark}

\subsection{Proof Sketch}

The basic premise of the proof is that if the infection process is spreading linearly then it will keep visiting new regions of the plane with independently distributed particle configurations.  It infects the particles in each new region, some of which spread out which continues the linear spread of the process. Of course, this is only approximately true and sometimes a new area will by chance only be sparsely populated with susceptible particles.  Our proof is composed of two parts: an analysis of a competing growth model known as Sidoravicius--Stauffer percolation that will allow us to deal with sparsely populated regions, paired with a coupling of Poisson processes that can be thought of as a way to make rigorous the notion of constantly discovering new independent particles.

In order to preserve independence, we reveal the process only in a limited but growing portion of the plane.  Partitioning the plane into blocks, we ``colour'' a block the first time an infected particle enters that block (red) or if a certain time has elapsed since a neighbouring block was coloured (blue); the blue colouring ensures that the colouring process grows with a linear speed.  We only reveal particles when they are first in a coloured block.  For each block we generate an independent Poisson process on the space of particle trajectories. When a new block is coloured we use this Poisson process to decide what particles are revealed. To account for the dependence in the original SI process, we only add particles whose past trajectories would not have previously been in a coloured region. This is the correct conditional distribution. Since our colouring process is moving at linear speed whereas the random walks are diffusive, the probability that a particle trajectory was previously in a coloured region does not increase with $t$, ensuring that we typically reveal a large number of particles in each block.

Using this construction we can ``lower bound'' the number of particles and the spread of the infection in each newly discovered coloured block.  When a block is coloured red, an infected particle has entered and there will typically be a large number of newly discovered particles that it can infect in a short period of time. At least some of these particles will likely move to a neighbouring block rather quickly, propagating the infection. To make this precise, we define events $\cA_B$ for each block which have high probability, are almost independent (formally they are 1-dependent), and guarantee that if a block is red then all its neighbouring blocks will contain an infected particle after a short period of time. We have good control over $\P [\cA_B]$. In our setup, we can guarantee that the probability of the events $\cA_B$ will increase to $1$ as we use coarser blocks and slower growth rates for the blue process.

In order to show that most blocks are red and that the infection process is spreading quickly, we will relate the colouring process to {\bf Sidoravicius--Stauffer Percolation} (SSP), introduced in ~\cite{sidoravicius2019multi} \footnote{It was introduced with the somewhat unwieldy name First Passage Percolation in a Hostile Environment (FPPHE)}. This process, which was first used to study MDLA, is given by the competition between two i.i.d.\ first-passage percolation processes. The first of these is red, and is initiated at the origin at time $0$. As the red process tries to spread, it may encounter {\bf blue seeds} at certain vertices. Whenever it attempts to invade a blue seed, the blue growth process becomes activated from that vertex and starts to spread, competing with the red process to annex vertices. 
Sidoravicius and Stauffer used a multiscale construction to show that if the red growth rate is faster than the blue growth rate, $d \ge 2$, and the blue seeds are given by site percolation with small enough probability $p$, then with positive probability, most of $\Z^d$ becomes red.

%
%
%
%
%

We construct a simplified version of SSP in which the blue process spreads along an edge $(u, v)$ after a waiting time $\kappa(u, v) \gg 1$ while the red process spreads in time at most $1$. This simplifies the multi-scale analysis as we need only consider the random locations of the blue seeds and can ignore the randomness in the passage times. This is important, since in our setting these passage times may have complicated dependencies. This simplification also allows us to have a caveat where the red process can potentially `tunnel' through the blue process, a phenomenon that arises in our coupling with the SI model. 

 We take the blue seeds to be the blocks where $\cA_B^c$  holds and show that the blue blocks in the SI colouring process can be stochastically dominated by the blue set of our SSP.  While the red component of the SSP is infinite only with positive probability, we can make this probability go to $1$ by taking blocks with larger side-lengths. This, however, comes at the expense of a slower overall growth rate, and hence only shows Theorem \ref{thm:main} with a \emph{random} $\ga>0$. To get around this, we consider how SSPs with smaller side-lengths and different values of $\ka$ are coupled to the same SI process. Analyzing the relationships between the spread of these processes allows us to conclude Theorem \ref{thm:main} with a \emph{deterministic} $\ga > 0$ for all dimensions $d \ge 2$.

The tools of Sidoravicius-Stauffer percolation are no longer relevant in dimension~$1$, where the red process can never go around a blue seed. However, in this setting we can use a simpler proof idea, still based on a coupling of the SI process with different Poisson processes, where we look at block sizes of varying scales. This Poisson framework for the proof is related to the framework used in the study of supercritical MDLA in dimension $1$ in \cite{sly2016one}, though the details of the proofs are different.

\subsection{Prior work}

Two special cases of this model have received considerable attention.  The Frog Model, also known as the Stochastic Combustion process, is the case $D_A=0$ where susceptible particles do not move until they become infected.  This substantially simplifies the analysis since a particle is always infected at its initial location.  Consequently a great deal is know for the Frog Model including a shape theorem and large deviations~\cite{alves2002shape,ramirez2004asymptotic,comets2009fluctuations,berard2010large}.  

The other natural special case is $D_B=0$, also known as multi-particle Diffusion Limited Aggregation (MDLA).  Here the infection rule must be modified slightly since otherwise all infected particles would be frozen on the same vertex.  Instead, particles become infected just before they move onto an infected vertex and remain frozen in place there, along with any other susceptible vertices at the same location.  This is similar to the well known Diffusion Limited Aggregation model where particles enter one at a time from infinity and perform a random walk until they hit the aggregate. Instead, in MDLA, there is a sea of susceptible particles which move as random walks until they hit the aggregate and freeze.  Sidoravicius and Stauffer~\cite{sidoravicius2019multi} showed that in a variant where particles perform exclusion processes that there is linear growth with positive probability if the density is close enough to $1$ in dimension $2$ and higher. In the random walk version of the model, linear growth with probability $1$ was established in~\cite{sly2016one} whenever the density is at least $\frac56$.  The question of a shape theorem remains open.
An unpublished conjecture of Eldan predicts that there is no phase transition. That is, there is linear growth for all positive initial densities in dimension 2 and higher.  

In fact, a phase transition in MDLA is expected to occur only in dimension~1.  Kesten and Sidoravicius~\cite{kesten2008problem} showed that the diameter of the aggregate grows like order $t^{\frac12}$ when the initial density is less than 1 while~\cite{sly2016one} showed that the aggregate grows linearly when the initial density is greater than 1.  In the critical case Elboim, Nam and the second author~\cite{elboim2020critical} showed that the speed is $t^{\frac23}$ and the scaling limit is given by the integral of a Bessel$(8/3)$ process to the power of $-\frac23$.  A discontinuous scaling limit was found in a simplified \emph{frictionless} version of the model by Dembo and Tsai~\cite{dembo2019criticality}.

A natural question in any growth model is to establish a limiting shape theorem.  In the case of a simple model like first passage percolation this is established by a subadditivity argument~\cite{richardson1973random,cox1981some}.  This does not hold in the SI model. However, in the case $D_S = D_I$, Kesten and Sidoravicius~\cite{kesten2008shape} were able to establish an approximate superconvolutivity property, roughly showing that superadditivity happens on average. In doing so they established a shape theorem.  Their approach heavily uses monotonicity results that are not available when $D_S \neq D_I$, although we still expect them to be approximately true.

Our approach can be viewed as a synthesis of two main ideas.  The first is that revealing the particles and their trajectories corresponds to revealing part of a Poisson process on the space of random walk paths and that the information revealed is a stopping set which leaves the remainder of the Poisson process independent.  A simpler version of this approach was used to study MDLA~\cite{sly2016one,elboim2020critical}.  With this perspective, we are able to achieve a certain amount of spatial independence which plays a crucial role in our proof. 

The second main idea is to couple the block process with Sidoravicius--Stauffer Percolation for competing growth models.  
The application of SSP in~\cite{sidoravicius2019multi} required the case where the competing processes grow with almost equal speed (see also~\cite{finn2020non}). Our framework requires less from SSP. In particular, we can allow a large deterministic separation of the speeds of the competing growth processes. While our conditions are different from~\cite{sidoravicius2019multi} and so we reprove those results, the proofs in that part of the paper are essentially simplified  versions of theirs. In particular, the fact that the blue process grows deterministically means that we need only consider the randomness of the seed locations and not the passage times which substantially streamlines the argument.  

The combination of the two approaches also lends itself to proving linear growth with probability 1. Our local analysis of the infection process holds with higher probability as we take the block size to infinity. This, however, comes at the expense of a slower overall growth rate. By coupling to SSPs at different scales, we show how the SI process can get a good start with larger blocks and then move down to smaller blocks and a faster growth rate.

\subsection{Future work}

In a forthcoming paper, we analyze a version of the model where particles recover from the infection at some rate $\la$, after which point they become immune. The framework and technical tools introduced in this paper are crucial for understanding that more complicated model.

\subsection*{Acknowledgments}
The authors would like to thank Dimitris Cheliotis and Alexander Stauffer for useful discussions.  AS was supported by NSF grants DMS-1855527 and DMS-1749103, a Simons Investigator grant and a MacArthur Fellowship.

\subsection{Outline of the paper}

In Section \ref{S:SSP}, we develop the theory of Sidoravicius--Stauffer percolation in our setting. In Section~\ref{S:linear} we use this theory along with a Poisson block construction of the SI process and simple estimates about random walks to prove Theorem~\ref{thm:main} in dimensions $d \ge 2$. Two technical but intuitive points in Section~\ref{S:linear} are left for later sections: a rigorous justification of the construction of the SI process in terms of independent Poisson processes (addressed in Section~\ref{s:coupling}) and a check that the colouring process we define for the SI model yields an SSP (Section \ref{S:SI-to-BR}). Section~\ref{S:dim-1} proves Theorem \ref{thm:main} in the remaining simpler case of dimension one.

All arguments in the paper in the case when $d \ge 2$ are identical in all dimensions. Because of this, to simplify the notation and exposition we fix the dimension equal to $2$ in Sections \ref{S:SSP},  \ref{S:linear}, \ref{s:coupling}, and \ref{S:SI-to-BR}.
	
\section{Sidoravicius--Stauffer percolation}
	\label{S:SSP}

	In this section we develop a deterministic theory of competing red and blue growth processes known as Sidoravicius--Stauffer percolation (SSP). Informally, in SSP, a set of red vertices grows outwards from the origin in $\Z^2$ according to a set of edge weights, similarly to first passage percolation. Within $\Z^2$, there are a collection of blue seeds $\mathfrak{B}_*$ which cannot be invaded by the red process. Whenever the red process attempts to invade a blue seed, the blue proces becomes activated. Once activated, the blue process will also grow outward from a blue seed at a slow, constant speed. The red and blue processes can only invade squares that are not already part of one of the two coloured processes. 
	
	Within this framework, we also allow for squares adjacent to the blue process to become activated at arbitrary times. When these squares are activated, they will turn red and join the red process unless they belong to the set of blue seeds, in which case they will turn blue.
	
	\subsection{Constructing the process}
	
	We will define a pair of coupled growth processes $\fR, \fB:[0, \infty) \to \{S: S \sset \Z^2\}$. The growth of these processes will be governed according to weights on the set of directed edges
	$$
	E := \{(u, v) \in \Z^2 : \|u - v\|_1
	= 1 \}.
	$$ 
	We will need the following data to define $\fR, \fB$.
	\begin{itemize}
		\item A function $X_\fR:E \to [0, 1]$. We will think of $X_\fR$ as a collection of clocks which govern the growth of the red process.
		\item A collection of blue seeds $\fB_* \sset \Z^2$.
		\item Functions $X_\fB:E \to [0, \infty)$ and $\kappa:E \to (0, \infty)$ such that $X_\fB \le \ka$. We will think of $X_\fB$ as a collection of clocks which govern the growth of the blue process, and we call $\ka$ the \textbf{parameter} of the process. 
		
		Edges where  $X_\fB(e) = \ka(e)$ will encourage the spread of the blue process, whereas other edges will encourage the spread of the red process. When thinking about the proofs in this section, it may be helpful for the reader to think of $\ka$ as constant. There will be no extra nuance for the case of variable $\ka$.
		\item We have allowed for the caveat that clocks may have a value of $0$. This corresponds to instantaneous invasion. To ensure that the process is still well-defined even when instantaneous invasion is allowed, we require that there are no directed cycles $(u_0, u_1, \dots u_n = u_0)$ with
		$$
		X_\fR(u_{i-1}, u_{i}) \wedge X_\fB(u_{i-1}, u_{i}) = 0
		$$
		for all $i = 1, \dots, n$. 
	\end{itemize}
	We define $\fR$ and $\fB$ by recording a time $T(u)$ when each vertex $u$ gets added to the process, and a colour $C(u)$ for that vertex. We initialize the process by setting $T(0) = 0$, and setting $C(0) = \fR$ if $0 \notin B$ and $C(0) = \fB$ if $0 \in \fB_*$. The rules for assigning the other times and colours are as follows.

	For every edge $(u, v) \in E$, at time $T(u) + X_{C(u)}(u, v)$, the edge $(u, v)$ will ring, and the process will update accordingly. If $T(v) < T(u) + X_{C(u)}(u, v)$, then nothing happens. This corresponds to the case when $v$ has already been added to one of the processes by this time. Otherwise, we set $T(v) = T(u) + X_{C(u)}(u, v)$, and colour $v$ according the following four rules.
	\begin{enumerate}
		\item If $v \in \fB_*$, set $C(v) = \fB$.
		\item If $C(u) = \fR$ and $v \notin \fB_*$, then set $C(v) = \fR$.
		\item If $C(u) = \fB, v \notin \fB_*$, and $X_{\fB}(u, v) < \ka(u, v)$, then set $C(v) = \fR$.
		\item If $C(u) = \fB, v \notin \fB_*$ and $X_{\fB}(u, v) = \ka(u, v)$, then set $C(v) = \fB$.
	\end{enumerate}
	One caveat with this set of rules, is if $T(u) + X_{C(u)}(u, v) = T(u') + X_{C(u')}(u', v)$ for two different vertices $u, u'$. In this case, if the colours assigned to $v$ via the two edges $(u, v)$ and $(u, v')$ are different, we set $C(u) = \fB$. For $t \in [0, \infty)$, we set
	\begin{equation}
	\label{E:BR-def}
	\begin{split}
	\fR(t) &= \{u \in \Z^2 : T(u) \le t, C(u) = \fR\} \quad \text{and} \\
	\quad \fB(t) &= \{u \in \Z^2 : T(u) \le t, C(u) = \fB\}. 
	\end{split}
	\end{equation}
	When working with SSPs, we will always assume that for any $t$, only finitely many squares $u$ satisfy $T(u)  < t$.
	Under this \textbf{finite speed} assumption, the colouring process is well-defined. From now on, all processes we consider have finite speed.
	
	The above construction defines an SSP on $\Z^2$ driven by an initial red site at the origin at time $0$. We will also consider SSPs on certain connected subsets of vertices $V \sset \Z^2$, activated by an external red source at the \textbf{boundary} 
	$$
	\del V = \{u \in V : (u, v) \in E \text{ for some } v \notin V\}.
	$$
	We assume $\del V$ is finite.
	These processes are defined in the following way. Let $S:\del V \to [0, \infty)$ be any function such that $S(x) < \infty$ for at least one $x$. We call $S$ the \textbf{source function}.
	The evolution of the SSP on $V$ with source $S$ will be governed in the same way as an SSP on $\Z^2$ via a collection of blue seeds $\fB_* \sset V \smin \del V$ and weight functions $X_\fR, X_\fB$ with domain $V^2 \cap E$, with the following changes.
	
	Let $I \sset \del V$ be the set of vertices on which $S$ is minimal, and let $t$ denote this minimal value. We initialize the process at time $t$ by setting $T(u) = t$ and $C(u) = \fR$ for all $u \in I$. We run the process from this configuration, with the additional rule that if $u \in \del V$ and $S(u) \in (t, \infty)$, then we also set $T(u) = S(u)$ and $C(u) = \fR$ if $T(u)$ has not already been assigned to a value less than $S(u)$ by that time. 
	
	An SSP on $V \sset \Z^2$ can be used to describe the restriction of a global SSP to $V$, if we know that the blue process never attempts to invade $V$ from $V^c$. 
	
	Note that these definitions also make sense if the source function $S$ is defined on an arbitrary subset of $V \smin \fB_*$, rather than just a boundary. For example, later on it will be convenient to add the origin $0$ to the set on which $S$ is defined, since this site will initialize the red process.
	
	\subsection{An encapsulation theorem}
	\label{SS:encapsulate}
	
	The main goal of Section \ref{S:SSP} is to show that under certain conditions on an SSP, if $\ka$ is bounded below by a sufficiently large constant, then we can find a set $\mathfrak{C}$ depending only on the initial configuration of blue seeds such that $\fB(\infty) \sset \mathfrak{C}$. We will do this with a multiscale argument, and the set $\mathfrak{C}$ will be built in terms of these scales. Note that the restriction on $\ka$ is purely to facilitate the proof; a version of the same result holds for any $\ka > 1$.
	
	We first set up the series of required scales. Here and throughout the paper, various explicit constants appear. These have no meaning and are not chosen optimally -- we only work with explicit constants to help orient the reader with the large number of different scales that must be balanced.  
	
	Let $r_0 = 1 \le r_1 < r_2 < \dots$ be a sequence of integer scales satisfying
	\begin{equation}
	\label{E:gammabound}
	\ga := \prod_{i=0}^\infty \left(1 + \frac{10^{12} r_i}{r_{i+1}} \right) < 2,
	\end{equation}
	and such that $r_{i+1} = (i+1)^2 r_i$ for all large enough $i$.
	Condition \eqref{E:gammabound} implies that $10^{12} r_i \le r_{i+1}$ for all $i$ and the relationship between $r_i, r_{i+1}$ implies that there exists a constant $c > 0$ such that
	\begin{equation}
	\label{E:ri-relationship}
	c^{-1} \le \frac{r_n}{(n!)^2} \le c
	\end{equation}
	for all $n$. 
	To define the sets used to form $\fC$ we first set up some notation. For closed disks and annuli, we write
	$$
	D(x, r) = \{y \in \Z^2: |y - x| \le r \}, \qquad A(x, r, R) = \{y \in \Z^2 : r \le |y - x| \le R\},
	$$
	where $|x - y|$ denotes graph distance with the edge set $E$, or equivalently the $L^1$-distance. Here and throughout the paper, for two sets $A, B$, we use the notation
	$$
	d(A, B) = \inf \{d(x, y) : x \in A, y \in B\},
	$$
	for the infimal distance between two sets, and write $d(x,A):= d(\{x\}, A)$. We will also use the union-of-disks notation
	$$
	D(A, r) = \{y \in \Z^2: d(y, A) \le r \}= \bigcup_{x \in A} D(x, r).
	$$
	Now, for a set of blue seeds $\fB_\diamond \sset \Z^2$, define
	$$
	A_1(\fB_\diamond) := \{x \in \fB_\diamond: (\fB_\diamond \setminus \{x\}) \cap D(x, r_1/3) = \emptyset\},
	$$
	We then recursively define $\fB_{\diamond, k} = \fB_\diamond \smin \cup_{i=1}^{k-1} A_i(\fB_\diamond)$, and let 
	$$
	A_k(\fB_{\diamond}) := \{x \in \fB_{\diamond, k}: \fB_{\diamond, k} \cap A(x, r_{k-1},  r_k/3) = \emptyset \}.
	$$
	Finally, for a set $A \sset \Z^2$, we let $[A]$ be the union of $A$ and all bounded components of $A^c$. Our main theorem about SSPs is the following. 
	\begin{theorem}
		\label{T:BssetI}
		Consider an SSP $(\fR, \fB)$ on $\Z^2$ initiated from a set of blue seeds $\fB_*$ with parameter $\ka$ such that $\ka(e) > 4000$ for all $e \in E$. Suppose that:
		\begin{enumerate}
			\item There exists some connected set $V \sset \Z^2$ such that $\del V \sset \fR(\infty)$.
			\item Let $\fB_\diamond = \fB_* \cap V$. Then $\fB_\diamond = \bigcup_{k=1}^\infty A_k(\fB_\diamond)$.
			\item $[\fD] \sset V \smin (\del V \cup \{0\})$, where $\fD := \bigcup_{k=1}^\infty D(A_k(\fB_\diamond), r_k/100)$.
		\end{enumerate} 
		Then $\fB(\infty) \sset [\fC] \cup V^c$, where $\fC :=\bigcup_{k=1}^\infty D(A_k(\fB_\diamond), 100 r_{k-1})$.
	\end{theorem}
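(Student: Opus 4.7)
I would prove Theorem \ref{T:BssetI} by strong induction on the scale index $k$, establishing the following localized statement: for every $x \in A_k(\fB_\diamond)$, the blue cluster containing $x$ stays inside $D(x, 100 r_{k-1})$. Taking the union over $k$ and $x \in A_k(\fB_\diamond)$, and observing that the hypothesis $\del V \sset \fR(\infty)$ together with $[\fD] \sset V$ ensures that red eventually surrounds these small blue clusters (so any holes they leave behind are bounded components of $\fR(\infty)^c$), one obtains $\fB(\infty) \sset [\fC] \cup V^c$.

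For the base case $k=1$, a seed $x \in A_1(\fB_\diamond)$ is an isolated singleton with no other blue seed in $D(x, r_1/3)$. When red first reaches the vicinity of $x$ (guaranteed by hypothesis 1 together with hypothesis 3), blue activates at $x$, but every blue edge leaving $x$ carries delay $\ka \ge 4000$, while red edges have weight at most $1$. Red therefore sweeps a closed ring around $x$ at radius much smaller than $100$ long before blue takes even a single step, so blue stays at $\{x\} \sset D(x, 100 r_0)$.

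For the inductive step, fix $k \ge 2$ and $x \in A_k(\fB_\diamond)$. By definition of $A_k$, no seed of $\fB_{\diamond, k}$ lies in $A(x, r_{k-1}, r_k/3)$, and every seed in $D(x, r_k/3) \smin \{x\}$ has scale strictly less than $k$. By the inductive hypothesis, each such lower-scale seed $y \in A_j(\fB_\diamond) \cap D(x, r_k/3)$ has its blue cluster confined to a \emph{blockage} $D(y, 100 r_{j-1})$. The product bound \eqref{E:gammabound} gives that the total diameter of blockages intersected with $D(x, r_k)$ is at most $\ga r_k < 2 r_k$, and more importantly that inside any annulus $A(x, s, 2s)$ with $100 r_{k-1} \le s \le r_k/3$ the blockages occupy only a vanishingly small fraction of the width. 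Red, arriving from $\del V$ with edge weights $\le 1$, can therefore detour around the blockages along paths whose length exceeds the Euclidean distance by at most a constant factor, completing a closed red ring inside $A(x, 100 r_{k-1}, r_k/3)$ within time $\le C r_{k-1}$ of its first arrival at $\del D(x, r_k/3)$, for an absolute constant $C$. Meanwhile, blue from $x$ needs time at least $100 r_{k-1} \cdot \ka \ge 4 \cdot 10^5 \, r_{k-1}$ to reach this ring, so red seals off $x$ long before blue can escape. The same time budget also applies to blue emanating from the already-encapsulated lower-scale clusters inside $D(x, r_k/3)$, whose outer boundaries sit well inside $D(x, 100 r_{k-1})$ by the inductive hypothesis.

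The main obstacle is the \emph{geometric path control} in the inductive step: one must show that red can route around all lower-scale blockages with only a constant multiplicative detour overhead, uniformly across scales. This hinges on \eqref{E:gammabound}, which guarantees that blockages at every scale occupy a negligible share of the available annular width, so that one can always find red corridors of width proportional to $r_{k-1}$ through which first-passage percolation with weights in $[0,1]$ propagates at almost Euclidean speed. A secondary subtlety is decoupling simultaneous scale-$k$ activations at distinct $x, x' \in A_k(\fB_\diamond)$: the separation $|x-x'| \ge r_k/3$ together with the containment radius $100 r_{k-1} \ll r_k/3$ prevents geometric interaction, but one must still track the temporal order of activations to rule out a blue leak from one scale-$k$ encapsulation seeding another before its red ring has closed.
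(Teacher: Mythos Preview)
Your outline captures the right picture---multi-scale induction, with red encircling each blue seed before it escapes---but two genuine gaps prevent the argument from closing.

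\textbf{The inductive hypothesis is circular as stated.} You phrase it for the \emph{global} SSP: ``the blue cluster containing $x$ stays inside $D(x,100r_{k-1})$.'' But when you argue at scale $k$ and invoke the hypothesis for a nearby lower-scale seed $y\in A_j(\fB_\diamond)$, the blue cluster containing $y$ in the global process may already have merged with blue from $x$; it is then the same cluster you are trying to bound, and the hypothesis for $y$ is not available independently of the conclusion for $x$. The paper avoids this by formulating the inductive hypothesis not for the global process but for \emph{local} SSPs on connected components $C$ of a set $[\cK_{k-1}]$ built only from seeds of scale $<k$, with an \emph{arbitrary} red source on $\del C$. This universality over sources is what lets the inductive hypothesis survive the presence of the new scale-$k$ seed just outside $C$; the patching back to the global process is done separately (Lemma~\ref{L:boundary-patching}).

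\textbf{Blue can tunnel through lower-scale blue.} Your time estimate ``blue from $x$ needs time at least $100 r_{k-1}\cdot\ka$'' treats blue as moving at speed $1/\ka$ through virgin territory. In fact blue may reach a lower-scale blue cluster, which was activated earlier and whose interior costs blue no further $\ka$-delay to cross; the relevant distance is therefore not Euclidean but the metric $|\cdot|_{C_2+}$ that zeros out the lower-scale blue regions. The paper proves (Proposition~\ref{P:distance-bds}) that this metric is within a factor $2$ of Euclidean, and then in Lemma~\ref{L:higher-scales} runs a chain argument $s_0,q_1,s_1,q_2,\dots$ that tracks blue hopping from one lower-scale cluster to the next, using the inductive hypothesis (in its local-SSP form) to force a time drop each time blue enters the exterior of a lower-scale component. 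Your last paragraph flags both the geometric detour control for red and the ``secondary subtlety'' of tunneling, but neither is actually handled; Proposition~\ref{P:distance-bds} and the chain argument are the missing substance.

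A smaller point: the claim that every seed in $D(x,r_k/3)\smin\{x\}$ has scale $<k$ is false---there can be other scale-$k$ seeds in $D(x,r_{k-1})$. The paper absorbs all of $A_k(\fB_\diamond)$ in the relevant component into the single set $\fB_b\subset D(x,r_{k-1})$ and treats them together.
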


\begin{remark}
	It may be helpful for the reader to consider the simpler special case of Theorem \ref{T:BssetI} when $V = \Z^2$. In this case, the condition $1$ is trivial, $\fB_\diamond = \fB_*$, and condition $3$ amounts to the condition that $0 \notin [\fD]$. In applications, we require the stronger version above to deal with the possibility that our process behaves badly near the origin.
\end{remark}

See Figure \ref{fig:multiscale} for a heuristic for the proof of Theorem \ref{T:BssetI}.

\begin{figure}
	\begin{center}
			\includegraphics[width=\textwidth]{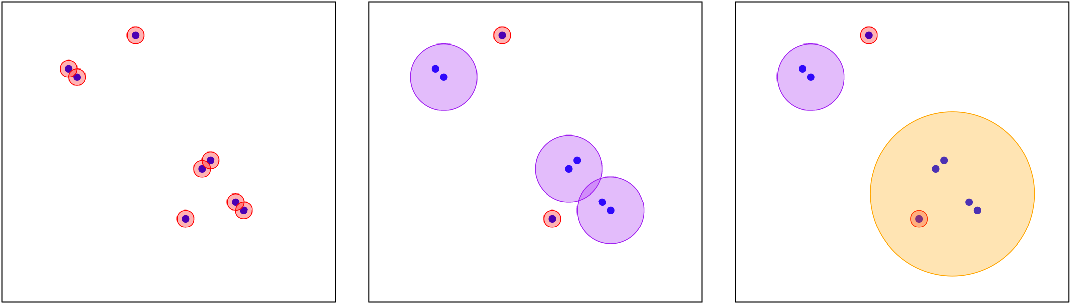}
		\caption{A heuristic for the multi-scale argument in Theorem \ref{T:BssetI}. Our goal is define to a region of $\Z^2$ that the red process will necessarily occupy. First draw a small ball around each blue seed, Panel A. If balls at this scale are isolated, then we can guarantee that the blue process will not spread beyond these balls. However, if balls overlap, then two blue seeds can work together to capture more area. Because of this we need to draw larger balls around these seeds at a higher scale, Panel B. Again, if balls at this higher scale overlap then we are forced to draw even larger balls, Panel C. In our setup, we are able to ignore the interaction between the orange ball and the red ball that was dealt with at an earlier scale. The precise details of our setup are slightly different than this heuristic, but the basic idea is the same.}
		\label{fig:multiscale}
	\end{center}

\end{figure}
	\subsection{Inputs for Theorem \ref{T:BssetI}}
	\label{S:inputs}
	
	The proof of Theorem \ref{T:BssetI} is based on a fairly involved inductive argument. In this section, we prove a collection of basic inputs that are required for the induction. The first input is a structural lemma about $\fC$ and $\fD$.
	We first define $k$-scale approximations
	\begin{equation*}
\fD_k := \bigcup_{i=1}^k D(A_i(\fB_\diamond), r_i/100) \qquad \text{ and } \qquad \fC_k := \bigcup_{i=1}^k D(A_i(\fB_\diamond), 100 r_{i-1}).
	\end{equation*}
	We will also need to work with a slight enlargement of $\fC$. Define
		\begin{equation*}
\fC^+ := \bigcup_{i=1}^\infty D(A_i(\fB_\diamond), 300 r_{i-1}),
	\end{equation*}
	and similarly define $\fC^+_k$. We will use throughout that $\fC_k \sset \fC_k^+ \sset \fD_k$, so when bounding the sizes of any of these sets, it will always suffice to bound $\fD_k$.
	\begin{lemma}
		\label{L:IJlemma}
		For every $k$, all components of $\fD_k, [\fD_k]$ have diameter at most $r_k$.
	\end{lemma}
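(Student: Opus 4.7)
The plan is to prove the lemma by induction on $k$, handling $\fD_k$ first and then bootstrapping to $[\fD_k]$.

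For the base case $k = 1$, the defining condition of $A_1(\fB_\diamond)$ forces distinct seeds to be separated by more than $r_1/3$, which is bigger than $2r_1/100$. Hence the balls $D(x, r_1/100)$ are pairwise disjoint, each $\fD_1$-component is a single ball of diameter less than $r_1$, and since a single ball in $\Z^2$ has connected complement, $[\fD_1] = \fD_1$.

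For the inductive step, I would first extract a clustering property of $A_k$. For $x \in A_k$, the defining condition $\fB_{\diamond,k} \cap A(x, r_{k-1}, r_k/3) = \emptyset$ together with $A_k \sset \fB_{\diamond,k}$ forces every $y \in A_k \setminus \{x\}$ to satisfy either $|x - y| \le r_{k-1}$ or $|x - y| > r_k/3$. Declare two points of $A_k$ equivalent if their $r_k/100$-balls overlap (centers within $r_k/50$). Iterating the dichotomy along a chain $x_1, y_1, \ldots, y_n$ of consecutive neighbors, induction on the chain length shows $|x_1 - y_i| \le r_{k-1}$ for each $i$ (the chain cannot drift past $r_k/3$ before being pulled back, because $r_{k-1} + r_k/50 < r_k/3$). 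So each equivalence class of $A_k$ has diameter at most $r_{k-1}$.

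I then bound components of $\fD_k$. A component $C$ either contains no $A_k$-ball -- in which case it is an old $\fD_{k-1}$-component of diameter at most $r_{k-1}$ by induction -- or it contains exactly one $A_k$-cluster $\cC$ (distinct clusters are more than $r_k/3 - r_k/50$ apart, much further than any attached $\fD_{k-1}$-component of diameter $\le r_{k-1}$ can bridge). Every $\fD_{k-1}$-component attached to $C$ overlaps some ball of $D(\cC, r_k/100)$ and has diameter $\le r_{k-1}$, so
\[
C \sset D(\cC, r_k/100 + r_{k-1}),
\]
giving $\operatorname{diam}(C) \le 3r_{k-1} + r_k/50$, which is much smaller than $r_k$ by \eqref{E:gammabound}.

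The main obstacle is the bound for $[\fD_k]$: in principle, the $[\cdot]$ operation could merge distant $\fD_k$-components by filling in a bounded region of $\fD_k^c$ whose boundary touches several components. The strategy is to strengthen the inductive hypothesis to say that every component of $[\fD_k]$ is contained in a ball of radius $r_k/2$, and to show that each bounded component $U$ of $\fD_k^c$ is confined to a single such ball. I would argue this by noting that any closed curve enclosing $U$ in a planar thickening of $\fD_k$ must lie in a single connected component of that thickening: at the level of $\R^2$-thickenings, disjoint $\fD_k$-components are strictly separated, and a Jordan curve in $\fD_k$ enclosing $U$ is a connected continuum, hence lies in a single component $C$; therefore $U \sset [C]$ and the merged component is just $[C]$, whose diameter equals $\operatorname{diam}(C) \le r_k$. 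The delicate part is executing this planarity argument cleanly in the $\Z^2$ setting, possibly by passing to the usual square thickening $\bigcup_{v \in \fD_k}[v - (1/2,1/2), v+(1/2,1/2)]$ and invoking the Jordan curve theorem.
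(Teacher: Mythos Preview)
Your inductive argument for $\fD_k$ is correct and matches the paper's proof closely: the same clustering of $A_k$ into groups of diameter $\le r_{k-1}$, the same observation that a $\fD_k$-component meets at most one such cluster (via a $\fD_{k-1}$-component of diameter $\le r_{k-1}$), and the same final bound $r_k/50 + 3r_{k-1}$.

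The divergence is at $[\fD_k]$. The paper disposes of this in one sentence --- ``filling in holes cannot increase the diameter of a set'' --- and thereafter works only with $\fD_k$. You are right that this reduction is not entirely automatic: the operation $A \mapsto [A]$ can merge several $4$-components of $A$ into one component of $[A]$, and when that happens the merged component need not equal $[C]$ for any single $A$-component $C$. So the concern you flag is real, and some planarity input is genuinely needed to justify the paper's shortcut.

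Your Jordan-curve plan is the right idea, but watch the connectivity conventions when you pass to the square thickening. With \emph{closed} unit squares, diagonally adjacent cells of $\fD_k$ share a corner, so distinct $4$-components of $\fD_k$ can fuse in the thickening, and the enclosing curve you find may only lie in a single $8$-component of $\fD_k$; with \emph{open} squares the components stay separated but the thickening may fail to enclose $U$. The clean formulation is the standard $4$/$8$ duality on $\Z^2$: a bounded $4$-component $U$ of $\fD_k^c$ has an $8$-connected outer boundary in $\fD_k$, hence $U \subset [C]$ for a single $8$-component $C$ of $\fD_k$. Since the inductive diameter argument (both yours and the paper's) uses only $L^1$-metric separation and not the precise adjacency notion, it goes through verbatim for $8$-components, and that closes the gap. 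With this observation in hand you do not need to strengthen the inductive hypothesis.
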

	
	\begin{proof}
	Filling in holes cannot increase the diameter of a set, so it suffices to prove the lemma for $\fD_k$. We prove this by induction on $k$. The definition of $A_1(\fB_\diamond)$ implies that all components of $\fD_1$ are disks of radius $r_1/100$, which have diameter bounded above by $r_1$. Now suppose that the claim holds for $\fD_{k-1}$. Let $C$ be a connected component of $\fD_k$. We can write 
	$$
	C = C_k \cup C_{k-1},
	$$
	where $C_k \sset D(A_k(\fB_\diamond), r_k/100)$ and $C_{k-1} \sset \fD_{k-1}$. If $C_k$ has at least two distinct connected components, then there must be a connected component $C_{k-1}'$ of $C_{k-1}$, and connected components $C_{k, 1}, C_{k, 2} \sset C_k$ such that
	$$
	C_{k, 1} \cup C_{k, 2} \cup C_{k-1}'
	$$
	is connected. In particular, there must be points $x_1, x_2 \in A_k(\fB_\diamond)$ such that 
	$$
	d(x_1, C_{k-1}'), d(x_2, C_{k-1}') \le r_k/100 + 1. 
	$$
	Since $C_{k-1}'$ has diameter at most $r_{k-1}$ by the inductive hypothesis, this implies
	$$
	|x_1 - x_2| \le r_k/50 + 2 + r_{k-1}.
	$$
	On the other hand, $|x_1 - x_2| \ge r_k/50,$ since otherwise $D(x_1, r_k/100)$ and $D(x_2, r_k/100)$ would overlap and hence so would $C_{k, 1}, C_{k, 2}$. Therefore
	$$
	x_2 \in A(x_1, r_k/50, r_k/50 + 2 + r_{k-1}).
	$$
	The annulus above is contained in $A(x_1, r_{k-1}, r_k/3)$ for all $k$ since $10^{12} r_{k-1} \le r_k$ for all $k$, and $r_0 = 1$. This contradicts that both $x_1, x_2 \in A_k(\fB_\diamond)$. 
	
	Therefore $C_k$ consists of a single connected component. By the construction of $A_k(\fB_\diamond)$, we than have
	$$
	C_k = D(B^*, r_k/100),
	$$
	where $B^*$ is a cluster of blue seeds satisfying $\text{diam}(B^*) \le r_{k-1}$. Therefore 
	$$
	\text{diam}(C_k) \le r_k/50 + r_{k-1}.
	$$
	Attaching $C_k$ to connected components of $C_{k-1}$ can only increase the diameter by $2 r_{k-1}$ by the inductive hypothesis, so $
	\text{diam}(C) \le r_k/50 + 3r_{k-1} \le r_k,
	$
	as desired.
	\end{proof}

A key part of the inductive analysis involves showing that the red and blue processes cannot be slowed down or sped up too much by the presence of lower-scale blue clusters. We handle this issue by bounding lengths of paths that are forced to avoid $[\fC_k^+]$, and by bounding lengths of paths that can pass through components of  $[\fC_k^+]$ at no cost.

For a subset $G \sset \Z^2$, let 
$$
|y - z|_G
$$
be the graph distance between $y$ and $z$ in $G$, where the edge set is induced from the edge set $E$ on $\Z^2$. Also let
$$
|y - z|_{G+}
$$
be the weighted graph distance between two points in $\Z^2$, where all edges in $E$ have weight $1$, except for edges with at least one vertex in $G$, which get weight $0$. In the remainder of this section, we build up to the following proposition.

\begin{prop}
	\label{P:distance-bds}
	Define
	$$
	\ga_k = \prod_{i=1}^k \left(1 + \frac{10^{12} r_{i-1}}{r_{i}} \right),
	$$
	For every $k \ge 1$, for every $x, y \in [\fD_k]^c$ we have
	$$
	\ga_k^{-1}|x-y| \le |x-y|_{[\fC^+_k]+} \le |x-y|_{[\fC^+_k]^c} \le \ga_k |x-y|.
	$$
\end{prop}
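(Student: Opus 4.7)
I plan to prove the three inequalities by a joint induction on $k \ge 1$. The middle inequality is immediate: any path in $[\fC^+_k]^c$ uses only edges disjoint from $[\fC^+_k]$, so its ordinary length equals its $+$-length, and the $+$-geodesic ranges over all of $\Z^2$ (a larger class of paths).

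The first step is a structural lemma refining Lemma \ref{L:IJlemma}: each connected component $C$ of $[\fC^+_k]$ has a \emph{scale} $i^*(C) \in \{1, \ldots, k\}$, namely the largest $i$ such that $C$ contains a disk $D(x, 300 r_{i-1})$ with $x \in A_i(\fB_\diamond)$. By the same induction as Lemma \ref{L:IJlemma}, I would show (i) $\text{diam}(C) \le O(r_{i^*(C) - 1})$, since $A_{i^*}$-points lying in a single cluster are within $r_{i^*-1}$ of each other and merged lower-scale components (of diameter $O(r_{i^*-1})$ by inductive hypothesis) each add at most $O(r_{i^*-1})$; and (ii) distinct components of the same scale $j$ are pairwise at distance at least $r_j/4$, using the $r_j/3$ separation of distinct $A_j$-clusters.

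For the right inequality $|x-y|_{[\fC^+_k]^c} \le \ga_k |x-y|$, I induct on $k$. In the base case $k = 1$, the straight-line $\Z^2$-geodesic crosses at most $O(|x-y|/r_1) + 1$ disks in $D(A_1, 300)$, each detour costs $O(r_0)$, and this fits inside $10^{12} r_0/r_1 \cdot |x-y|$. In the inductive step, the hypothesis at scale $k-1$ supplies a path $\pi \sset [\fC^+_{k-1}]^c$ of length at most $\ga_{k-1}|x-y|$, which I reroute around each scale-$k$ component of $[\fC^+_k]$ it crosses. By the structural lemma, there are at most $O(|\pi|/r_k)$ such components, each of diameter $O(r_{k-1})$. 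The hypothesis $x, y \in [\fD_k]^c$ is essential here: because $\fD_k$ has radius $r_k/100$ around $A_k$-points while $\fC_k^+$ has radius $300 r_{k-1} \ll r_k/100$, there is an annular buffer of width $\gg r_{k-1}$ around each scale-$k$ component inside $[\fC^+_k]^c$, giving room to detour without hitting other components. The total length is then at most $\ga_{k-1}(1 + 10^{12} r_{k-1}/r_k)|x-y| = \ga_k |x-y|$.

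For the left inequality $|x-y| \le \ga_k |x-y|_{[\fC^+_k]+}$, the $+$-geodesic $\pi$ of length $L$ visits components $C_1, \ldots, C_m$ of $[\fC^+_k]$ in order, making free Euclidean jumps of size $\le \text{diam}(C_i) \le O(r_{i^*(C_i) - 1})$ across each. Grouping visits by scale, the $r_j/3$ separation of $A_j$-clusters yields $V_j := \#\{i : i^*(C_i) = j\} \le O(L/r_j) + 1$. The $+1$ is absorbed by the observation that if $V_j \ge 1$ then, since $[\fD_k] \supset D(A_j, r_j/100)$, the endpoints $x, y \notin D(A_j, r_j/100)$ force $\pi$ to traverse at least $L \ge r_j/200$ of non-free edges, so $O(r_{j-1}) \le O(L \cdot r_{j-1}/r_j)$. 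Summing, $|x-y| \le L + O(L \sum_{j=1}^k r_{j-1}/r_j) \le \ga_k L$, using the inequality $\ga_k - 1 \ge \sum_j 10^{12} r_{j-1}/r_j$ obtained by expanding the product. The main obstacle will be rigorously verifying that the detours in the right-inequality induction remain within $[\fC^+_k]^c$ and do not crash into detached lower-scale components — this is precisely where the $[\fD_k]^c$ buffer is deployed.
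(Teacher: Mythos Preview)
Your treatment of the right (upper) inequality follows the paper's line: induct on $k$, and at the inductive step reroute a $[\fC^+_{k-1}]^c$-geodesic around the newly added scale-$k$ clusters. The paper packages this in Lemmas~\ref{L:distance-bound'}--\ref{L:separating-set}, culminating in Lemma~\ref{L:distance-bound}, but the geometric idea is the same, and you correctly identify the key technical point (the detour must also avoid detached lower-scale components sitting in the buffer annulus; this is exactly where Lemma~\ref{L:separating-set} combined with the inductive hypothesis is used).

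Your treatment of the left (lower) inequality, however, has a genuine gap. You claim the $r_j$-scale separation of $A_j$-clusters gives $V_j \le O(L/r_j) + 1$, where $L$ is the $+$-length of the geodesic $\pi$. But separation only controls the \emph{Euclidean} length of the portion of $\pi$ between two consecutive scale-$j$ visits; that portion may include free jumps through intermediate components of other (possibly higher) scales, so its contribution to $L$ can be far smaller than $r_j/4$. The same conflation appears in your absorption of the ``$+1$'': the condition $x,y \notin D(A_j, r_j/100)$ forces the Euclidean length of $\pi$ to exceed $r_j/200$, not its $+$-length $L$. In both places $L$ should be replaced by the true edge-length $T$ of $\pi$, and then $T \le L + \sum_j V_j \cdot O(r_{j-1})$ makes the bound self-referential. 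This circularity can be broken (since $\sum_j r_{j-1}/r_j$ is small one can solve for $T$), but that step is missing from your outline and the constant-tracking to land inside $\ga_k$ needs care. The paper avoids the issue entirely by proving the lower bound inductively too: the second half of Lemma~\ref{L:distance-bound'} (fed through Lemma~\ref{L:distance-bound}) shows that adjoining the single new scale $k$ decreases $|\cdot|_{+}$ by at most a factor $(1 - O(r_{k-1}/r_k))$, and the product over $k$ gives $\ga_k^{-1}$.
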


In the statement above, recall that $[G]$ is equal to $G$ with its holes filled in.

The proof of Proposition \ref{P:distance-bds} relies on an inductive argument. Building to get the inductive step will require several lemmas. For these lemmas, we assume that we have the following setup. First, for a set $A \sset \Z^2$, define its \textbf{outer boundary}
$$
\del^o A := \del A^c.
$$
We can think of the usual boundary $\del A$ as an \emph{inner} boundary.
	\begin{enumerate}
	\item We have sets $C \sset D \sset \Z^2$ and $C' \sset D' \sset \Z^2$ with $\del^o D' \cap D = \emptyset$. Moreover, these sets have simply connected components.
	\item For some $r \in \N$, all components of $D \cup D'$ have diameter at most $r$.
	\item For some $r' > 1200r$, for any $x, y$ in distinct components of $D'$ we have $|x - y| > r'$.
	\item For all $x, y \notin D$, we have
	$$
	\frac{1}{2} |x - y| \le |x-y|_{C+} \le |x - y|_{C^c} \le 2|x - y|.
	$$
	\item For all $x, y$ in the same connected component of $\del^o D'$, we have
	$$
	|x - y|_{[C \cup C']^c} \le 10 |x - y|_{C^c}.
	$$
\end{enumerate}

\begin{lemma}
	\label{L:distance-bound'}
	Suppose that we are in the setup above, and that additionally, $D'$ consists of a single component.
	Then for any  $x, y \in [D(D', m) \cup D]^c$, we have
	$$
|x-y|_{[C \cup C']^c} \le \lf(1 + \frac{60 r}m \rg)|x - y|_{C^c}.
$$
	Similarly, for all $x,y \in [D(D', m) \cup D]^c$ we have
	$$
	|x-y|_{[C \cup C']+} \ge \lf(1 - \frac{3r}m \rg)|x - y|_{C+}.
	$$
\end{lemma}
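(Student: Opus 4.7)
The key structural observation driving both bounds is that since $C' \sset D'$ has a single connected component, there is a unique component $K^\star$ of $[C \cup C']$ that meets $C'$. By assumption (2) applied to $D \cup D' \supset C \cup C'$, we have $\text{diam}(K^\star) \le r$, so $K^\star \sset D(D', r)$. Using that the components of $C \sset D$ are simply connected (assumption (1)), all other components of $[C \cup C']$ turn out to be components of $C$ not adjacent to $C'$, so any path in $C^c$ automatically avoids them. The proofs of both inequalities reduce to estimating the cost of a single detour around $K^\star$ built from the outer boundary $\partial^o D'$ via assumption (5).

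For the upper bound, let $\gamma$ be a geodesic from $x$ to $y$ in $C^c$ of length $L = |x-y|_{C^c}$. If $\gamma$ does not intersect $K^\star$, then $\gamma \sset [C \cup C']^c$ by the structural observation and we are done. Otherwise, let $u$ be the first entry and $v$ the last exit of $\gamma$ into $K^\star$. I would replace $\gamma[u, v]$ by a four-part detour: (a) step from $u$ to an adjacent vertex $u^- \in [C \cup C']^c$; (b) walk in $[C \cup C']^c$ from $u^-$ to a nearby point $u^\star$ on a fixed component of $\partial^o D'$ surrounding $K^\star$; (c) walk along that component of $\partial^o D'$ to a point $v^\star$ adjacent to $v$; and (d) step to $v^+ \in [C \cup C']^c$ adjacent to $v$. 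Assumption (4) controls the connector legs, while assumption (5) combined with $\text{diam}(D') \le r$ and assumption (4) bounds the middle leg by at most $20r$, so the total detour has length $O(r)$. Since $\gamma$ enters $K^\star \sset D(D', r)$ from $x \notin D(D', m)$, we have $L \ge |x-u|_{C^c} \ge (m-r)/3$ by assumption (4) (after a small adjustment to handle the possibility that $u$ lies in $D$, using the diameter bound $r$). Therefore the extra length is at most $O(r)/(m-r) \cdot L \le 60 r L / m$ for $m$ sufficiently larger than $r$.

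For the lower bound, let $\gamma^\star$ be a geodesic in $[C \cup C']+$ of length $\ell^\star$. Viewed as a path in $C+$, its cost becomes $\ell^\star + E$, where $E$ is the number of edges of $\gamma^\star$ with an endpoint in $[C \cup C'] \smin C \sset K^\star \sset D(D', r)$. By a shortcutting argument — replacing any portion of $\gamma^\star$ inside $K^\star$ by a single pass via a detour of the same shape as (a)--(d) above (which does not increase $\ell^\star$) — I can assume that $E = O(r)$. Since reaching $K^\star$ from $x$ requires $C+$-length at least $(m-r)/3$ by assumption (4), so also $\ell^\star \ge (m-r)/3$ (otherwise $\gamma^\star$ makes no bonus edges at all), we obtain $E \le (3r/m)|x-y|_{C+}$. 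Rearranging $|x-y|_{C+} \le \ell^\star + E$ yields the claimed lower bound.

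The main obstacle is the shortcutting step in the lower bound: a priori, the geodesic $\gamma^\star$ is free to wander inside $K^\star$ using $0$-cost edges and accumulate many bonus edges. The resolution is that such wandering can be excised without decreasing $\ell^\star$, leaving us with a single pass through $K^\star$ of length $O(r)$. A secondary technical point is verifying that $\partial^o D'$ has a well-defined connected component surrounding $K^\star$ on which assumption (5) can be applied, which follows from simple-connectedness of $D'$ together with $\partial^o D' \cap D = \emptyset$.
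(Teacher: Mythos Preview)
Your structural claim that there is a \emph{unique} component $K^\star$ of $[C\cup C']$ meeting $C'$ is false: take $C=D=\emptyset$, $D'=[-5,5]\times\{0\}$, and $C'=\{(-5,0),(5,0)\}$; all five setup conditions hold, yet $[C\cup C']=C'$ has two components, both meeting $C'$. This breaks both halves of your argument. In the upper bound, your detour legs (b) and (d) need to connect $u^-,v^+$ to $\partial^o D'$ inside $[C\cup C']^c$, but with several $K_i^\star$ clustered in $D(D',r)$ there is no clean bound on those legs. In the lower bound the problem is worse: your ``shortcutting'' step asserts that replacing the portion of $\gamma^\star$ inside $K^\star$ by a detour ``of the same shape as (a)--(d)'' does not increase $\ell^\star$, but that detour runs along $\partial^o D'\subset [C\cup C']^c$ and so has strictly positive $[C\cup C']+$-cost, whereas the portion it replaces had cost zero. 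If instead you shortcut \emph{through} $K^\star$, you keep $\ell^\star$ fixed and bound $E$ by $\operatorname{diam}(K^\star)\le r$---but only for that one component, and the geodesic may visit many $K_i^\star$'s, each contributing $O(r)$ to $E$.

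The paper avoids all of this by never looking at components of $[C\cup C']$. It simply takes $a,b\in\partial^o D'$ to be points on the geodesic itself (first and last crossings of $\partial^o D'$), using that $\partial^o D'\cap D=\emptyset$ so $a,b\notin D$ and assumption~4 applies directly. For the upper bound, the initial and final segments $G(x,a),G(b,y)$ already lie in $[C\cup C']^c$ (they avoid $D'$ and are in $C^c$), so there are no connector legs at all; the middle is bounded by assumption~5 alone, giving the clean additive $60r$. For the lower bound, one just drops the middle segment to get $|x-y|_{[C\cup C']+}\ge |x-a|_{C+}+|b-y|_{C+}=:Z$, bounds $|x-y|_{C+}\le Z+|a-b|\le Z+r+2$ by the triangle inequality, and uses $Z\ge m$. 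No shortcutting, no component analysis. Your route can be repaired by redefining $u,v$ to be these crossing points of $\partial^o D'$, at which point it collapses to the paper's argument.
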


\begin{proof}
	For the first implication, let $x, y \in [D(D', m) \cup D]^c$ and let $G(x, y)$ be a $|\cdot|_{C^c}$-geodesic from $x$ to $y$. If $G(x, y) \cap D' = \emptyset$, then $|x-y|_{[C \cup C']^c} = |x-y|_{C^c}$, and the claim follows. Also, if
	$$
	2|x - y| \le d(x, D') + d(D', y), 
	$$
	then any path from $x$ to $y$ that goes through $D'$ is at least length $2|x - y|$ which is at least $|x-y|_{C^c}$ by assumption $4$. Hence $G(x, y) \cap D' = \emptyset$ in this case as well.
	
	It just remains to prove the lemma when
	\begin{equation}
	\label{E:2xyxy}
	2|x - y| \ge d(x, D') + d(D', y)
	\end{equation}
	and $G(x, y) \cap D' \ne \emptyset$. We can find $a \ne b \in \del^o D' \cap G(x, y)$ such that $G(x, a) \cap D'=\emptyset, G(b, y) \cap D' = \emptyset$, where $G(x, a), G(b, y)$ are the segments of $G(x, y)$ from $x$ to $a$ and $b$ to $y$.  Now, by the triangle inequality we have that
	\begin{align}
	\label{E:xab1}
	|x -y|_{[C \cup C']^c} &\le |x-a|_{C^c} + |a-b|_{[C \cup C']^c} + |b- y|_{C^c}.
	\end{align}
	Next, we bound $|a-b|_{[C \cup C']^c}$. First, since $a, b \in \del^o D'$, neither point lies in $D$ by assumption $1$. Therefore we can apply the bound in assumption $4$. Combining this with the bound in assumption $5$ gives that
	$$
	|a-b|_{[C \cup C']^c} \le 20 |a-b|.
	$$
	By assumption $2$, this is bounded above by $20 (r+2) \le 60 r$, so \eqref{E:xab1} is bounded above by $|x - a|_{C^c} + |b - y|_{C^c} + 60r$, which is in turn bounded above by
	\begin{align}
	\label{E:3xyxy}
	&|x-y|_{C^c} + 60r.
	\end{align}
	For this bound, we have used that
	$$
	|x - y|_{C^c} = |x- a|_{C^c} + |a-b|_{C^c} + |b - y|_{C^c} \ge |x- a|_{C^c} + |b - y|_{C^c}. 
	$$ 
	Now, \eqref{E:2xyxy} and the fact that $x, y \notin D(D', m)$ implies that $|x -y|_{C^c} \ge m$. Combining this with \eqref{E:3xyxy} yields the bound.
	
	For the second implication,	let $G^+(x, y)$ be a $|\cdot|_{[C \cup C']+}$-geodesic from $x$ to $y$. Again, the bound is straightforward if $G^+(x, y) \cap D' = \emptyset$, and if not we can find $a \ne b \in \del^o D' \cap G^+(x, y)$ such that $G^+(x, a) \cap D'=\emptyset, G^+(b, y) \cap D' = \emptyset$. Therefore
	\begin{align*}
	|x-y|_{[C \cup C']+} & \ge |x-a|_{C+} +|b - y|_{C+}
	\end{align*}
	Now, on the other hand the triangle inequality implies
	\begin{align*}
|x-y|_{C+} &\le |x-a|_{C+} +|a-b|_{C+} + |b - y|_{C+} \\
&\le |x-a|_{C+} +|a-b| + |b - y|_{C+} \\
&\le |x-a|_{C+} + r + 2 + |b - y|_{C+},
	\end{align*}
	so setting $Z =  |x-a|_{C+} +|b - y|_{C+}$, the ratio of $|x-y|_{[C \cup C']+}$ and $|x-y|_{C+}$ is at least $Z/(Z + r + 2)$. Finally,  $Z \ge m$ since $x, y \notin D(D', m)$, so $Z/(Z + r + 2) \ge m/(m + r + 2) \ge 1 - (3r)/m$, yielding the bound.
\end{proof}

\begin{lemma}
\label{L:d-bd-2}
We work in the same setup as in Lemma \ref{L:distance-bound'}, except without the additional assumption that $D'$ consists of a single component. Let $\hat D$ be any component of $D'$, and let $x, y \in A(\hat D, r'/10, r'/5) \cap D^c$. Then
\begin{equation}
\label{E:1-10-r}
\lf(1 - \frac{30 r}{r'} \rg)|x - y|_{C+} \le |x-y|_{[C \cup C']+} \le |x-y|_{[C \cup C']^c} \le \lf(1 + \frac{600r}{r'} \rg)|x - y|_{C^c}.
\end{equation}
Also, if $\tilde D$ is any component of $D$ not entirely contained in $D(D', r'/10)$, and $x, y \in \del^o \tilde D$, then
$$
|x - y|_{C+} = |x-y|_{[C \cup C']+}, \qquad |x-y|_{[C \cup C']^c} = |x - y|_{C^c}.
$$
\end{lemma}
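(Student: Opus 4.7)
My plan is to prove both parts by exploiting the large separation of components of $D'$ (assumption 3 plus $r' > 1200r$) to reduce to the single-component setting of Lemma~\ref{L:distance-bound'}. For part (1), I would first apply Lemma~\ref{L:distance-bound'} with $D'$ replaced by the single component $\hat D$, $C'$ replaced by $\hat C' := C' \cap \hat D$, and $m = r'/10$. The hypotheses transfer because $\del^o \hat D \sset \del^o D'$: any other component lies at distance $> r'$ from $\hat D$ and so cannot touch $\del^o \hat D$, hence assumption 5 restricted to $\hat D$ follows from the global assumption 5. This yields the ratio bounds with $\hat C'$ in place of $C'$.

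To upgrade from $[C \cup \hat C']$ to $[C \cup C']$, I would use a Euclidean-extent argument. The $|\cdot|_{C^c}$-geodesic $G$ between $x, y \in A(\hat D, r'/10, r'/5)$ has length at most $2|x-y|$ by assumption 4, and a triangle-inequality argument shows that any vertex $p$ on $G$ satisfies $|x-p| \le (|x-y| + |G|_{C^c})/2 \le 3|x-y|/2 \le 3r'/5 + 3r/2$. The detoured path $G'$ from Lemma~\ref{L:distance-bound'} replaces the unique $\hat D$-excursion with a path along $\del^o \hat D$, which lies within $r'/5 + r + 1$ of $x$, so $G'$ has extent from $x$ at most $3r'/5 + 3r/2 < 4r'/5$ for $r' > 1200r$. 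Since other components of $D'$ are at distance $> 4r'/5$ from $x$, neither $G$ nor $G'$ can reach them, so $G'$ is a valid $|\cdot|_{[C \cup C']^c}$-path. The $+$-direction is handled analogously, with additional bookkeeping: if a $|\cdot|_{[C \cup C']+}$-geodesic visits multiple components, each cross-component transit contributes at least $(r'-2)/2$ to the $[C\cup C']+$-length, bounding the number of visits, and the excursion corrections $|a_i - b_i|_{C+} \le 2(r+2)$ are absorbed into the $1 - 30r/r'$ ratio via triangle-inequality chaining.

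For part (2), the hypothesis $\tilde D \not\sset D(D', r'/10)$ with $\mathrm{diam}(\tilde D) \le r$ gives $d(\tilde D, D') \ge r'/10 - r > r'/11$. For $x, y \in \del^o \tilde D$ we have $|x-y| \le r + 2$ and hence $|x-y|_{C^c} \le 2(r+2) \ll r'/11$ by assumption 4; any $|\cdot|_{C^c}$-geodesic therefore has Euclidean extent at most $2(r+2)$ and is disjoint from $D' \supset C'$, giving the $c$-equality. For the $+$-equality, the trivial direction $|x-y|_{[C \cup C']+} \le |x-y|_{C+}$ holds, and for the reverse I would take a $|\cdot|_{[C\cup C']+}$-geodesic $G^+$ of length at most $|x-y|_{C+} \le 2(r+2)$ and derive a contradiction from $G^+$ entering $D'$: the vertex $a$ just before its first $D'$-entry lies in $\del^o D'$, which by assumption 1 is disjoint from $D$, so assumption 4 applies and the sub-path from $x$ to $a$ (which avoids $C' \sset D'$) has $[C \cup C']+$-length equal to its $C+$-length, at least $|x-a|/2 > r'/22 - 1$. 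For $r' > 1200r$ this far exceeds the length budget $2(r+2)$, so $G^+$ never enters $D'$ and its $[C \cup C']+$-length equals its $C+$-length.

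The principal obstacle will be the $+$-direction of part (1): since $|\cdot|_{[C \cup C']+}$-geodesics can use free $C$-edges to wander arbitrarily, their Euclidean extent is not directly controllable as in the $c$-case, so the ratio bound must come from a more delicate accounting of the excursion structure combined with the between-component separation, rather than a one-step extent argument.
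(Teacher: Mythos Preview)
Your approach matches the paper's: apply Lemma~\ref{L:distance-bound'} with $\hat C' := C' \cap \hat D$ and $m = r'/10$, then use the component separation to upgrade from $\hat C'$ to $C'$; part~(2) is handled essentially identically. For the $+$-direction of part~(1)---which you correctly flag as the crux---the paper replaces your excursion accounting with a direct contradiction: if a $[C \cup C']+$-geodesic entered $D' \setminus \hat D$ at first and last outer-boundary points $z, z'$ (both outside $D(\hat D, r'/10) \cup D$ by assumptions~1 and~3), then Lemma~\ref{L:distance-bound'} and assumption~4 applied to the legs $x \to z$ and $z' \to y$ give $|x-y|_{[C\cup C']+} \ge \tfrac{1}{3}(|x-z|+|z'-y|) \ge 8r'/15 > |x-y|$, so no such visit occurs and the geodesic is already a $[C \cup \hat C']+$-geodesic, whence Lemma~\ref{L:distance-bound'} gives the ratio directly.
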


\begin{proof}
Let $x, y \in A(\hat D, r'/10, r'/5) \cap D^c$. Let $\hat C = C' \cap \hat D$ be the part of $C'$ in $\hat D$. By Lemma \ref{L:distance-bound'} with $m = r'/10$, we have
\begin{equation}
\label{E:frac30r}
\lf(1 - \frac{30 r}{r'} \rg)|x - y|_{C+} \le |x-y|_{[C \cup \hat C]+} \le |x-y|_{[C \cup \hat C]^c} \le \lf(1 + \frac{10(60r)}{r'} \rg)|x - y|_{C^c}.
\end{equation}
Now, the assumptions on $r, r'$ ensure that both of the $(1 \pm \cdot)$-prefactors on the left and right above are in the interval $[2/3, 3/2]$. Combined with assumption 4 on $|\cdot|_{C+},|\cdot|_{C^c}$, we get that
$$
\frac{1}{3} |x - y| \le |x - y|_{[C \cup \hat C]+} \le |x-y|_{[C \cup \hat C]^c} \le 3|x - y|.
$$
Now, $d(x, \hat D), d(y, \hat D) \le r'/5$ and $\hat D$ has diameter less than $r$, so again the relationship between $r$ and $r'$ ensures that 
$$
|x-y| \le r'/2.
$$
On the other hand, since the components of $D'$ are spaced at least distance $r'$ apart, any path $P$ from $x$ to $y$ that enters another component of $D'$ must have length at least $8r'/5$. Since $|x-y|_{[C \cup \hat C]^c} \le 3|x - y| \le 3r'/2$, this implies that any $[C \cup \hat C]^c$-geodesic from $x$ to $y$ does not enter any other component of $D'$, and hence is also a $[C \cup C']^c$-geodesic. This proves the first inequality in \eqref{E:1-10-r}.

For the final inequality in \eqref{E:1-10-r}, we check that any $[C \cup C']+$-geodesic from $x$ to $y$ avoids $D' \smin \hat D$. Suppose that this is not the case. Then there must be points $z, z'$ in the outer boundary of $D' \smin \hat D$ such that 
$$
|x-y|_{[C \cup C']+} \ge |x - z|_{[C \cup \hat C]+} + |z' -y|_{[C \cup \hat C]+}.
$$
By assumption $1$ and assumption $3$, we have that $z, z' \notin D \cup D(\hat D, r'/10)$. Therefore by another application of Lemma \ref{L:distance-bound'} as in \eqref{E:frac30r} and assumption $4$ on $|\cdot|_{C+}$, the right hand side above is bounded below by
$$
\frac{2}{3}\lf(|x - z|_{C+} + |z' -y|_{C+}\rg) \ge \frac{1}{3}\lf(|x - z| + |z' -y|\rg) \ge 8r'/15.
$$
On the other hand, $|x-y|_{[C \cup C']+} \le |x -y| < 8r'/15$, which is a contradiction. Therefore any $[C \cup C']+$-geodesic from $x$ to $y$ avoids $D' \smin \hat D$, completing the proof of \eqref{E:1-10-r}.

The `Also' statement is similar. Since $\tilde D$ is not entirely contained in $D(D', r'/10)$ and has diameter at most $r$, for $x, y \in \del^o \tilde D$, we have $x, y \notin D(D', r'/20)$. Moreover, by assumption $4$, we have $|x-y|_{C^c} \le 2|x-y| \le 2(r+2) \le 6r$. Therefore any $|\cdot|_{C^c}$-geodesic from $x$ to $y$ will be too short to reach a point in $D'$, so $|x-y|_{C^c} = |x-y|_{[C \cup C']^c}$. Also, by assumption $4$, any path from $x$ to $y$ that reaches a point in $D'$ must have $(C \cup C')+$-length at least
$$
\frac{1}{2}\lf(d(x, D') + d(y, D') \rg) \ge r'/20 \ge |x - y|,
$$
so  $|x-y|_{C+} = |x-y|_{[C \cup C']+}$.
\end{proof}

\begin{lemma}
	\label{L:distance-bound}
	In the setup of Lemma \ref{L:d-bd-2}, for any $x, y \notin D(D', r'/10) \cup D$, we have
\begin{equation}
\label{E:frac10r}
\lf(1 - \frac{30 r}{r'} \rg)|x - y|_{C+} \le |x-y|_{[C \cup C']+} \le |x-y|_{[C \cup C']^c} \le \lf(1 + \frac{600 r}{r'} \rg)|x - y|_{C^c}.
\end{equation}
\end{lemma}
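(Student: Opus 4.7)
The plan is to extend Lemma \ref{L:d-bd-2} from the single-component setting to arbitrary $x, y \notin D(D', r'/10) \cup D$ by splitting an appropriate geodesic into excursions near distinct components of $D'$ and applying the single-component bound at each one. The key observation is that the balls $D(\hat D_i, r'/10)$ are pairwise disjoint (since components of $D'$ are at distance $> r'$ apart), so excursions into different balls occupy disjoint time intervals along the geodesic, and the per-component detour costs therefore aggregate additively into a single multiplicative factor of $(1 + 600r/r')$ (respectively $(1 - 30r/r')$) rather than compound.

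For the upper bound, I would start with a $|\cdot|_{C^c}$-geodesic $G$ from $x$ to $y$ and preprocess it using the ``Also'' part of Lemma \ref{L:d-bd-2}: whenever $G$ enters a component $\tilde D$ of $D$ that is not entirely contained in any $D(\hat D_i, r'/10)$, replace the excursion of $G$ into $\tilde D$ with a $[C \cup C']^c$-path between the entry and exit points on $\del^o \tilde D$ of the same $C^c$-length. The resulting path $G'$ still has $|G'|_{C^c} = |x-y|_{C^c}$, and any point of $G'$ lying in $D$ now sits inside some $D(\hat D_i, r'/10)$. For each excursion of $G'$ into a ball $D(\hat D_{j(k)}, r'/10)$, with entry and exit times $\sigma_k, \tau_k$, set $a_k = G'(\sigma_k - 1)$ and $b_k = G'(\tau_k + 1)$. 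These points lie in $A(\hat D_{j(k)}, r'/10, r'/5) \cap D^c$: they are adjacent to points inside $D(\hat D_{j(k)}, r'/10)$ but outside it themselves, so they fall in the annulus, and the $r'$-spacing of components of $D'$ rules out $a_k, b_k \in D(\hat D_i, r'/10)$ for $i \ne j(k)$, so by the preprocessing they are in $D^c$.

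Applying Lemma \ref{L:d-bd-2} at each excursion gives $|a_k - b_k|_{[C \cup C']^c} \le (1 + 600r/r') |a_k - b_k|_{C^c}$. Since excursions along $G'$ occupy disjoint time intervals, $\sum_k |a_k - b_k|_{C^c} \le |G'|_{C^c} = |x-y|_{C^c}$. Concatenating the detours with the untouched portions of $G'$ outside $\bigcup_i D(\hat D_i, r'/10)$ (which avoid $C'$, since $C' \subset D'$) then produces a $[C \cup C']^c$-path from $x$ to $y$ of length at most $(1 + 600r/r')|x-y|_{C^c}$. The lower bound follows from a symmetric argument applied to a $|\cdot|_{[C \cup C']+}$-geodesic $G^+$, identifying excursions and entry/exit points in the same way and using the lower bound in \eqref{E:1-10-r} in place of the upper bound. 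The main obstacle is the bookkeeping that keeps the per-component detour costs additive rather than multiplicative; this hinges both on the disjointness of excursions along the geodesic and on the preprocessing step, which is what guarantees that the entry/exit points lie in $D^c$ so that Lemma \ref{L:d-bd-2} is actually applicable.
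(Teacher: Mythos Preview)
Your overall strategy---split a geodesic into excursions and apply Lemma~\ref{L:d-bd-2} locally---matches the paper's, but your two-stage organization (preprocess $D$-excursions first, then handle excursions into the balls $D(\hat D_i, r'/10)$) has a gap in the preprocessing step. When you replace an excursion of $G$ through a component $\tilde D$ of $D$ by a $[C\cup C']^c$-path between the entry and exit points on $\del^o \tilde D$, that replacement path is a $C^c$-geodesic that avoids $C'$, but nothing prevents it from entering $D \setminus C$ again, possibly in a different component of $D$. So your assertion that ``any point of $G'$ lying in $D$ now sits inside some $D(\hat D_i, r'/10)$'' is not justified, and consequently the entry/exit points $a_k, b_k$ may lie in $D$, in which case the hypothesis $x, y \in A(\hat D, r'/10, r'/5) \cap D^c$ of Lemma~\ref{L:d-bd-2} fails.

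The paper avoids this by taking excursions into components $A$ of the \emph{combined} set $D(D', r'/10) \cup D$ directly. For such a component, either $A$ meets $D(D', r'/10)$, in which case $A \subset D(\hat D, r'/5)$ for a single $\hat D$ and the first part of Lemma~\ref{L:d-bd-2} gives the bound on $\del^o A$; or $A$ is a component of $D$ disjoint from $D(D', r'/10)$, and the ``Also'' part gives equality on $\del^o A$. In either case the entry/exit points are automatically in $(D(D', r'/10) \cup D)^c$, so the hypotheses of Lemma~\ref{L:d-bd-2} are met without any preprocessing. Your argument becomes correct if you replace the two-stage preprocessing by this single decomposition into components of $D(D', r'/10) \cup D$.
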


\begin{proof}
First let $A$ be a component of $D(D', r'/10) \cup D$. We first show that \eqref{E:frac10r} holds for $x, y \in \del^o A$. First suppose that $A \cap D(D', r'/10) \ne \emptyset$. Since all components of $D$ have radius at most $r$, and components of $D'$ are separated by distance $r'$, the set $A$ is also a component of $D(\hat D, r'/10) \cup D$ for some component $\hat D$ of $D'$ and $A$ is contained in $D(\hat D, r'/5)$. Therefore by the first part of Lemma \ref{L:d-bd-2}, all bounds in \eqref{E:frac10r} hold for $x, y \in \del^o A$. If $A \cap D(D', r'/10) = \emptyset$, then $A$ is a component of $D$ that does not intersect $D(D', r'/10)$, so \eqref{E:frac10r} holds by the second part of Lemma \ref{L:d-bd-2}.

Now consider arbitrary points $x, y \notin D(D', r'/10) \cup D$, and let $G(x, y)$ be a $|\cdot|_{C^c}$-geodesic from $x$ to $y$. For points $a, b \in G(x, y)$, we write $G(a, b)$ for the segment of $G(x, y)$ from $a$ to $b$.
 If $G(x, y) \cap (D(D', r'/10) \cup D) = \emptyset$, then $|x-y|_{(C \cup C')^c} = |x - y|_{C^c}$ and the upper bound in \eqref{E:frac10r} is immediate. If not, then there exists $k \in \N$ and points $a_1, b_1,\dots, a_k, b_k \in \del^o (D(D', r'/10) \cup D) \cap G(x, y)$ such that 
	\begin{enumerate}[label=(\roman*)]
		\item $G(x, a_1) \cap [D(D', r'/10) \cup D]=\emptyset, G(b_k, y) \cap [D(D', r'/10) \cup D] = \emptyset,$
		\item $G(b_i, a_{i+1}) \cap [D(D', r'/10) \cup D] =\emptyset$ for all $i$,
		\item $a_i, b_i$ are in the boundary of same component of $[D(D', r'/10) \cup D]$.
	\end{enumerate}
	Now by the triangle inequality,
	\begin{align*}
	|x -&y|_{[C \cup C']^c} \\&
	\le |x-a_1|_{[C \cup C']^c} + \sum_{i=1}^k|a_i-b_i|_{[C \cup C']^c} + \sum_{i=1}^{k-1} |b_i-a_{i+1}|_{[C \cup C']^c} + |b_k- y|_{[C \cup C']^c}\\
	&= |x-a_1|_{C^c} + \sum_{i=1}^k|a_i-b_i|_{[C \cup C']^c} + \sum_{i=1}^{k-1} |b_i-a_{i+1}|_{C^c} + |b_k- y|_{C^c} \\
	&\le |x-a_1|_{C^c} + \sum_{i=1}^k\lf(1 + \frac{600r}{r'}\rg)|a_i-b_i|_{C^c} + \sum_{i=1}^{k-1} |b_i-a_{i+1}|_{C^c} + |b_k- y|_{C^c} \\
	&\le \lf(1 + \frac{600r}{r'}\rg)|x - y|_{C^c},
	\end{align*}
	where in the fourth line, we have used the bound \eqref{E:frac10r} for points in the outer boundary of the same component of $D(D', r'/10) \cup D$, and in the final line we have used the geodesic property 
	$$
	|x - y|_{C^c} = |x-a_1|_{C^c} + \sum_{i=1}^k|a_i-b_i|_{C^c} + \sum_{i=1}^{k-1} |b_i-a_{i+1}|_{C^c} + |b_k- y|_{C^c}
	$$ 
	For the second implication, let $G^+(x, y)$ be a $|\cdot|_{(C\cup C')+}$-geodesic from $x$ to $y$.  Again, the bound is straightforward if $G^+(x, y) \cap (D(D', r'/10) \cup D) = \emptyset$, and if not we can find $a_1, b_1,\dots, a_k, b_k$ satisfying conditions (i)-(iii) above. Therefore similarly
	\begin{align*}
	|x-y|_{[C\cup C']+} &= |x-a_1| + \sum_{i=1}^k|a_i-b_i|_{[C\cup C']+} + \sum_{i=1}^{k-1} |b_i-a_{i+1}| + |b_k- y| \\
	&\ge |x-a_1| + \sum_{i=1}^k(1 - 30r/r')|a_i-b_i| + \sum_{i=1}^{k-1} |b_i-a_{i+1}| + |b_k- y| \\
	&\ge (1-30r/r')|x -y|. \qedhere
	\end{align*}
\end{proof}

The next lemma and corollary is concerned with finding a set $D'$ to satisfy the above conditions given sets $C, D,$ and $C'$. We will also use this next lemma later on to establish containment of blue processes.
\begin{lemma}
\label{L:separating-set}
Let $C \sset D \sset \Z^2, x \in \Z^2$ and $r \ge 1$. Suppose that all components of $D$ have diameter at most $r$, and that for any $y, z \in D^c$ we have
\begin{equation}
\label{E:11ga-bd}
|y - z|_{C^c} \le 2|y - z|.
\end{equation}
Let $A$ be the connected component of $D(x, 98 r) \cup D$ containing $x$. Then $\del^o A \sset D(x, 100 r)$ and for any $y, z \in \del^oA$ we have 
$$
|y-z|_{D(x, 200 r) \smin [C \cup D(x, 4r)]} \le 10|y-z|.
$$
\end{lemma}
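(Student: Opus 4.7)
The first claim $\partial^o A \sset D(x, 100r)$ follows from a direct containment argument. The set $A$ equals $D(x, 98r)$ together with precisely those components $K$ of $D$ that intersect or are adjacent to $D(x, 98r)$ (any other component of $D$ is separated from $D(x, 98r)$ inside $D(x, 98r) \cup D$); any such $K$ has a vertex at distance $\le 98r + 1$ from $x$ and diameter $\le r$, so lies in $D(x, 99r + 1)$. Hence $A \sset D(x, 99r + 1)$, giving $\partial^o A \sset D(x, 100r)$.

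For the distance bound, let $y, z \in \partial^o A$. First observe that $y, z \in D^c$: if $y \in D$, then the component of $D$ containing $y$ would also contain the neighbor of $y$ in $A$ (they are adjacent and both in $D$), forcing $y \in A$, contradicting $y \in \partial^o A$. Combined with $D(x, 98r) \sset A \sset D(x, 99r+1)$, this gives $98r+1 \le |x - y|, |x - z| \le 100r$. The hypothesis then provides a $C^c$-geodesic $P$ from $y$ to $z$ of length $\le 2|y-z|$, and the geodesic property gives $|y - w| \le 2|y-z|$ for every $w \in P$. When $|y-z| \le 47r$, the path $P$ automatically stays at distance $> 4r$ from $x$ (avoiding $D(x, 4r)$) and within distance $< 200r$ of $x$ (lying inside $D(x, 200r)$), so $P \sset D(x, 200r) \smin (C \cup D(x, 4r))$ with length at most $2|y-z| \le 10|y-z|$. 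When $|y-z| > 47r$, the direct geodesic may enter $D(x, 4r)$ or leave $D(x, 200r)$, so I construct a detour by chaining applications of the hypothesis through intermediate waypoints $w_i \in D^c$ routed around $D(x, 4r)$ inside $D(x, 200r)$ with consecutive waypoints at distance at most, say, $5r$; each local geodesic then stays in a small ball around $w_i$, and the total detour adds at most $O(r)$ to the overall length, which is absorbed by the generous factor of $10$ since $|y-z| > 47r$.

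Finally, I must verify that the constructed path avoids the fill-ins $[C \cup D(x, 4r)] \smin (C \cup D(x, 4r))$, not merely $C \cup D(x, 4r)$. The key topological observation is that every bounded component $B$ of $(C \cup D(x, 4r))^c$ is small and close to $D(x, 4r)$: if $B$'s boundary lies entirely in $C$, then $B$ is a bounded component of $C^c$, but the hypothesis forces $D^c$ (and hence $y$) to lie in the unbounded component of $C^c$, so no such $B$ contains $y$; if $B$'s boundary involves $D(x, 4r)$, then it is enclosed by $D(x, 4r)$ together with a ``fence'' of $C$-components, but since $C$-components are pairwise disjoint with diameter $\le r$, such a fence cannot extend more than distance $r$ from $D(x, 4r)$ (otherwise the $C^c$-gaps between consecutive disjoint components provide escape routes from $B$), so $B \sset D(x, 5r)$. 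In either case, $y$ lies in the unbounded component of $(C \cup D(x, 4r))^c$, and any path in this complement joining $y$ to $z$ stays in the unbounded component, avoiding all fill-ins. The main obstacle of the proof is this topological fill-in argument; the rest reduces to case analysis and explicit distance estimates from the hypothesis.
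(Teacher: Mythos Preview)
Your overall structure matches the paper's: handle small $|y-z|$ by showing the $C^c$-geodesic automatically avoids $D(x,4r)$ and stays in $D(x,200r)$, then handle large $|y-z|$ by chaining short pieces and absorbing the slack in the constant $10$. Your threshold $47r$ works just as well as the paper's $94r$. The paper's large case is cleaner, however: it takes only five waypoints $z_0=y,\dots,z_5=z$ in $\partial^oA$ (hence automatically in $D^c$ with $|x-z_i|>98r$), each consecutive pair at distance $<94r$, giving total length $\le 5\cdot 2\cdot 94r\le 10|y-z|$. Your version with spacing $5r$ and an ``$O(r)$ detour'' is vaguer and the $O(r)$ claim is not justified as stated; you should either carry out that arithmetic or adopt the coarser waypoint strategy.

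You are right to flag the fill-in issue---the paper does gloss over why the $C^c$-geodesic lies in $[C\cup D(x,4r)]^c$ rather than merely $(C\cup D(x,4r))^c$---but your proposed fix has a genuine gap. In case (2) you assert that a ``fence'' of disjoint $C$-components cannot extend more than distance $r$ from $D(x,4r)$ because ``the $C^c$-gaps between consecutive disjoint components provide escape routes.'' This is false without invoking the hypothesis: in $\Z^2$ with $4$-connectivity, diagonally adjacent single-point components (e.g.\ $\{(0,0),(1,1),(2,0),(1,-1)\}$) enclose $(1,0)$ with no $4$-connected escape route. What actually rules this out is the bound $|y-z|_{C^c}\le 2|y-z|$, which your argument in (2) never uses. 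A correct fix: starting from $y\in D^c$ with $|x-y|>98r$, inductively pick $w_k\in D^c$ with $|w_k-w_{k-1}|\le 40r$ and $|x-w_k|\to\infty$; each $C^c$-geodesic between consecutive $w_k$ has length $\le 80r$ and hence stays at distance $>98r-80r>4r$ from $x$, so the concatenation is a path to infinity in $(C\cup D(x,4r))^c$. This shows $y$ lies in the unbounded component, which is what you need.
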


\begin{proof}
The fact that $\del^o A \sset D(x, 100 r)$ is immediate since components of $D$ have diameter at most $r$.	
For $y, z \in \del^o A$, let $G(y, z)$ denote a $|\cdot|_{C^c}$-geodesic from $y$ to $z$. By \eqref{E:11ga-bd}, as long as $|y-z| < 94 r$, then $|y-z|_{C^c} < 188 r$, and so the geodesic $G(y, z)$ cannot enter $D(x, 4r)$ and will never exit $D(x, 200r)$. Therefore
$$
|y-z|_{D(x, 200 r) \smin [C \cup D(x, 4r)]} = 	|y-z|_{C^c} \le 2 |y-z|.
$$
Now suppose $|y-z| >94r$ for $y, z \in A$. Since $A$ is connected and contained in the annulus $A(x, 98r, 100r)$, for any two points $y, z \in A$, we can find points $z_0 = y, z_1, z_2, \dots, z_5 = z  \in \del^o A$ such that $|z_i - z_{i+1}| < 94r$. Hence by the triangle inequality,
\[
|y-z|_{D(x, 200 r) \smin [C \cup D(x, 4r)]} \le \sum_{i=1}^5 |z_{i-1} - z_i|_{C^c} \le 10\cdot94 r \le 10 |y-z|. \qedhere
\]
\end{proof}

\begin{proof}[Proof of Proposition \ref{P:distance-bds}]
We proceed by induction on $k$ for $k = 0, 1, \dots$, where $\fC^+_0 = \fD_0 = \emptyset$, and $\ga_0 = 1$. With these definitions, the $k=0$ base case is trivially true. Now suppose that the bounds hold for $k-1$.

First, we can find a set of points $\{b_i : i \in N\} \sset A_k(\fB_\diamond)$, where $N$ is a finite or countable index set such that
$$
|b_i - b_j| > r_k/3
$$
for all $i, j \in N$, and 
$$
A_k(\fB_\diamond) \sset \bigcup_{i \in N} D(b_i, r_{k-1}).
$$
This decomposition shows that points in $A_k(\fB_\diamond)$ can be grouped into clusters around each of the points $b_i$. 
We can apply Lemma \ref{L:separating-set} with $x=b_i, r = 100r_{k-1}$ and $C = [\fC^+_{k-1}], D = [\fD_{k-1}]$ for each $i$. The hypotheses of that lemma are satisfied by combining the inductive hypothesis with Lemma \ref{L:IJlemma}. Therefore letting $A_i$ be the connected component of $[\fD_{k-1}]\cup D(b_i, 98(100 r_{k-1}))$,for any $x, y\in \del^o A_i$ we have
\begin{equation}
\label{E:xyxyx}
|x - y|_{D(b_i, 200(100 r_i)) \smin [[\fC^+_{k-1}]\cup D(b_i, 4\cdot 100r_{k-1})]} \le 10 |x-y|.
\end{equation}
Now, since $|b_i - b_j| > r_k/3 > 400(100 r_{k-1})$ for $i \ne j$, we have
$$
D(b_i, 200(100 r_{k-1})) \cap D(A_k(\fB_\diamond), 300 r_{k-1}) \sset  D(b_i, 4\cdot 100r_{k-1}).
$$
Therefore for $x, y \in \del^o A_i$, we have
$$
|x - y|_{[D(A_k(\fB_\diamond), 300 r_{k-1}) \cup [\fC^+_{k-1}]]^c} \le |x - y|_{D(b_i, 200(100 r_i)) \smin [[\fC^+_{k-1}] \cup D(b_i, 4\cdot 100r_{k-1})]}.
$$
Therefore by \eqref{E:xyxyx},
\begin{equation}
\label{E:XIk}
|x - y|_{[D(A_k(\fB_\diamond), 300 r_{k-1}) \cup [\fC^+_{k-1}]]^c} \le 10 |x-y| \le 10 |x-y|_{[\fC^+_{k-1}]^c}.
\end{equation}
We claim that we are now in the setup introduced prior to Lemma \ref{L:distance-bound'}, with
\begin{itemize}
	\item $C = [\fC^+_{k-1}], D = [\fD_{k-1}]$, $C'= D(A_k(\fB_\diamond), 300 r_{k-1})$ and $D' = \bigcup_{i \in N} A_i$,
	\item $r = 2\cdot 99\cdot 100 r_{k-1}$,
	\item $r' = r_k/200$.
\end{itemize}
Indeed, condition 1 is immediate from the construction. Condition 2 follows since all the $A_i$ have diameter at most $2 \cdot 99 \cdot 100 r_{k-1}$ and are disjoint.
The fact that $r_k \ge 10^{12} r_{k-1}$ ensures that $d(A_i, A_j) \ge r'$ for $i \ne j$, guaranteeing condition 3. The inductive hypothesis and the bound \eqref{E:gammabound} on $\ga$ ensures Condition 4. Condition 5 follows from \eqref{E:XIk}. Therefore by Lemma \ref{L:distance-bound} and the inductive hypothesis, for any $x, y \notin D(D', r'/10) \cup [\fD_{k-1}]$ we have
\begin{align*}
&\lf(1 - \frac{10 \cdot 2\cdot 99\cdot 100 r_{k-1}}{r_k/200} \rg)\ga_{k-1}^{-1}|x - y| \le |x-y|_{[C \cup C']+}, \\
&|x-y|_{[C \cup C']^c} \le \lf(1 + \frac{600 \cdot 2\cdot 99\cdot 100 r_{k-1}}{r_k/200} \rg)\ga_{k-1}|x - y|
\end{align*}
The prefactors in the upper and lower bounds above are bounded above and below by $\ga_k$ and $\ga_k^{-1}$. Moreover, it is easy to check that $D(D', r'/10) \cup [\fD_{k-1}] \sset [\fD_k]$ and $[\fC^+_k] = [C \cup C']$, so the proposition follows. 
\end{proof}

\subsection{Bounding blue growth}

To bound the blue growth in SSPs, we will divide our space into SSPs on finite subsets of $\Z^2$ on which we will be able to apply the bounds of Proposition~\ref{P:distance-bds}. We start with a lemma that will allow us to patch together different finite processes to get a bound on the growth of the entire process.

\begin{lemma}
\label{L:boundary-patching} Fix $\ka:E \to (0, \infty)$ and a collection of blue seeds $\fB_*$. Suppose that we can partition the set $\fB_*$ into clusters $B_1, B_2, \dots$ such that for each $i$, there exist finite sets $C_i, D_i$ with $B_i \sset C_i \sset D_i$ and:
\begin{itemize}
	\item The sets $D_i$ are all disjoint, and $\del^o C_i \sset D_i$ for all $i$.
	\item For an SSP $(\fR_i, \fB_i)$ on $D_i$ with collection of blue seeds $B_i$, parameter $\ka$, source function defined on $\del D_i$,  and any edge weights, we have 
	$$
	\fB_i(\infty) \sset C_i.
	$$
\end{itemize}
Consider a set $M$ with $\del M \cap D = \emptyset$, where $D := \bigcup D_i$. Then for any SSP $(\fR, \fB)$ on $M$ with parameter $\ka$ and arbitrary edge weights and source function defined on $\del M$, we have $\fB(\infty) \sset C := \bigcup C_i$. More generally, any SSP on all of $M$ started from a source function defined on a subset of $M\smin D$ with parameter $\ka$ satisfies $\fB(\infty) \sset C$.

\end{lemma}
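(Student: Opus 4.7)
The plan is to argue by contradiction using the local SSP hypothesis. Assume $\fB(\infty) \not\sset C$, and using the finite speed assumption, set $t^* = \min\{T(v) : C(v) = \fB,\, v \notin C\}$, choosing some minimizer $v_0$. The key geometric fact I will use is that $\del D_i \cap C = \emptyset$ for every $i$: for $j \ne i$ this follows from disjointness of the $D_j$, since $C_j \sset D_j$; and if some $v \in C_i \cap \del D_i$ had a neighbour $w \notin D_i$, then $w \notin C_i$, forcing $w \in \del^o C_i \sset D_i$, a contradiction. In particular, every blue vertex coloured at a time strictly less than $t^*$ lies in $C$ and hence outside every $\del D_j$.

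Since $\fB_* \sset C$, the vertex $v_0$ is not a seed, so it becomes blue at time $t^*$ via rule $4$ through an edge $(u, v_0)$ with $u$ blue and $T(u) \le t^*$. Minimality of $t^*$ forces $u \in C_i$ for some $i$, and then $v_0 \in \del^o C_i \sset D_i$, so $v_0 \in D_i \smin C_i$. Because the source of the global SSP lies in $M \smin D$ (either by the hypothesis $\del M \cap D = \emptyset$ or directly in the more general statement), no initial colouring occurs inside $D$, so the chain of ringings that coloured $v_0$ must enter $D_i$ through $\del D_i$; in particular, at least one vertex of $\del D_i$ is coloured in the global process by time $t^*$.

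To derive a contradiction, I would couple the global process inside $D_i$ with a local SSP $(\fR', \fB')$ on $D_i$ with seeds $B_i$, parameter $\ka$, edge weights inherited from the global process, and source function $S$ equal to the restriction of $T$ to the set of vertices in $\del D_i$ that are coloured in the global process. The preceding paragraph guarantees that $S$ takes at least one finite value, and the geometric step forces every such coloured boundary vertex to be red, so $S$ is a valid source producing exactly the red colourings expected. I would then prove by induction on the ordered sequence of events up to time $t^*$ that the local and global SSPs produce identical times and colours on $D_i$: each internal ringing in $D_i$ matches because the weights, rules, and current colourings agree by the inductive hypothesis; and each external ringing from outside $D_i$ into some $w \in \del D_i$ is replicated exactly by $S$ colouring $w$ red at time $S(w) = T(w)$. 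Applying the local SSP hypothesis then forces $v_0 \in \fB'(\infty) \sset C_i$, contradicting $v_0 \notin C_i$ and establishing $\fB(\infty) \sset C$.

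The main obstacle is the inductive verification that the local and global processes agree on $D_i$ up to $t^*$, in particular handling simultaneous events together with the tie-break rule that favours blue. The geometric step is what makes this tractable: it rules out any blue colouring of $\del D_i$ strictly before $t^*$, so external ringings into $\del D_i$ always produce red in the global process, matching exactly what the source function $S$ injects into the local process, and any simultaneous internal blue ringing at a boundary vertex is resolved by the same tie-break rule on both sides.
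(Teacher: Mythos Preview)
Your argument is correct and follows the same route as the paper: argue by contradiction at the first time blue escapes $C$, place the offending vertex in some $D_i$, and invoke the local SSP hypothesis there to obtain a contradiction. Your explicit geometric step $\del D_i \cap C = \emptyset$ is precisely what justifies treating the restriction to $D_i$ as a local SSP with red source on $\del D_i$; the paper carries out the same idea more tersely by introducing a second time $t'$ (the first escape from $D$), showing $t<t'$, and asserting that on $[0,t')$ no blue can invade $D_i$ from outside.
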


\begin{proof}
	The two claims have identical proofs, so we handle them together.
	Suppose that the lemma fails. Let $t > 0$ be the first time when $\fB(t) \not\sset C$ and let $t' \in [t, \infty)$ be the first time when $\fB(t) \not\sset D$; we take $t' =\infty$ if no such time occurs. Let $Q = \fB(t) \smin C$.
No square in $Q$ can be a blue seed. Thus since $\ka > 0$ pointwise, every square in $Q$ must be adjacent to a square in $C$ and hence $\fB(t) = Q \cup C \sset D$ by the first bullet. Therefore $t < t'$.

Now let $s \in Q$, and let $D_i$ be the component of $D$ that $s$ is contained in.  On the interval $[0, t')$, the SSP $(\fR, \fB)$ restricted to $D_i$ is simply an SSP $(\fR_i, \fB_i)$ on $D_i$ with a source defined on $\del D_i$ and blue seed set $B_i$. This is because no part of the blue process can invade from $D_i^c$ at times in $[0, t')$, since this would imply that $\fB(r)$ is not contained in $D$ for some $r < t'$. However, the second bullet guarantees that $s \notin \fB_i(r)$ for all $r < t'$. Since $t < t'$, this is a contradiction.
\end{proof}

The next lemma provides the main inductive step in the proof of Theorem \ref{T:BssetI}.
\begin{lemma}
\label{L:higher-scales} Let $\ka:E \to (4000, \infty)$. All SSPs in this lemma have the same parameter $\ka$. Fix $x \in \Z^2$ and $r \in \N$. Suppose that we can partition the set of blue seeds $\fB_* = \fB_a \cup \fB_b$, where $\fB_a$ and $\fB_b$ have the following properties:
	
	\begin{enumerate}
		\item There are sets $C_1 \sset C_2 \sset C_3$ with $\fB_a \sset C_1 \sset C_2 \sset C_3$.
		\item All connected components of $C_3$ have diameter $\le r$.
		\item $\fB_b \sset D(x, r)$.
		\item $\del^o C_1 \sset C_2$.
		\item For $x, y \in C_3^c$, we have
		\begin{equation}
		\label{E:1ga-bd}
		\frac{1}{2}|x-y| \le |x-y|_{C_2+} \le |x - y|_{C_1^c} \le 2|x - y|.
		\end{equation}
		\item For any connected component $C$ of $C_2$, and any SSP $(\fR_C, \fB_C)$ on $C$ with parameter $\ka$ and source defined on $\del C$, we have
		$$
		\fB_C(\infty) \sset C_1.
		$$
	\end{enumerate}
	Now let $M$ be any connected set containing $D(x, 200r)$, such that $\del M \sset C^c_3$. Consider an SSP $(\fR, \fB)$ on $M$ with parameter $\ka$, collection of blue seeds $\fB_*$, and source defined on $\del M$. Then
	\begin{equation}
	\label{E:MsminC1}
	\fB(\infty) \sset C_1 \cup D(x, 100 r).
	\end{equation}
\end{lemma}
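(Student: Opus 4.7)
The plan is to argue by contradiction: suppose blue first exits $C_1 \cup D(x, 100r)$ at some vertex $v$ at time $t^*$. Since blue propagates only along edges $(u,w)$ with $X_{\fB}(u,w) = \ka(u,w)$, there is a blue path $p = v_0, v_1, \dots, v_n = v$ whose every edge carries maximal blue weight, starting from some seed $p \in \fB_* = \fB_a \cup \fB_b$. The argument then splits into two cases based on which sub-cluster contains $p$.

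If $p \in \fB_a$, I would apply Lemma \ref{L:boundary-patching} with clusters indexed by the connected components of $C_2$. For each component $K$, set $B_K = \fB_a \cap K$ with enclosing sets $C_K = C_1 \cap K \sset K =: D_K$. The disjointness and boundary requirements of Lemma \ref{L:boundary-patching} follow from the fact that distinct components of $C_2$ are disjoint, together with property 4 ($\del^o C_1 \sset C_2$), while property 6 supplies the self-containment $\fB_C(\infty) \sset C_1 \cap C$ for each cluster regardless of source function or edge weights. Patching these together yields that the $\fB_a$-driven blue stays inside $C_1$, contradicting $v \notin C_1$.

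If $p \in \fB_b \sset D(x, r)$, I would instead use a direct speed-comparison argument exploiting $\ka > 4000$. Because only $\ka$-weight edges carry blue, the path from $p$ to $v$ incurs total blue time at least $n \ka$; since $p \in D(x,r)$ and $v \notin D(x, 100r)$, the triangle inequality gives $n \ge 99 r$, so $t^* \ge T_p + 99 \cdot 4000 \cdot r$, where $T_p$ is the time red first invades $p$. Meanwhile, red spreads with per-edge weights at most $1$ along $C_1$-avoiding paths, so by property 5 a red path from any point $y$ adjacent to $p$ at time $T_p$ to any $z \in D(x, 200r)$ has weighted length at most $2|y - z| \le 800 r$, and lies in $M$ since $M \supset D(x, 200r)$. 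Thus red reaches $v$ by time $T_p + 800 r \ll t^*$, contradicting that $v$ is colored blue at $t^*$.

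The main obstacle is formalizing the red circumnavigation carefully: the red path must avoid not only $C_1$ but also the slowly-growing blue emanating from $\fB_b$. A symmetric application of the speed bound shows that $\fB_b$-blue stays within an $O(r/\ka) = O(1)$-enlargement of $D(x,r)$ throughout the window $[T_p, T_p + 800 r]$, so the extra detour it forces on the red path is negligible relative to the factor-$4000$ slack. A secondary subtlety is that some components of $C_2$ meeting $D(x, 200r)$ may contain $\fB_b$ seeds in addition to $\fB_a$ ones; the Case 1 patching should then be restricted to components of $C_2$ disjoint from a suitable neighborhood of $D(x, r)$, with the remaining nearby components absorbed into the Case 2 analysis.
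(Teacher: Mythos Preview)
Your overall plan (speed comparison between red and blue) is right, but the case split on the seed $p$ at the end of the blue chain does not work, and the gaps you flag as ``subtleties'' are in fact the heart of the matter.

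\textbf{Case 1 is not available as stated.} Lemma \ref{L:boundary-patching} applies to an SSP whose seed set is exactly the union of the clusters $B_K$. Here the actual seed set is $\fB_* = \fB_a \cup \fB_b$, so even for a component $K$ of $C_2$ far from $D(x,r)$, the restriction of $(\fR,\fB)$ to $K$ is an SSP with source on $\del K$ only if no blue ever enters $K$ through $\del K$. That is precisely what is in doubt once $\fB_b$ is present: blue emanating from $\fB_b$ can reach $\del K$ and activate the $\fB_a$-seeds in $K$ from the outside, and then blue in $K$ need not stay in $C_1$. Restricting the patching to components disjoint from a neighborhood of $D(x,r)$ does not save this, because blue can hop from component to component.

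\textbf{Case 2 is missing its red anchor, and the path is wrong.} You assume a red vertex adjacent to $p$ at time $T_p$, but $p\in\fB_b$ may have been activated by a \emph{blue} neighbor (tracing back ultimately to some $\fB_a$-seed). Moreover, the red path you propose must avoid not just $C_1$ but whatever blue exists at the moment each edge is crossed; since blue can sit in $C_2\setminus C_1$ (exactly the scenario you cannot rule out in Case 1), the bound $|y-z|_{C_1^c}\le 2|y-z|$ is not enough, and $y$ adjacent to $p\in D(x,r)$ need not lie in $C_3^c$, so property 5 does not even apply to it.

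The paper resolves both issues with a single construction. First, for $t<\tau$ (the first activation of a $\fB_b$-seed) the process literally has seed set $\fB_a$, so $\fB(t)\sset C_1$ by your Case~1 reasoning; this furnishes a genuine red vertex on the separating curve $\del^o A$ at time $\le\tau$, from which red can run a circuit of length $\le 2000r$ inside $D(x,200r)\setminus[C_1\cup D(x,4r)]$ via Lemma \ref{L:separating-set}. Second, instead of a single blue chain, one builds an \emph{alternating} chain $s_0,q_1,s_1,q_2,\dots,s^*$: segments of $\ka$-weight blue edges outside $C_1\cup D(x,r)$, interleaved with jumps across components $C$ of $C_2$ where assumption~6 supplies an earlier blue boundary point $s_i\in\del C$. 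Each jump costs zero in $|\cdot|_{C_2+}$, so summing gives $T(s^*)<T(s_0)-4000|s^*-s_0|_{C_2+}$, and property~5 applied to points on either side of a $C_3$-component on the geodesic yields $|s^*-s_0|_{C_2+}\ge r/2$, hence a time drop $>2000r$. This contradicts the $2000r$ bound for the red encirclement. The alternating chain is exactly what replaces your broken dichotomy; without it the proof does not close.
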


\begin{proof}
First, if $\fB_b = \emptyset$, then by assumption $6$, $\fB(\infty) \sset C_1$ via Lemma \ref{L:boundary-patching}. We now move to the general case. By Lemma \ref{L:separating-set} with $C_1 = C$ and $C_3 = D$ in that lemma, letting $A$ be the component of $D(x, 98 r) \cup C_3$ containing $x$, for any $x, y \in \del^o A$, we have	
	\begin{equation}
	\label{E:xyyc1}
	|x - y|_{M\setminus [C_1 \cup D(x, 4r])} \le 10 |x-y| \le 2000r.
	\end{equation}
	Here we have used that $M$ contains $D(x, 200r)$ to apply Lemma \ref{L:separating-set}. We use estimate \eqref{E:xyyc1} to prove Lemma \ref{L:higher-scales}.
	
	Let $\tau$ be the first time when a blue seed $b \in \fB_b$ gets added to the blue process $\fB$. Now, $\fB(t) \cup \fR(t) \cup \del M$ is connected for all $t$ and $\del^o A$ separates $\del M$ and $\fB_b$. Therefore for $b$ to join the blue process $\fB$, there must be a point $x \in \del^o A$ with $T(x) \le \tau$. Also, for $t <\tau$, we have $\fB(t) \sset C_1$ since the process evolves as in the $\fB_b = \emptyset$ case up to this time. Therefore since $C_1 \cap \del^o A = \emptyset$, we have $C(x) = \fR$.

We will check that $\fR$ will engulf every square in $\del^o A$ before the blue process leaves $D(x, 4r) \cup C_1$. By the estimate \eqref{E:xyyc1}, $\fR$ will engulf all squares in $A$ by time $T(x) + 2000r$ unless $\fB$ can exit $D(x, 4r) \cup C_1$ before time $T(x) + 2000r$. We suppose that this is not the case. Let $s_0$ be the location of the first vertex outside of $D(x, 4r) \cup C_1$ that joins $\fB$, and let $t_0 = T(s_0)$. If there are multiple potential choices for $s_0$, we choose one arbitrarily. 

Next, we will construct a chain of points from $s_0$ ending at a point $s^*$ which is adjacent to $D(x, r)$. First, for the square $s_0$ to be added to the blue process, there must be a chain of squares
$$
s_{0, 0} = s_0, s_{0, 1}, \dots, s_{0, k_0} = q_1, s_{0, k_0 + 1} = p_1
$$
such that
\begin{enumerate}
	\item $(s_{0, i}, s_{0, i+1}) \in E$ for all $i$ and all $s_{0, i}$ are coloured blue,
	\item $p_1$ is located in $C_1 \cup D(x, r)$, and $s_{0, j} \notin C_1 \cup D(x,r)$ for all $j \le k_0$.
	\item Every edge $(s_{0, i+1}, s_{0, i})$ is used in the construction of the
	SSP. More precisely, $T(s_{0, i+1}) + X_{C(s_{0, i+1})}(s_{0, i+1}, s_{0, i}) = T(s_{0, i})$ for all $i$.
\end{enumerate}
In particular, point 2 guarantees that $s_{0, j}$ is not a blue seed for any $j < k_0$, so point 3 guarantees that the edges $(s_{0, i+1}, s_{0, i})$ are all used with weight $\ka(s_{0, i+1}, s_{0, i}) > 4000$. Hence
\begin{equation}
\label{E:s0q1}
T(q_1) < T(s_0) - 4000 k_0 \le T(s_0) - 4000 |q_1 - s_0| \le T(s_0) - 4000 |q_1 - s_0|_{C_2 +}.
\end{equation}
At this point, if $p_1 \in D(x,r)$, we set $s^* = q_1$. If this is not the case we proceed as follows. Let $C$ be the component of $C_2$ containing $q_1$ so that $q_1 \in C \smin C_1$. By assumption 6 of the theorem, the only way that $q_1$ can be absorbed into the blue process is if some point $s_1 \in \del C$ was absorbed by the blue process in the time interval $[0, T(q_1))$. Therefore
\begin{equation}
\label{E:s1p1}
T(s_1) < T(q_1) = T(q_1) - 4000 |q_1 - s_1|_{C_2 +},
\end{equation}
where the equality uses that $q_1$ and $s_1$ are in the same component of $C_2$, so $|q_1 - s_1|_{C_2 +} = 0$.

We can continue in this way, constructing a sequence of points $s_0, q_1, s_1, q_2, s_2, \dots$, terminating when we reach a point $q_k := s^*$ which is adjacent to $D(x, r)$. Note that this procedure must terminate since the SSP on $M$ only infects finitely many squares. Now, adding together the inequalities in \eqref{E:s0q1} and \eqref{E:s1p1} for all $q_i, s_i$ and applying the triangle inequality in the $|\cdot|_{C_2+}$ metric, we have that
$$
T(s^*) < T(s_0) - 4000 |s^* - s_0|_{C_2+}.
$$
Now, $s_0 \notin D(x, 4r)$ and $s^* \in D(x, r + 1)$. We will use this to bound $|s^* - s_0|_{C_2 +}$. Let $G$ be a $|\cdot|_{C_2 +}$-geodesic between $s_0$ and $s^*$. Since $|s_0 - s^*| \ge 3r$ and all components of $C_3$ have diameter at most $r$, there must be points $z_1, z_2 \in G \smin C_3$ with $|z_1 - z_2| \ge r$. Therefore by \eqref{E:1ga-bd}, we have
$$
|s^* - s_0|_{C_2+} \ge |z_1 - z_2|_{C_2 +} \ge  r/2, \quad \text{ and so } \qquad T(s^*) \le T(s_0) - 2000 r.
$$
Now, $T(x) \le T(s^*)$ so we have 
$$
T(x)  + 2000 r < T(s_0).
$$
Since we have assumed that $T(s_0) < T(x) + 2000 r$, this is a contradiction.
Therefore the red process engulfs every square in $\del^o A$, and so the growth of blue process from $\fB_b$ is contained to $A$. Also, no component of $C_2$ on the exterior of $A$ can be invaded by the part of the blue process that can be linked back to $\fB_b$. Therefore by the $\fB_b = \emptyset$ case, the growth of the blue process outside of $A$ is contained to $C_1$. Combining these with the fact that $A \sset D(x, 100r)$ yields \eqref{E:MsminC1}.
\end{proof}

Finally, we have all the tools to prove Theorem \ref{T:BssetI}.
	\begin{proof}[Proof of Theorem \ref{T:BssetI}] First, if $0 \in V$, then let $\del^* V = \del V \cup \{0\}$. If not, let $\del^*V = \del V$. Define $S:\del^* V \to [0, \infty)$ by setting $S(u) = T(u)$. The assumptions of Theorem \ref{T:BssetI} guarantee that all $u \in \del^* V$ get coloured red, so the restriction of $(\fR, \fB)$ to $V$ is an SSP on $V$ with source function $S$. It suffices to analyze this process, denoted by $(\fR', \fB')$. We will show that for any $t$ we have $\fB'(t) \sset [\fC]$. 
		
		Since our SSP on all of $\Z^2$ has finite speed, by the second assumption of the theorem, at any time $t$ there is a maximal scale $k_t$ such that only blue seeds in sets $\fB_{\diamond, k}, k \le k_t$ have been encountered by $(\fR', \fB')$ by time $t$. Therefore $(\fR', \fB')$ grows in the same way up to time $t$ as if we replaced the collection of blue seeds by the smaller set $\bigcup_{k=1}^{k_t} A_k(\fB_{\diamond})$.
		
			Now, define $K_0 = \emptyset$, and let $K_k$ be the union of all connected components of $D(A_k(\fB_\diamond), 300r_{k-1}) \cup \fD_{k-1}$ containing a point in $A_k(\fB_\diamond)$, and let $\cK_k = \bigcup_{i=1}^k K_i$. Note that $\cK_k \sset [\fD_k]$ for all $k$.
		We will use induction on $0 \le k \le k_t$ to prove that
		\begin{itemize}
			\item For any component $C$ of $[\cK_k]$, any parameter-$\ka$ SSP $(\fR_C, \fB_C)$ on $C$ with collection of blue seeds $\fB_\diamond \cap C$ and source defined on $\del C$ has
			$$
			\fB_C(\infty) \sset [\fC] \cap C.
			$$
		\end{itemize} 
		
		The $k=0$ base case is trivial.
		Suppose that this claim holds for $k-1$, and consider a connected component $C$ of $[\cK_k]$. If $C$ does not contain an element of $A_k(\fB_\diamond)$, then $C$ was also a connected component of $[\cK_{k-1}]$, and the claim follows from the inductive hypothesis. 
		
		Therefore we may assume that $C$ does contain an element $x$ of $A_k(\fB_\diamond)$. 
		Components of $[\fD_{k-1}]$ have diameter at most $r_{k-1}$ by Lemma \ref{L:IJlemma}. By this bound and the construction of $A_k(\fB_\diamond)$, we necessarily have that $A_k(\fB_\diamond) \cap C \sset D(x, r_{k-1})$.
		
		At this point, we are in the setup of Lemma \ref{L:higher-scales}, with $M = C$ and
		\begin{itemize}
			\item $\fB_a = \bigcup_{i=1}^{k-1} A_i(\fB) \cap C, \qquad \fB_b = A_k(\fB_\diamond) \cap C \sset D(x, r_{k-1})$.
			\item $r = r_{k-1}$.
			\item $C_1 = [\fC_{k-1}], C_2 = [\cK_{k-1}], C_3 = [\fD_{k-1}].$
		\end{itemize} 
	Conditions $1, 3,$ and $4$ of Lemma \ref{L:higher-scales} follow from the above construction. Condition $2$ follows from Lemma \ref{L:IJlemma}. Condition $5$ follows from Proposition \ref{P:distance-bds}, the bound \eqref{E:gammabound} on $\ga$, and the fact that $[\cK_{k-1}] \sset [\fC^+_{k-1}]$. Condition $6$ follows from the inductive hypothesis. Therefore the claim follows from Lemma \ref{L:higher-scales}, since
		$$
		D(x, 100 r_{k-1}) \sset \fC_k.
		$$
		Now we apply Lemma \ref{L:boundary-patching}, where the blue seed set is $\bigcup_{k=1}^{k_t} \fB_{k, \diamond}$, the sets $C_i$ are components of $[\fC_{k_t}]$, the sets $D_i$ are components of $[\mathcal K_{k_t}]$, the set $M = V$, and the source is defined on $\del^* V \sset M \smin D$. The key hypothesis of that lemma is given by the above claim. Lemma \ref{L:boundary-patching} then implies that  $\fB'(t) \sset [\fC_{k_t}] \sset [\fC]$, as desired.
	\end{proof}

	\subsection{Bounds for random blue seeds}
	\label{SS:random-blue-seeds}
	
	So far, our study of SSPs has been purely deterministic. In the remainder of the paper, we want to consider random SSPs. Typically, the randomness on the edge weights will be quite complex. However, in our applications of Theorem \ref{T:BssetI} the configuration of blue seeds $\fB_\diamond$ will be always be stochastically dominated by a low-intensity i.i.d.\ process. In this section, we bound the behaviour of $\fC, \fD$ in this case. We start with a few straightforward lemmas. The first is a deterministic monotonicity lemma.

	\begin{lemma}
		\label{L:dom-seeds}
		Let $\fB_\diamond' \sset \fB_\diamond \sset \Z^2$ be two collections of blue seeds. Then for every $k \ge 1$, we have $\fB_{\diamond, k}' \sset \fB_{\diamond, k}$. Moreover, if the set $\fB_\diamond$ satisfies assumptions $2$ and $3$
		of Theorem \ref{T:BssetI} for some set $V$, then $\fB_\diamond'$ also satisfies these assumptions, and with $\fC, \fD, \fC', \fD'$ as in that theorem grown from the sets $\fB_\diamond, \fB_\diamond'$, we have
		\begin{equation}
		\label{E:IbIB}
		\fC' \sset \fC \qquad \text{ and } \qquad \fD' \sset \fD.
		\end{equation}
	\end{lemma}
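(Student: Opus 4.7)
The plan is to first prove the inclusion $\fB_{\diamond,k}' \sset \fB_{\diamond,k}$ for every $k$ by induction, and then read off the remaining claims as consequences.

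\emph{Induction on $k$.} The base case $k=1$ is immediate since $\fB_{\diamond,1}=\fB_\diamond$. For the inductive step, suppose $x \in \fB_{\diamond,k}'$ but $x \notin \fB_{\diamond,k}$; since $\fB_\diamond' \sset \fB_\diamond$, this forces $x \in A_i(\fB_\diamond)$ for some $i<k$. By the inductive hypothesis $\fB_{\diamond,i}' \sset \fB_{\diamond,i}$, so the defining emptiness condition for $A_i(\fB_\diamond)$ (a disk for $i=1$, an annulus for $i\ge 2$) transfers automatically from $\fB_{\diamond,i}$ down to $\fB_{\diamond,i}'$. Combined with $x \in \fB_{\diamond,k}' \sset \fB_{\diamond,i}'$ (the cumulative sets are decreasing in $k$), this gives $x \in A_i(\fB_\diamond')$, contradicting $x \in \fB_{\diamond,k}'$. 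Hence $\fB_{\diamond,k}' \sset \fB_{\diamond,k}$.

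\emph{The containments $\fC' \sset \fC$ and $\fD' \sset \fD$.} Any $x \in A_k(\fB_\diamond')$ lies in $\fB_{\diamond,k}' \sset \fB_{\diamond,k}$, so assumption~$2$ for $\fB_\diamond$ places $x$ into a unique $A_j(\fB_\diamond)$, and $x \in \fB_{\diamond,k}$ forces $j \ge k$. Since $r_{k-1} \le r_{j-1}$ and $r_k \le r_j$, we obtain $D(x, 100 r_{k-1}) \sset D(A_j(\fB_\diamond), 100 r_{j-1}) \sset \fC$ and $D(x, r_k/100) \sset \fD$, so taking unions over $x$ and $k$ yields the two inclusions.

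\emph{Verifying assumptions~$2$ and $3$ for $\fB_\diamond'$.} For assumption~$2$, let $x \in \fB_\diamond'$ and let $j$ be its scale in the $\fB_\diamond$-decomposition. The same transfer argument as in the induction shows that the emptiness condition defining $A_j(\fB_\diamond')$ holds; so either $x \in \fB_{\diamond,j}'$, in which case $x \in A_j(\fB_\diamond')$, or $x \notin \fB_{\diamond,j}'$, in which case $x \in A_i(\fB_\diamond')$ for some $i<j$ by definition of $\fB_{\diamond,j}'$. Either way $x$ is assigned a finite scale. For assumption~$3$, filling in bounded complementary components is inclusion-monotone: $\fD' \sset \fD$ gives $\fD^c \sset (\fD')^c$, and any unbounded component of $\fD^c$ sits inside an unbounded component of $(\fD')^c$, so $[\fD'] \sset [\fD] \sset V \smin (\del V \cup \{0\})$.

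The only real subtlety is that the individual layers $A_k(\fB_\diamond)$ are not monotone in $\fB_\diamond$, since removing a seed can both promote and demote the scales of its neighbours. The argument sidesteps this by working with the cumulative sets $\fB_{\diamond,k}$, which \emph{are} monotone under inclusion, and only invokes $A_k$ at the very end to recover $\fC$ and $\fD$.
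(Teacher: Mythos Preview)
Your proof is correct and follows essentially the same approach as the paper: the induction on $k$ uses the same key observation that the emptiness condition defining $A_i$ transfers from $\fB_{\diamond,i}$ to $\fB_{\diamond,i}'$ via the inductive hypothesis, and the containments $\fC'\sset\fC$, $\fD'\sset\fD$ are deduced by showing each $x\in A_k(\fB_\diamond')$ lands in some $A_j(\fB_\diamond)$ with $j\ge k$. The only cosmetic differences are that you phrase the inductive step as a contradiction argument and make explicit the monotonicity of the hole-filling operator $[\cdot]$ used for assumption~3, which the paper invokes implicitly.
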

	
	Note that the monotonicity in Lemma \ref{L:dom-seeds} does not necessarily hold at the level of the entire blue processes $\fB, \fB'$ generated from the seed sets $\fB_\diamond, \fB_\diamond'$.
	
	\begin{proof}
		We prove the first claim by induction. The base case $\fB_{\diamond, 1}' \sset \fB_{\diamond, 1}$ follows since $\fB_{\diamond, 1} = \fB_{\diamond}, \fB_{\diamond, 1}' = \fB_{\diamond}'$. Now assume that $\fB_{\diamond, k}' \sset \fB_{\diamond, k}$. By definition, we have $\fB_{\diamond, k+1} = \fB_{\diamond, k} \smin A_k(\fB_\diamond)$, and so by the inductive hypothesis,
		$$
		\fB_{\diamond, k}' \smin (A_k(\fB_\diamond) \cap \fB_{\diamond, k}') \sset \fB_{\diamond, k+1}.
		$$
		Since $\fB_{\diamond, k+1}' = \fB_{\diamond, k}' \smin A_k(\fB_\diamond')$, to complete the proof it suffices to show that 
		\begin{equation*}
		\label{E:AkB}
		A_k(\fB_\diamond) \cap \fB_{\diamond, k}' \sset A_k(\fB_\diamond'). 
		\end{equation*}
		This follows from the definition and the inductive hypothesis. Indeed,
		\begin{align*}
		A_k(\fB_\diamond') &= \{x \in \fB_{\diamond, k}' : \fB_{\diamond, k}' \cap A(x, r_{k-1}, r_k/3) = \emptyset\} \\
		&\supset \{x \in \fB_{\diamond, k}' : \fB_{\diamond, k} \cap A(x, r_{k-1}, r_k/3) = \emptyset\} \\
		&= A_k(\fB_\diamond) \cap \fB_{\diamond, k}',
		\end{align*}
		where the containment uses the inductive hypothesis. We move on to the `Moreover' statement. Assume that $\fB_\diamond, V$ satisfy the assumptions of Theorem \ref{T:BssetI}. Then by assumption $2$ in Theorem \ref{T:BssetI}, we have $\bigcap_{k \ge 1} \fB_{\diamond, k} = \emptyset$ and so $\bigcap_{k \ge 1} \fB'_{\diamond, k} = \emptyset$ as well. This implies assumption $2$ of Theorem \ref{T:BssetI} for $\fB_{\diamond}'$. It just remains to show \eqref{E:IbIB}, since this will also guarantee that $\fB_{\diamond}'$ satisfies assumption $3$ of Theorem \ref{T:BssetI}. To see why these containments hold, simply observe that since $\fB_{\diamond, k}' \sset \fB_{\diamond, k}$ for all $k$ and $\bigcap_{k \ge 1} \fB_{\diamond, k} = \emptyset$, that if $b \in A_k(\fB_{\diamond}')$ for some $k$, then $b \in A_{\ell}(\fB_{\diamond})$ for some $\ell \ge k$.
	\end{proof}

Next, we show that points in higher scales $\fB_{\diamond, k}$ are scarce if $\fB_\diamond$ is dominated by a low-intensity Bernoulli process.

	\begin{lemma}
		\label{L:Ak-estimate}
		Consider any collection of blue seeds $\fB_{\diamond}$. 
		Then for any set $X \sset \Z^2$, the set $X \cap \fB_{\diamond, k}$ is a function of $\fB \cap D(X, r_{k-1}/2)$. 
	
	Moreover, suppose that $\fB_{\diamond}$ is stochastically dominated by an i.i.d.\ Bernoulli process with mean $p> 0$.  There exists a universal constant $c > 0$ such that for any $x \in \Z^2$,
		$$
		\mathbb P(x \in \fB_{\diamond, k}) \le (c p)^{2^{k-1}}.
		$$
	\end{lemma}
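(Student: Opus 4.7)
The plan is to prove the two parts of the lemma separately. For the first part, I will induct on $k \ge 1$. The base case $k = 1$ is immediate since $\fB_{\diamond, 1} = \fB_\diamond$, so $X \cap \fB_{\diamond, 1}$ is determined by $\fB_\diamond \cap X \sset \fB_\diamond \cap D(X, r_0/2)$. For the inductive step, combining $\fB_{\diamond, k+1} = \fB_{\diamond, k} \smin A_k(\fB_\diamond)$ with the definition of $A_k$ shows that for $k \ge 2$, a point $x$ lies in $\fB_{\diamond, k+1}$ if and only if $x \in \fB_{\diamond, k}$ and $\fB_{\diamond, k} \cap A(x, r_{k-1}, r_k/3) \ne \emptyset$ (with the analogous characterization for $k=1$ using the disk $D(x, r_1/3) \smin \{x\}$). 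Hence $X \cap \fB_{\diamond, k+1}$ is determined by $\fB_{\diamond, k} \cap D(X, r_k/3)$, which by the inductive hypothesis is determined by $\fB_\diamond \cap D(X, r_k/3 + r_{k-1}/2)$. The bound $r_k \ge 10^{12} r_{k-1}$ from \eqref{E:gammabound} gives $r_k/3 + r_{k-1}/2 \le r_k/2$, closing the induction.

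For the second part, I will first invoke Lemma \ref{L:dom-seeds} to couple $\fB_\diamond$ inside a translation-invariant i.i.d.\ Bernoulli$(p)$ process $\fB_\diamond'$ so that $\fB_{\diamond, k} \sset \fB_{\diamond, k}'$ pointwise. It then suffices to bound $\alpha_k := \P(0 \in \fB_{\diamond, k}')$. The first part implies that whenever $|x-y| > r_{k-1}$, the events $\{x \in \fB_{\diamond, k}'\}$ and $\{y \in \fB_{\diamond, k}'\}$ are independent because they depend on disjoint collections of i.i.d.\ Bernoulli seeds; there is enough slack in $r_k/3 + r_{k-1}/2 \le r_k/2$ that a marginal tightening of the locality radius absorbs the lattice-boundary case $|x-y| = r_{k-1}$. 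Combining independence with the recursive characterization and summing over the annulus yields
$$
\alpha_{k+1} \le \sum_{y \in A(0, r_{k-1}, r_k/3)} \P(0 \in \fB_{\diamond, k}') \, \P(y \in \fB_{\diamond, k}') \le C r_k^2 \alpha_k^2
$$
for a universal constant $C$.

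To finish, I will unroll this recursion. Taking logarithms of $\alpha_{k+1} \le C r_k^2 \alpha_k^2$ and iterating from $\alpha_1 \le p$ gives
$$
\log \alpha_k \le 2^{k-1} \log p + 2^{k-1} \sum_{j=1}^{k-1} 2^{-j} \log(C r_j^2).
$$
By \eqref{E:ri-relationship}, $r_j = \Theta((j!)^2)$, so $\log r_j = O(j \log j)$ and the series $\sum_{j \ge 1} 2^{-j} \log(C r_j^2)$ converges to a finite constant $K$. Thus $\alpha_k \le (e^K p)^{2^{k-1}}$, so the lemma holds with $c = \max(1, e^K)$. The main technical step is the geometric bookkeeping in the first part; once local dependence is established, the probability estimate is a routine independence-plus-doubling recursion that exploits the double-exponential decay of $\alpha_k$ against the polynomial factor $r_k^2$.
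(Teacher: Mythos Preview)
Your proof is correct and follows essentially the same approach as the paper: an induction on $k$ for the locality statement using $r_k/3 + r_{k-1}/2 \le r_k/2$, then the reduction to the i.i.d.\ case via Lemma~\ref{L:dom-seeds}, followed by the recursion $\alpha_{k+1} \le C r_k^2 \alpha_k^2$ from independence plus a union bound over the annulus, and finally unrolling using $\log r_j = O(j\log j)$ so that $\sum_j 2^{-j}\log(C r_j^2)$ converges. The only cosmetic differences are your index shift ($k\to k+1$ rather than $k-1\to k$) and your explicit remark about the boundary case $|x-y|=r_{k-1}$, which the paper handles implicitly since the annulus has inner radius $r_{k-2}$ while the dependence radius at level $k-1$ is $r_{k-2}/2$.
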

	
	\begin{proof}
		We prove the first statement by induction on $k$. The $k=1$ base case is immediate. Now let $k \ge 2$ and suppose that the statement holds for $k-1$. By the definition of $A_k(\fB_\diamond)$, the event $x \in \fB_{\diamond, k}$ only depends on $\fB_{\diamond, k-1} \cap D(x, r_{k-1}/3)$ and whether or not $x \in \fB_\diamond$. By the inductive hypothesis, the set $\fB_{\diamond, k-1} \cap D(X, r_{k-1}/3)$ only depends on $\fB_{\diamond} \cap D(X, r_{k-1}/3 + r_{k-2})$. The claim follows since $r_{k-1}/3 + r_{k-2} < r_{k-1}/2$.
		
	Now we move to the `Moreover' claim. By Lemma \ref{L:dom-seeds}, we may assume that $\fB_{\diamond}$ is a Bernoulli process with mean $p$.
	 Let $\rho_k$ be the probability that $x \in \fB_{\diamond, k}$. We recursively bound $\rho_k$. If $x \in \fB_{\diamond, k}$, then $x \in \fB_{\diamond, k-1}$ but $x \notin A_{k-1}(\fB_{\diamond})$. Therefore there is some other $y \in \fB_{\diamond, k-1}$ such that $r_{k-2} < |x - y| \le r_{k-1}/3$. By the first part of the lemma, points in $\fB_{\diamond, k-1}$ are independent at distances greater than $r_{k-2}/2$. Therefore the probability that this happens for a fixed $y$ is $\rho_{k-1}^2$. Therefore by a union bound,
	 $$
	 \rho_k \le r_{k-1}^2 \rho_{k-1}^2.
	 $$
	Using the initial bound
		$
		\rho_1 \le p,
		$
		we get that
		$$
		\rho_k \le p^{2^{k-1}} r_1^{2^{k-1}} r_3^{2^{k-2}} \cdots r_{k-1}^{2}.
		$$
		Now, $r_k \le c e^{2k \log k}$ for all $k$ for some $c > 0$, so we have that
		$$
		\rho_k \le p^{2^{k-1}} c^{2^k}\exp \lf(2^{k-1} \sum_{i=1}^k 2^{-i} i \log i \rg).
		$$
		The sum $ \sum_{i=0}^\infty 2^{-i} i \log i$ is convergent, so the above is bounded by
		$
		(cp)^{2^{k-1}}
		$
		for some universal $c >0$.
	\end{proof}
	
	We use Lemma \ref{L:Ak-estimate} to get bounds on the engulfing set $\fD$. 
	\begin{lemma}
		\label{L:largest-scale} Again, assume that $\fB_{\diamond}$ is stochastically dominated by an i.i.d.\ Bernoulli process with parameter $p> 0$.
		Fix a ball $D(x, r)$ for some $r \in \N$, and let $M(x, r)$ be the diameter of the largest component of $[\fD]$ that intersects $D(x, r)$. For some absolute $c>0$, we have
		\begin{equation}
		\label{E:Mrrk}
		\P\lf( M(x, r) > r_k \rg) \le r^2 (cp)^{2^{k-1}} \qquad \text{ and } \qquad \P\lf( M(x, r) > 0 \rg) \le r^2 cp.
		\end{equation}
		In particular, for every $n_0 \in \N$ with $n_0 \ge 3$ we have
		\begin{equation}
		\label{E:Mn-BC}
		\P\lf( M(x, n) \ge n/2 \text{ for some } n \ge n_0\rg) \le  \exp\lf(\log(cp) \exp (\log n_0/(c\log \log n_0))\rg).
		\end{equation}
		Moreover, for all $k \ge 1$,
		\begin{equation}
		\label{E:fDk-estimate}
		\P([\fD] \cap D(x, r) \ne [\fD_k] \cap D(x, r)) \le  r^2 (cp)^{2^{k-1}},
		\end{equation}
		and for all sufficiently small $p$, we have
		\begin{equation}
		\label{E:PDxr}
		\P\lf(|[\fD] \cap D(x, r)| \ge \frac{1}{3}|D(x, r)| \rg) \le \exp\lf(\log(cp) \exp (\log r/(c\log \log r))\rg)
		\end{equation}
		for an absolute constants $c > 0.$
		The same bounds hold with $\fC, \fC_k$ in place of $\fD, \fD_k$.
	\end{lemma}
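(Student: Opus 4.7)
The plan is to leverage two prior inputs: Lemma~\ref{L:Ak-estimate} provides the doubly-exponential tail $\P(y \in \fB_{\diamond,j}) \le (cp)^{2^{j-1}}$, and Lemma~\ref{L:IJlemma} says every component of $[\fD_j]$ has diameter at most $r_j$. A structural observation I would use throughout is that if $C$ is a component of $[\fD]$ whose maximal involved scale $j^* := \max\{j : A_j(\fB_\diamond) \cap C \neq \emptyset\}$ is finite, then in fact $C$ is a component of $[\fD_{j^*}]$: $C \cap \fD \sset \fD_{j^*}$, and any bounded hole of $\fD$ sitting inside $C$ is surrounded by $C \cap \fD \sset \fD_{j^*}$ and so remains a bounded hole of $\fD_{j^*}$. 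Consequently $\mathrm{diam}(C) \le r_{j^*}$ by Lemma~\ref{L:IJlemma}.

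For \eqref{E:Mrrk}, the event $\{M(x,r) > r_k\}$ forces such a $C$ with $j^* > k$ to meet $D(x,r)$, and then the witnessing seed $y \in A_{j^*}(\fB_\diamond)$ must lie within distance $r_{j^*}$ of $D(x,r)$. A union bound over $j > k$ and $y \in D(D(x,r), r_j)$ combined with Lemma~\ref{L:Ak-estimate} gives
$$
\P(M(x,r) > r_k) \le \sum_{j>k} |D(D(x,r), r_j)|\,(cp)^{2^{j-1}},
$$
and for $p$ small the $j=k+1$ term dominates because $2^{j-1}$ grows much faster than $\log r_j^2 \asymp 4j \log j$, absorbing the $r_j$-part into $r^2$ up to a constant in $c$. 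The bound $\P(M(x,r)>0) \le r^2 cp$ is the $k=0$ case, and the same seed-tracking argument yields \eqref{E:fDk-estimate}: any discrepancy between $[\fD]$ and $[\fD_k]$ inside $D(x,r)$ must come from a seed at some scale $j > k$ within distance $r_j$ of $D(x,r)$. For \eqref{E:Mn-BC}, I would sum \eqref{E:Mrrk} over $n \ge n_0$, choosing $k(n)$ with $r_{k-1} < n/2 \le r_k$; since $r_k \asymp (k!)^2$ by \eqref{E:ri-relationship}, $k(n) \gtrsim \log n/\log\log n$, hence $2^{k(n)-2}$ is of order $\exp(\log n/(c\log\log n))$, and the series is dominated by its $n = n_0$ term.

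For \eqref{E:PDxr}, I would decompose $[\fD] \cap D(x,r)$ by scale. The structural observation bounds the total area by $\sum_{j \ge 1} O(r_j^2) N_j$, where $N_j$ is the count of $A_j$-seeds in $D(x, r+r_j)$, stochastically dominated by a Binomial with mean at most $(r+r_j)^2 (cp)^{2^{j-1}}$. For $p$ small, the expected area is $\ll r^2/3$, dominated by the $j=1$ term (of order $p r^2$). A Chernoff bound on each $N_j$, combined with the doubly-exponential decay in $j$ to ensure convergence when summed, produces the $e^{-c'r}$ tail. The $\fC$ half of the lemma is then immediate from $100 r_{k-1} < r_k/100$: this gives $\fC \sset \fD$, and a bounded hole of $\fC^c$ is surrounded by $\fC \sset \fD$ and thus remains a bounded hole of $\fD^c$, so $[\fC] \sset [\fD]$. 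Every component of $[\fC]$ sits inside a component of $[\fD]$, transferring all bounds on $M$ and on area; the $[\fC] \ne [\fC_k]$ estimate is proved by repeating the seed-chasing argument from \eqref{E:fDk-estimate} with the smaller $\fC$-radii.

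The main obstacle is \eqref{E:PDxr}: the multi-scale construction makes $|[\fD] \cap D(x,r)|$ a sum of dependent contributions, so care is needed to avoid polynomial prefactors that would dominate the target $e^{-c'r}$. My plan is to handle the dominant scale-$1$ contribution by direct Chernoff on $N_1$ (where the underlying Bernoulli structure gives the right linear-in-radius tail after restricting to a sub-event where no rare higher-scale cluster appears), and then show via Lemma~\ref{L:Ak-estimate} that the combined higher-scale contribution is itself negligible except on an event of probability much smaller than $e^{-c'r}$.
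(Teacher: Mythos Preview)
Your treatment of \eqref{E:Mrrk}, \eqref{E:Mn-BC}, and \eqref{E:fDk-estimate} matches the paper's: locate a witnessing seed at scale $j>k$ within distance $r_j$ of the ball, union bound using Lemma~\ref{L:Ak-estimate}, and absorb the polynomial-in-$r_j$ factor into the doubly-exponential tail. Your reduction of the $\fC$ statements to the $\fD$ ones via $[\fC]\sset[\fD]$ is also what the paper intends.

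For \eqref{E:PDxr} your approach diverges from the paper's, and there is a genuine gap. The assertion that $N_j$ is stochastically dominated by a Binomial is only justified for $j=1$: for $j\ge 2$ the indicators $\mathbf{1}(y\in\fB_{\diamond,j})$ are $r_{j-1}$-dependent by the first part of Lemma~\ref{L:Ak-estimate}, and no Binomial domination has been established. Your fallback plan, that the combined higher-scale contribution is negligible outside an event of probability $\ll e^{-c'r}$, does not hold as stated either: already at $j=2$ one has $\P(N_2\ge 1)\asymp r^2(cp)^2$, which is only polynomially small in $r$, so you cannot simply discard scales $j\ge 2$ by a crude event; you still need concentration of each $N_j$, and that returns you to the dependence problem you have not addressed.

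The paper avoids this by arguing differently. Rather than decomposing $|[\fD]\cap D(x,r)|$ scale by scale, it truncates once to $[\fD_k]$ at a scale with $r_k\sim r^{1/2}$, paying the error from \eqref{E:fDk-estimate}. The key point is that the \emph{single} indicator $\mathbf{1}(y\in[\fD_k])$ has finite-range dependence of range $3r_k$: this follows by combining the diameter bound in Lemma~\ref{L:IJlemma} (components of $[\fD_k]$ have diameter at most $r_k$) with the locality in Lemma~\ref{L:Ak-estimate} (membership in $\fB_{\diamond,k}$ depends only on seeds within $r_{k-1}/2$). One then partitions $D(x,r)$ into $O(r_k^2)$ groups of mutually $3r_k$-separated points, applies Hoeffding within each group, and takes a union bound, obtaining $O(r_k^2)e^{-cr^2/r_k^2}=O(r)e^{-cr}$. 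Your per-scale Chernoff idea can be repaired by applying exactly this block-partitioning to each $N_j$ separately, but at that point you are reproducing the paper's argument with an extra sum over $j$.
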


	\begin{proof}
	First, $[\fD] = \bigcup_{k=1}^\infty [\fD_k]$. This follows since any bounded component of $\fD^c$ is a bounded component of $\fD_k^c$ for all large enough $k$.	Next, by Lemma \ref{L:IJlemma}, all components of $[\fD_k]$ have diameter at most $r_k$. Therefore letting $[\fD_k]_a$ be the collection of components of $[\fD_k]$ that are entirely contained in $D(x, a)$ and are not also components of $[\fD_{k-1}]$, we have 
	$$
	[\fD] \cap D(0, r) \sset \bigcup_{k=1}^\infty [\fD_k]_{r + r_k},
	$$
	and
	$$
	M(r) \le r_K, \text{ where } K = \sup \{k \ge 1: [\fD_k]_{r + r_k} \ne \emptyset\}.
	$$ 
	Moreover, if the set above is empty, then $M(r) = 0$. We estimate $\P([\fD_k]_{r + r_k} \ne \emptyset)$. This is bounded above by $\P(A_k(\fB_\diamond) \cap D(x, r + r_k) \ne \emptyset)$. We can bound this by Lemma \ref{L:Ak-estimate}, a union bound, and the bound $r_k \le c\exp(2k \log k)$. This yields \eqref{E:Mrrk}. The bound \eqref{E:fDk-estimate} also follows from this proof, and the analogous bound for $\fC, \fC^k$ follows analogously.
	
	The inequality \eqref{E:Mn-BC} follows from the first inequality in \eqref{E:Mrrk} and a union bound over $n \in \{n_0, n_0 + 1, \dots\}$. To apply \eqref{E:Mrrk}, we use the fact that $r_k \le \exp (3k \log k)$ for all large enough $k$, which ensures that for large enough $n$ we have $n/2 > r_{k(n)}$, where $k(n) = \ceil{\log n/(4\log \log n)}$. Note that to go from \eqref{E:Mrrk} to \eqref{E:Mn-BC} we may need to increase $c$.
	
	For \eqref{E:PDxr}, we first bound the same probability with $[\fD_k]$ in place of $[\fD]$. By the second bound in \eqref{E:Mrrk} with $r=1$ and translation invariance of each of the sets $[\fD_k]$, we have that 
	$$
	\E |[\fD_k] \cap D(x, r)| \le cp |D(x, r)|.
	$$
	Moreover, the independence in Lemma \ref{L:Ak-estimate} and the diameter bound in Lemma \ref{L:IJlemma}  implies that if points $x_1, \dots, x_k$ are distance at least $3r_k$ apart, then the events $\{x_i \in [\fD_k]\}$ are independent. Therefore by partitioning into $3 r_k \times 3 r_k$ blocks that cover $D(x, r)$, for $r_k \le 10 r$ we can write 
	$$
	|[\fD_k] \cap D(x, r)| \le  \sum_{i=1}^{9r_k^2} \sum_{j=1}^{L}  X_{i, j},
	$$
	where the $X_{i, j}$ are identically distributed Bernoulli random variables with mean $\le cp$, the $X_{i, j}, j \in \{1, \dots, L\}$ are independent, and $L \le 2|D(x, r)|/(9r_k^2)$. The bound on $L$ uses that $r_k \le 10 r$. By a union bound and Hoeffding's inequality, we then have
	$$
	\P(|[\fD_k] \cap D(x, r)| \ge \frac{1}{3} |D(x, r)|) \le 9 r_k^2 \P\Big(\sum_{j=1}^L  X_{1, j} \ge \frac{|D(x, r)|}{27 r_k^2}\Big) \le 9 r_k^2 e^{- c r^2/r_k^2}.
	$$
	Taking $r_k = r^{1/2 + o(1)}$ and combining this with the bound in \eqref{E:fDk-estimate} then yields the result.
\end{proof}	

We now translate Theorem \ref{T:BssetI} to give two results about random SSPs. We start with a basic version for constant $\ka$ and $V = \Z^2$. For this version, we give concrete bounds.
	
	\begin{theorem}
		\label{T:random-red-blue}
Consider a random SSP $(\fR, \fB)$ on $\Z^2$ driven by potentially random clocks $X_\fR, X_\fB$, a collection of blue seeds $\fB_*$ and constant parameter $\ka > 4000$.
Suppose additionally that $\fB_*$ is stochastically dominated by an i.i.d.\ Bernoulli process $B'$ with parameter $p > 0$.		
	Let $\fC$ be defined from $\fB_*$ as in Theorem \ref{T:BssetI}. Then for small enough $p > 0$, the event where
	$$
	\fB_* = \bigcup_{i=1}^\infty A_i(\fB_*) \quad \text{ and } \quad [\fC] \text{ has only bounded components}
	$$
	is almost sure. Also for a universal $c > 0$, the probability of the event
	\begin{equation*}
	\begin{split}
\cE = \Big\{\fB_* = \bigcup_{i=1}^\infty A_i(\fB_*), [\fC] \text{ has only bounded components}, 0 \in [\fD]^c, [\fC]^c &\sset \fR(\infty)\Big\} 
\end{split}
	\end{equation*}
	is at least $1 - cp$. Moreover, on the event $\cE$, we have $D(0, t/(2\ka))) \sset [\fR(t)]$ for all large enough $t$. More precisely, for any $t_0 > 0$ we have
	\begin{equation}
	\label{E:D0kk}
	\begin{split}
	\P(\{D(0, t/(2\ka))) &\sset [\fR(t)] \text{ for all } t \ge t_0\} \; | \; \cE) \\
	&\ge 1 - \exp\lf(\log(cp) \exp (\log (t_0/\ka)/(c\log \log (t_0/\ka)))\rg).
	\end{split}
	\end{equation}
	\end{theorem}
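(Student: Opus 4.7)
The plan is to combine Theorem \ref{T:BssetI} with the probabilistic estimates of Lemma \ref{L:largest-scale}. First, I would check that on the event $\{0 \notin [\fD]\}$ the hypotheses of Theorem \ref{T:BssetI} hold with $V = \Z^2$: assumption~1 is vacuous ($\del V = \emptyset$), assumption~3 reduces to $0 \notin [\fD]$, and assumption~2 (the stabilization $\fB_\diamond = \bigcup_{k} A_k(\fB_\diamond)$) holds almost surely since Lemma \ref{L:Ak-estimate} gives $\P(x \in \fB_{\diamond, k}) \le (cp)^{2^{k-1}} \to 0$ for each $x$, forcing $\bigcap_k \fB_{\diamond, k} = \emptyset$. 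Theorem \ref{T:BssetI} then yields $\fB(\infty) \sset [\fC]$. To upgrade this to $[\fC]^c \sset \fR(\infty)$, I would note that each component of $[\fC] \sset [\fD]$ is finite (Lemma \ref{L:largest-scale}), so blue cannot obstruct red in $[\fC]^c$; since red starts at $0 \in [\fC]^c$ and has per-edge time $\le 1$, Proposition \ref{P:distance-bds} ensures $|0 - y|_{[\fC]^c} < \infty$ for every $y \in [\fD]^c$, and the remaining points of $[\fC]^c \cap [\fD]$ are then handled by routing through nearby $[\fD]^c$ vertices. Finally, $\P(0 \in [\fD]) \le \P(M(0, 1) > 0) \le cp$ by \eqref{E:Mrrk}, giving $\P(\cE) \ge 1 - cp$.

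For the time bound, the core input is that for $y \in [\fD]^c$ the red arrival time satisfies
\[
T(y) \le |0 - y|_{[\fC]^c} \le \ga \, |0 - y| < 2 \, |0 - y|
\]
by Proposition \ref{P:distance-bds} and $\ga < 2$ from \eqref{E:gammabound}. I would introduce the event
\[
A_{n_0} = \{M(0, n) < n/2 \text{ for all integers } n \ge n_0\},
\]
whose complement has probability at most $\exp(\log(cp)\exp(\log n_0/(c\log\log n_0)))$ by \eqref{E:Mn-BC}. Setting $n_0 = \floor{t_0/(2\ka)}$ and, for $t \ge t_0$, $n = \floor{t/(2\ka)} \ge n_0$, I claim that on $\cE \cap A_{n_0}$ one has $D(0, n) \sset [\fR(t)]$.

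The verification splits on whether points lie in $[\fD]$. Every component of $[\fD]$ meeting $D(0, n)$ has diameter less than $n/2$, hence sits inside $D(0, 3n/2)$. For $y \in D(0, n) \cap [\fD]^c$, the arrival bound gives $T(y) \le 2n \le t/\ka < t$, so $y \in \fR(t)$. For $y \in D(0, n) \cap [\fD]$, let $C$ be the component of $[\fD]$ containing $y$; its outer boundary $\del^o C$ lies in $[\fD]^c \cap D(0, 3n/2 + 1)$, so every $y' \in \del^o C$ satisfies $T(y') \le 2(3n/2 + 1) < t$ using $\ka > 4000$. Hence $\del^o C \sset \fR(t)$, and since $C$ is bounded this forces the connected component of $\fR(t)^c$ containing $y$ to equal $C$, so $y \in [\fR(t)]$. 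The conditional bound \eqref{E:D0kk} then follows from $\P(A_{n_0}^c \mid \cE) \le \P(A_{n_0}^c)/\P(\cE)$, absorbing the factor $1/(1 - cp)$ into the constant inside the double exponential.

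The main technical point will be the interface analysis for $y \in [\fC] \cap D(0, n)$: one must simultaneously ensure that the $[\fD]$-components near $D(0, n)$ are small enough for red to enclose them in time, and that red actually closes up around each component rather than merely touching isolated boundary vertices. The first is exactly what the event $A_{n_0}$ provides, and the second follows from Proposition \ref{P:distance-bds}, which controls arrival times uniformly over $[\fD]^c$. Crucially, the radius $t/(2\ka)$ in the statement uses the full strength of $\ga < 2$ from \eqref{E:gammabound}; a weaker distortion constant would strictly reduce the radius achievable by this strategy.
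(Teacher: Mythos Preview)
Your argument for $\P(\cE) \ge 1 - cp$ tracks the paper's, though your route from $\fB(\infty) \sset [\fC]$ to $[\fC]^c \sset \fR(\infty)$ is more elaborate than needed: since every clock is bounded by $\ka$, each vertex is eventually coloured, so $\fR(\infty) \cup \fB(\infty) = \Z^2$ and the implication is immediate.

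For the time bound the paper takes a different and simpler route. It uses only the deterministic fact $D(0, t/\ka) \sset \fR(t) \cup \fB(t)$ (all clocks $\le \ka$); on $\cE$ this gives $D(0, t/\ka) \cap [\fC]^c \sset \fR(t)$. If $D(0, t/(2\ka)) \not\sset [\fR(t)]$ then some unbounded component of $\fR(t)^c$ crosses the annulus between radii $t/(2\ka)$ and $t/\ka$; inside this annulus $\fR(t)^c \sset [\fC]$, so $M(0, t/\ka) \ge t/(2\ka)$, and \eqref{E:Mn-BC} concludes. Your approach instead bounds arrival times via $T(y) \le |0 - y|_{[\fC]^c}$ and Proposition~\ref{P:distance-bds}. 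That inequality on $T(y)$ is fine, but the proposition is stated for $[\fC^+_k]$, and since $[\fC^+_k] \subset [\fC^+]$ the containment goes the wrong way for a direct $k \to \infty$ limit; you would need an extra local-finiteness step (e.g.\ via \eqref{E:fDk-estimate}) to pin down a finite $k$ on which the relevant geodesic lives. More importantly, your closing remark that the radius $t/(2\ka)$ ``uses the full strength of $\ga < 2$'' is incorrect: the paper's proof never invokes $\ga$ or Proposition~\ref{P:distance-bds} here at all, and the factor $2$ is just the annulus ratio needed to trap a large $[\fC]$-component.
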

	
	\begin{proof}
	First, for small enough $p$ the event $\fB_* = \bigcup_{i=1}^\infty A_i(\fB_*)$ is almost sure by Lemma \ref{L:Ak-estimate}. Second, the bound \eqref{E:Mn-BC} in Lemma \ref{L:largest-scale} ensures that $[\fD]$ (and hence $[\fC]$) has only bounded components almost surely. The second inequality in \eqref{E:Mrrk} ensures that $0 \in [\fD]^c$ with probability at least $1-cp$.
	On these events, $\fC^c \sset \fR(\infty)$ by Theorem \ref{T:BssetI} in the $V = \Z^2$ case.
	Next, observe that since all clocks take values between $0$ and $\ka$, that $D(0, t/\kappa) \sset \fR(t) \cup \fB(t)$ deterministically. Therefore the event 
	$$
	\cE \cap \{ D(0, t/(2\ka))) \not \sset [\fR(t)] \text{ for all } t \ge t_0 \}
	$$
	is contained in the event where there exists some $t > t_0$ for which a path in $\fC$ connects $D(0, t/2\kappa)$ and  $D(0, t/\kappa)$. Letting $M(0, r)$ be as in Lemma \ref{L:largest-scale}, this is in turn contained in the event
	$$
    \{\text{There exists } r \ge t_0/\ka, r \in \N : M(0, r) \ge r/2\}.
	$$
	We bound this again by \eqref{E:Mn-BC} in Lemma \ref{L:largest-scale}.
	\end{proof}

We also give a version for variable $\ka, V$.

\begin{theorem}
	\label{T:random-red-blue-var-ka}
	There exists a universal $p_0 > 0$ such that the following holds.
	Consider a random SSP $(\fR, \fB)$ on $\Z^2$ driven by potentially random clocks $X_\fR, X_\fB$ and a collection of blue seeds $\fB_*$. Suppose that the following conditions hold for some inner radius $r \ge 0$, outer radius $R \ge r$ and some $\ka_0 > 4000$.
	\begin{itemize}
		\item The parameter $\ka:E \to (0, \infty)$ satisfies $\ka > 4000$ pointwise and $\ka(u, v) \le \ka_0$ whenever $|u| > R$.
		\item Define $\fB_\clubsuit := \fB_* \cap D(0, r/2)^c$. Then $\fB_\clubsuit$ is stochastically dominated by an i.i.d.\ Bernoulli process with parameter $p \le p_0$. In the remainder of the theorem statement, the sets $\fD, \fC$ are defined with respect to the seed set $\fB_\clubsuit$.
		\item There exists a positive probability event $\cE$ such that on $\cE$, we can find a (potentially random) connected set $V \sset \Z^2$ with $D(0, r/2) \sset V^c \sset D(0, r)$ such that  $\del V \cap [\fD] = \emptyset$ and $\del V \sset \fR(\infty)$.
	\end{itemize}
Then conditionally on the event $\cE$, a.s. we have that
$$
[\fC]^c \cap V \sset \fR(\infty) \qquad \text{and all components of } \fB(\infty) \text{ are bounded}.
$$
Moreover, conditionally on $\cE$, a.s. we have that
$
D(0, t/(3 \ka_0)) \sset [\fR(t)]
$
for all large enough $t$.
\end{theorem}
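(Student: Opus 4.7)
The plan is to deduce the theorem from Theorem \ref{T:BssetI} applied with the given set $V$, and then to upgrade the resulting qualitative inclusion to linear growth by combining a deterministic speed bound with the component-diameter estimate of Lemma \ref{L:largest-scale}. Throughout, I set $\fB_\diamond := \fB_* \cap V$ and let $\fC_\diamond, \fD_\diamond$ denote the sets obtained from $\fB_\diamond$ by the same recursive construction as $\fC, \fD$. Since $V \sset D(0, r/2)^c$ we have $\fB_\diamond \sset \fB_\clubsuit$, so Lemma \ref{L:dom-seeds} gives $\fC_\diamond \sset \fC$ and $\fD_\diamond \sset \fD$.

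The first task is to verify the three hypotheses of Theorem \ref{T:BssetI} on $\cE$ for $p_0$ sufficiently small. Hypothesis 1 is given. Hypothesis 2, that $\fB_\diamond = \bigcup_k A_k(\fB_\diamond)$, holds a.s.\ by Lemma \ref{L:Ak-estimate} (applied to $\fB_\clubsuit$) combined with Lemma \ref{L:dom-seeds}. The substantive check is Hypothesis 3. First, by \eqref{E:Mn-BC} of Lemma \ref{L:largest-scale}, after shrinking $p_0$ I may assume $[\fD]$ has only bounded components a.s., which upgrades $\fD_\diamond \sset \fD$ to $[\fD_\diamond] \sset [\fD]$. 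On $\cE$, the hypothesis $\del V \cap [\fD] = \emptyset$ then yields $\del V \cap [\fD_\diamond] = \emptyset$. A short topological argument finishes the job: any connected subset of $\Z^2$ disjoint from $\del V$ is contained either in $V \smin \del V$ or in $V^c$; every component of $[\fD_\diamond]$ necessarily contains a seed of $\fB_\diamond \sset V$ (otherwise it would consist only of filled-in holes, whose boundary lies in $\fD_\diamond$); and $0 \in V^c$. Thus $[\fD_\diamond] \sset V \smin (\del V \cup \{0\})$ and Theorem \ref{T:BssetI} gives $\fB(\infty) \sset [\fC_\diamond] \cup V^c \sset [\fC] \cup V^c$. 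Since every vertex of $\Z^2$ is eventually coloured (pointwise finiteness of $\ka$ together with the finite-speed assumption), $[\fC]^c \cap V \sset \fR(\infty)$. Bounded components of $\fB(\infty)$ follow since $V^c \sset D(0, r)$ is bounded and $[\fC]$ has only bounded components a.s.

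For linear growth I start with the deterministic observation that outside $D(0, R)$ every clock is at most $\ka_0$, so summing clocks along a path through $D(0, R)$ (finite sum, since there are only finitely many edges in $D(0, R)$ and $\ka$ is pointwise finite) shows that every vertex at distance $d$ from $0$ is coloured by time at most $T_0 + d \ka_0$, where $T_0$ is an a.s.\ finite random time. Hence $D(0, t/\ka_0) \sset \fR(t) \cup \fB(t)$ for all $t \ge T_0 + R\ka_0$. Now suppose, for contradiction, that for some large $t$ there is $u \in D(0, t/(3\ka_0)) \smin [\fR(t)]$. The unbounded component $K$ of $\fR(t)^c$ through $u$ must cross the annulus $A(0, t/(3\ka_0), t/\ka_0)$, which for $t > 3\ka_0 r$ is disjoint from $V^c$, and on which $\fR(t)^c \sset \fB(t) \sset [\fC] \cup V^c = [\fC]$. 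Hence some component of $[\fC]$ that intersects $D(0, t/\ka_0)$ has diameter at least $2t/(3\ka_0) > t/(2\ka_0)$. But \eqref{E:Mn-BC} applied to the $\fC$-analogue of $M$ shows that a.s.\ there exists a random $n_0 < \infty$ with $M(0,n) < n/2$ for all $n \ge n_0$, a contradiction once $t/\ka_0 \ge n_0$. Hence $D(0, t/(3\ka_0)) \sset [\fR(t)]$ for all sufficiently large $t$.

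The main obstacle is the verification of Hypothesis 3 of Theorem \ref{T:BssetI}: the hypothesis of the present theorem controls $[\fD]$ only near $\del V$, while Theorem \ref{T:BssetI} asks for a containment involving the smaller $[\fD_\diamond]$ relative to $V \smin (\del V \cup \{0\})$. Transferring one to the other requires the monotonicity of Lemma \ref{L:dom-seeds}, the a.s.\ boundedness of $[\fD]$-components from Lemma \ref{L:largest-scale} (so that filling holes is compatible with the inclusion), and the topological argument above. Once Hypothesis 3 is in place, everything else is routine combination of the deterministic theory of Section \ref{S:SSP} with the a.s.\ estimates from Lemma \ref{L:largest-scale}.
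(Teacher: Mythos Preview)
Your proposal follows the same route as the paper: reduce to Theorem \ref{T:BssetI} via Lemma \ref{L:dom-seeds}, then use Lemma \ref{L:largest-scale} to convert the qualitative containment into linear growth. In fact you are more careful than the paper in verifying Hypothesis 3 of Theorem \ref{T:BssetI}; the paper's one-line ``the third condition is guaranteed by the third assumption'' implicitly relies on exactly the topological observation you spell out (each component of $[\fD_\diamond]$ contains a seed in $V$, hence lies in $V \smin \del V$ once $\del V \cap [\fD] = \emptyset$).

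There is one small quantitative slip in the linear-growth step. From ``every vertex at distance $d$ is coloured by time $T_0 + d\ka_0$'' you cannot conclude $D(0, t/\ka_0) \sset \fR(t) \cup \fB(t)$; setting $d = t/\ka_0$ gives colouring time $T_0 + t > t$. What you actually get is $D(0, (t - T_0)/\ka_0) \sset \fR(t) \cup \fB(t)$, which for large $t$ contains $D(0, t/(2\ka_0))$. Running your annulus-crossing argument with outer radius $t/(2\ka_0)$ instead of $t/\ka_0$ (so the annulus has width $t/(6\ka_0)$) fixes this without any other change; Lemma \ref{L:largest-scale} still rules out such crossings for large $t$. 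The paper handles the same issue by noting that for any $\ep > 0$ one has $D(0, t(1-\ep)/\ka_0) \sset [\fR(t) \cup \fB(t)]$ for large $t$.
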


\begin{proof}
Again, this follows from Theorem \ref{T:BssetI}. The first condition of Theorem \ref{T:BssetI} is guaranteed by assumption. For the second and third conditions, by the monotonicity established in Lemma \ref{L:dom-seeds}, it suffices to show that these statements hold for $\fB_\clubsuit$, rather than the smaller $\fB_\diamond := \fB_* \cap V$. The second statement then follows by the same reasoning as in Theorem \ref{T:random-red-blue}, and the third condition is guaranteed by the third assumption.

The boundedness of the components of $\fB(\infty)$ and the claim about $D(0, t/(3 \ka_0)) \sset [\fR(t)]$ uses the essentially the same reasoning as in Theorem \ref{T:random-red-blue}. The only difference is that since $\ka$ is not constant but only eventually bounded above by $\ka_0$, we can only guarantee that for any $\ep > 0$, 
$$
D(0, t(1- \ep)/\kappa_0) \sset [\fR(t) \cup \fB(t)]
$$
for all large enough $t$.
\end{proof}

\subsection{Interacting pairs}
\label{SS:interacting-pairs}

To prove the almost sure lower bound on the speed in Theorem \ref{thm:main}, we will need to analyze pairs of random SSPs defined at different scales. Suppose we have two potentially random SSPs $(\fR, \fB), (\fR', \fB')$ defined on two copies of $\Z^2$, labelled $\Z^2, \Z^{2 \prime}$. We define a map $P:\Z^2 \to \Z^{2 \prime}$ by setting $P(x, y) = (\floor{x/2}, \floor{y/2})$. We can think of this map as identifying $2 \times 2$ boxes in $\Z^2$ with a coarser lattice $\Z^{2 \prime}$.

Suppose that $(\fR, \fB), (\fR', \fB')$ interact within a disk of radius $r$ according to the following rule:
\begin{itemize}
	\item For $x \in D(0, r)$, if $P(x) \in \fR'(\infty)$, then either $x \in \fR(\infty)$, or else $x$ is a blue seed for $(\fR, \fB)$.
\end{itemize}
The idea is that we can use this rule to propagate survival of the process $\fR'$ to the process $\fR$.

With this rule, the following result is immediate.

\begin{lemma}
	\label{L:interacting-RB}
	In the setup of Theorem \ref{T:random-red-blue-var-ka}, suppose that the first two conditions hold. Suppose also that $(\fR, \fB)$ is coupled to another process $(\fR', \fB')$ so that on some event $\cE$, the interaction rule above holds, and there exists a connected (random) set $V$ such that 
	$$
	D(0, r/3) \sset V^c \sset D(0, r), \qquad P \del V \sset \fR'(\infty), \qquad \del V \cap [\fD] = \emptyset,
	$$ where $\fD$ is constructed as in Theorem \ref{T:random-red-blue-var-ka}. Then the final condition of Theorem \ref{T:random-red-blue-var-ka} also holds with the event $\cE$ and the random set $V$, and hence so does that theorem.
\end{lemma}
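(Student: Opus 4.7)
The plan is to verify the third hypothesis of Theorem \ref{T:random-red-blue-var-ka} for the given random set $V$ and event $\cE$, after which all three conclusions of the present lemma follow by a direct appeal to that theorem. The first two hypotheses of Theorem \ref{T:random-red-blue-var-ka} are already part of the present setup, and the geometric containments on $V^c$ as well as the disjointness $\del V \cap [\fD] = \emptyset$ are provided by the coupling, so the only substantive task will be to upgrade $P \del V \sset \fR'(\infty)$ into $\del V \sset \fR(\infty)$.

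I would carry this out pointwise. For each $x \in \del V$, the inclusion $V^c \sset D(0, r)$ forces $x \in D(0, r)$, so the interaction rule applies. Combining the rule with $P(x) \in \fR'(\infty)$ produces the dichotomy that either $x \in \fR(\infty)$, or $x$ is a blue seed of $(\fR, \fB)$. The crux of the argument is ruling out the second alternative.

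The main obstacle will be precisely this blue-seed alternative. For it I would appeal to Lemma \ref{L:Ak-estimate}: for $p \le p_0$ sufficiently small, $\fB_\clubsuit = \bigcup_{k \ge 1} A_k(\fB_\clubsuit)$ almost surely, whence each such seed lies in some $D(A_k(\fB_\clubsuit), r_k/100) \sset \fD$ and consequently $\fB_\clubsuit \sset [\fD]$. The hypothesis $\del V \cap [\fD] = \emptyset$ then yields $\del V \cap \fB_\clubsuit = \emptyset$, while the residual seeds $\fB_* \smin \fB_\clubsuit \sset D(0, r/2) \sset V^c$ cannot meet $\del V \sset V$ either. Hence no $x \in \del V$ is a blue seed of $(\fR,\fB)$, the dichotomy collapses to $x \in \fR(\infty)$, and a direct invocation of Theorem \ref{T:random-red-blue-var-ka} yields the lemma.
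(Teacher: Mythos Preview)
Your argument is essentially the intended one; the paper itself calls this result ``immediate'' and offers no further detail, so your elaboration—use the interaction rule on $\del V$, then rule out the blue-seed alternative via $\fB_\clubsuit \sset \fD \sset [\fD]$ and $\del V \cap [\fD] = \emptyset$—is exactly what is meant.

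Two small technical slips are worth noting. First, from $V^c \sset D(0,r)$ you only get $\del V \sset D(0,r+1)$, not $D(0,r)$, since $\del V$ consists of points of $V$ adjacent to $V^c$; this is harmless in the application but your sentence as written overstates the containment. Second, to dispose of the residual seeds $\fB_* \smin \fB_\clubsuit \sset D(0,r/2)$ you invoke $D(0,r/2) \sset V^c$, whereas the lemma's hypothesis only gives $D(0,r/3) \sset V^c$. This is a mismatch in the paper's own statements (Theorem~\ref{T:random-red-blue-var-ka} requires $r/2$ while the lemma offers $r/3$); in the actual application (Theorem~\ref{T:linear-speed-RB-process}) one has $D(0,2r/3) \sset V^c$, so both constraints are met, but your write-up should flag this rather than silently assume the stronger inclusion.
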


To apply Lemma \ref{L:interacting-RB} in conjunction with Theorem \ref{T:random-red-blue-var-ka}, we will need a few lemmas to help us find sets $V$. 

\begin{lemma}
		\label{L:two-seed-sets}
		In the setup above, let $\fD_k, \fD'_k$ be as defined at the beginning of Section \ref{S:inputs}. Then for all $k$, all components of the set $\fF_k := P \fD_k \cup \fD_k'$ have diameter at most $r_k$.
	\end{lemma}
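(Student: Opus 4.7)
The plan is to mirror the inductive structure of Lemma \ref{L:IJlemma}, working in $\Z^{2\prime}$ on a convenient overset $\hat{\fD}_k \supseteq \fF_k$. The crucial preliminary geometric fact is that $P$ is $1$-Lipschitz with the reverse estimate
\[
|Px - Py| \ge |x-y|/2 - 1 \qquad \text{for all } x, y \in \Z^2,
\]
which follows from the per-coordinate bound $|\lfloor a/2 \rfloor - \lfloor b/2 \rfloor| \ge (|a-b|-1)/2$. Consequently, the separation structure of $A_i(\fB_\diamond)$ (every two points of $\fB_{\diamond, i}$ are either within $r_{i-1}$ or at distance $\ge r_i/3$) transfers to $PA_i(\fB_\diamond)$: any two of its points are either within $r_{i-1}$ or at distance $\ge r_i/6 - 1$ in $\Z^{2\prime}$. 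Since also $PD(A, r) \sset D(PA, r)$, setting $\hat A_i := PA_i(\fB_\diamond) \cup A_i(\fB_\diamond')$ and $\hat{\fD}_k := \bigcup_{i=1}^k D(\hat A_i, r_i/100)$, we have $\fF_k \sset \hat{\fD}_k$, and it suffices to bound diameters of components of $\hat{\fD}_k$ by $r_k$.

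I would proceed by induction on $k$, with $k=0$ vacuous. For a component $\hat C$ of $\hat{\fD}_k$, decompose $\hat C = \hat C^{(k)} \cup \hat C^{(<k)}$ with $\hat C^{(k)} := \hat C \cap D(\hat A_k, r_k/100)$ and $\hat C^{(<k)} := \hat C \cap \hat{\fD}_{k-1}$. Every component of $\hat C^{(<k)}$ is a whole component of $\hat{\fD}_{k-1}$ (since $\hat C$ is a $\hat{\fD}_k$-component), so the inductive hypothesis bounds its diameter by $r_{k-1}$. Arguing exactly as in Lemma \ref{L:IJlemma}, $\hat C^{(k)}$ cannot contain two distinct clusters of $PA_k(\fB_\diamond)$, nor two distinct clusters of $A_k(\fB_\diamond')$: such a pair would be bridged by a $\hat C^{(<k)}$-component of diameter $\le r_{k-1}$, forcing the cluster-centers $x_1, x_2$ into the annulus $A(x_1, r_{k-1}, r_k/50 + 2 + r_{k-1})$; but the within-species separations $\ge r_k/6 - 1$ (for $PA_k(\fB_\diamond)$) and $\ge r_k/3$ (for $A_k(\fB_\diamond')$), combined with $r_k \ge 10^{12} r_{k-1}$, make this annulus void of other same-species cluster centers.

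Cross-species pairs have no a priori separation, so $\hat C^{(k)}$ may still contain at most one cluster from each species, potentially with their $r_k/100$-blobs merging or bridged through a $\hat C^{(<k)}$-component; in either case $\text{diam}(\hat C^{(k)}) \le 3r_{k-1} + r_k/25$. Every component of $\hat C^{(<k)}$ touches $\hat C^{(k)}$ and has diameter at most $r_{k-1}$, so $\hat C \sset D(\hat C^{(k)}, r_{k-1}+1)$, whence $\text{diam}(\hat C) \le r_k/25 + O(r_{k-1}) \le r_k$ using $r_k \ge 10^{12} r_{k-1}$. The main technical obstacle is precisely the cross-species interaction, but it is tamed by the fact that the blob radius $r_k/100$ is much smaller than the within-species separation $r_k/6$, so no chain of more than one cluster of a single species can assemble inside $\hat C^{(k)}$.
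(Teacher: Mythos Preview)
Your approach is essentially the same as the paper's: both induct on $k$, decompose a component into its level-$k$ piece and its $\fF_{k-1}$-piece, and argue that the level-$k$ piece meets at most one cluster of each ``species'' before summing diameters. Your device of passing to the overset $\hat{\fD}_k$ is a harmless simplification.

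There is one step that is understated. When you assert that two same-species clusters in $\hat C^{(k)}$ ``would be bridged by a $\hat C^{(<k)}$-component of diameter $\le r_{k-1}$,'' this is not literally guaranteed: the bridge may pass through a cross-species level-$k$ blob, which has diameter of order $r_k/50$, not $r_{k-1}$. Your final sentence contains the right remedy---because the blob radius $r_k/100$ is far below the within-species separation ($r_k/6-1$ for $PA_k$, $r_k/3$ for $A_k'$), any chain of $r_k/50$-adjacent seeds in $\hat A_k$ can visit at most one $PA_k$-cluster and one $A_k'$-cluster---but this chain argument should replace (not follow) the ``bridged by a $\hat C^{(<k)}$-component'' claim. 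Once you know each component of $D(\hat A_k, r_k/100)$ itself carries at most one cluster of each species, the bridging through $\hat C^{(<k)}$ then genuinely only involves $\hat{\fD}_{k-1}$-pieces, and the annulus argument goes through. The paper's proof is equally brief on this point and relies on the reader adapting the Lemma~\ref{L:IJlemma} argument.
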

	
	\begin{proof}
		The proof is similar to the proof of Lemma \ref{L:IJlemma}. We go through the inductive argument more briefly this time through. Again, we use induction on $k$. The definition of $A_1(\fB_\diamond)$ guarantees that any component of $\fF_1$ will intersect at most one component of $P\fD_1$ and one component of $\fD_1'$. Any component of $P\fD_1$ is a ball of radius $r_1/200$ and any component of $\fD_1'$ is a ball of radius $r_1/100$. Therefore components of $\fF_1$ have radius less than $r_1$.  Now suppose that the claim holds for $\fF_{k-1}$, and let $C$ be a connected component of $\fF_k$. 
		
		As in the proof of Lemma \ref{L:IJlemma}, we can use the inductive hypothesis to check that $C$ overlaps with at most one component $F'$ of $D(A_k(\fB_\diamond'), r_k/100)$ and at most one component $F$ of $P D(A_k(\fB_\diamond), r_k/100)$, each of which have diameter at most $r_k/10$. Attaching $F \cup F'$ to connected components of $\fF_{k-1}$ can only increase the total diameter by at most $2 r_{k-1}$, by the inductive hypothesis, so $\operatorname{diam}(C) \le r_k/10 + 2 r_{k-1} \le r_k$.
	\end{proof}
	
	Using Lemma \ref{L:two-seed-sets}, we can get an analogue of Lemma \ref{L:largest-scale} for the sets $\fF_k$. We only record the parts of that lemma that we will need moving forward.
	
	\begin{lemma}
		\label{L:random-two-seed-sets}
		Suppose that $\fB_\diamond, \fB_\diamond'$ are blue seed sets that are both stochastically dominated by i.i.d.\ Bernoulli processes of mean $p > 0$. Let $\fF := P \fD \cup \fD'$. Let $M(x, r)$  be the diameter of the largest component of $[\fF]$ that intersects $D(x, r)$. For some absolute $c > 0$, we have
			\begin{equation}
			\label{E:Mrrk'}
			\P\lf( M(x, r) > r_k \rg) \le r^2 (cp)^{2^{k-1}}.
			\end{equation}
			Moreover, letting $P_{a, b}$ denote the probability that there is a path in $[\fF]$ from $D(0, a)$ to $D(0, a + b)^c$, then for $a \ge c$ and $b \ge \exp((\log \log a)^2)$, we have 
			\begin{equation}
			\label{E:Pab-bound}
			P_{a,b} \le \exp \lf( \log (cp) \exp(\log b /(c \log \log b)\rg).
			\end{equation}
		\end{lemma}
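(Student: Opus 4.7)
The plan is to mirror the proof of Lemma~\ref{L:largest-scale}, replacing Lemma~\ref{L:IJlemma} with the two-process analogue Lemma~\ref{L:two-seed-sets}. The key structural input is that components of $[\fF_k]$ have diameter at most $r_k$ and $[\fF] = \bigcup_{k \ge 1}[\fF_k]$ (since each bounded component of $\fF^c$ is bounded in $\fF_k^c$ for all $k$ large enough). Hence $M(x,r)$ is controlled by the largest $r_K$ at which a scale-$K$ contribution meets $D(x,r)$.

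For the first inequality, the event $\{M(x,r) > r_k\}$ forces, for some $j > k$, a component of $[\fF_j]$ that is not already a component of $[\fF_{j-1}]$ to meet $D(x,r)$. Such a new component must contain a point of $A_j(\fB_\diamond')$ or of $P(A_j(\fB_\diamond))$, and by the Lemma~\ref{L:two-seed-sets} diameter bound, any such seed lies within distance $r_j$ of $D(x,r)$. A pointwise application of Lemma~\ref{L:Ak-estimate} (noting that taking preimages under $P$ increases set sizes by only a bounded factor) combined with a union bound of size $O((r+r_j)^2)$ for each seed type, summed over $j > k$, gives
\begin{equation*}
\P(M(x,r) > r_k) \;\le\; C\sum_{j > k}(r+r_j)^2 (cp)^{2^{j-1}}.
\end{equation*}
Because $r_j \le \exp(2j\log j)$ grows much more slowly than $(cp)^{-2^{j-1}}$, the tail is dominated by its first term and collapses to $r^2(cp)^{2^{k-1}}$ after enlarging $c$.

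For the second inequality, any $[\fF]$-path from $D(0,a)$ to $D(0,a+b)^c$ lies inside a component of $[\fF]$ that meets $D(0,a)$ and has diameter at least $b$, so $P_{a,b} \le \P(M(0,a) \ge b)$. Choosing $k$ maximal with $r_k < b$ and applying the first inequality yields $P_{a,b} \le a^2(cp)^{2^{k-1}}$. The relation $r_n \asymp (n!)^2$ from \eqref{E:ri-relationship}, together with $b \le r_{k+1}$, implies $k\log k \gtrsim \log b$, and hence $2^{k-1} \ge \exp(\log b/(c'\log \log b))$ for some $c'$. The hypothesis $b \ge \exp((\log \log a)^2)$ sharpens this to $k\log k \gtrsim (\log \log a)^2$, which forces $k \gg \log \log a$, so $2^{k-1}$ also dwarfs $\log a / |\log(cp)|$. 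Substituting these two estimates into $\log P_{a,b} \le 2 \log a + 2^{k-1}\log(cp)$ and absorbing the $2\log a$ term into a slight increase of $c'$ delivers the advertised bound. The delicate point, and what I expect to be the main obstacle, is precisely this constant-chasing: verifying that the polynomial $a^2$ prefactor can be absorbed into the doubly-exponential-in-$b$ tail uniformly over the admissible range, even when $a$ is very large and $b$ only narrowly satisfies its hypothesis. No new probabilistic input beyond Lemmas~\ref{L:Ak-estimate} and~\ref{L:two-seed-sets} should be required.
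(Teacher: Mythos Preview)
Your proposal is correct and follows essentially the same approach as the paper. The paper's own proof is extremely terse---it simply says the first inequality is proved ``exactly the same'' as \eqref{E:Mrrk} in Lemma~\ref{L:largest-scale}, and for \eqref{E:Pab-bound} invokes $M(0,a)\ge b$, the bound $r_k\le c\exp(2k\log k)$, and \eqref{E:Mrrk'}---so you have in fact supplied more detail than the paper does, including the explicit check that the $a^2$ prefactor is absorbed under the hypothesis $b\ge\exp((\log\log a)^2)$.
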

	
	\begin{proof}
	The proof of \eqref{E:Mrrk'} is exactly the same as the proof of \eqref{E:Mrrk} in Lemma \ref{L:largest-scale}. For the bound on $P_{a, b}$, we use the bound \eqref{E:Mrrk'} on $M(0, a)$, the bound $r_k \le c \exp (2k \log k)$, and the fact that the event for $P_{a, b}$ implies $M(0, a) \ge b$.
	\end{proof}
	
\section{Linear growth in dimension $d = 2$}
In this section we define carefully constructed events in order to apply SSP to the SI model.

\label{S:linear}
\subsection{A block construction for the SI process}
\label{S:SI-colouring}

If we change the rates of the susceptible and infected  particles to $\theta D_S$ and $\theta D_I$ then it effectively speeds up the process by a time factor of $\theta$.  Thus by a suitable rescaling, we can without loss of generality assume that $D_S=1$ and let $\beta=D_I$.

Fix a side length $L = 2^k$ for some large $k \in \N$. We subdivide $\Z^2$ into a  collection of  dyadic blocks
\begin{equation}
\label{E:BL-construct}
\sB=\sB_L:=\Big\{zL +\big\{-L/2, \ldots ,L/2-1\big\}^2:z\in \Z^2\Big\}.
\end{equation}
This induces a natural map $f = f_L:\Z^2 \to \sB_L$.
Via a somewhat complicated construction, we will apply Theorems~\ref{T:random-red-blue} and \ref{T:random-red-blue-var-ka} to show that the infection grows linearly. Given a side length $L$, for some small $\alpha>0$, let $\xi = \xi_L = \alpha L^2$. This will be the minimum speed at which the colouring of blocks spreads for blocks far away from the origin. 

We will now define a \emph{colouring process} for blocks $B \in \sB_L$. The colouring process will depend on a slow initial speed parameter $\chi \ge 8 \xi$, a small radius parameter $r \ge 0$ and a large radius parameter $R \ge 2 r \chi/\xi$.

\begin{definition}
	\label{D:SI-colouring}
We will let $\tau_B$ denote the (stopping) time when a block $B$ is first coloured. For a vertex $u\in B$ we will use $\tau_u$ as shorthand for $\tau_B$. A block $B$ becomes coloured at time $t$ if one of the following events happens:
\begin{enumerate}[label=(\alph*)]
\item A neighbouring block $B'$ was coloured at time $\tau_{B'} = t - \chi$ and $d(B, 0) \le r$, \textbf{or} a neighbouring block $B'$ was coloured at time $\tau_{B'} = t - 8\xi$ and $R \ge d(B, 0) > r$, \textbf{or} a neighbouring block $B'$ was coloured at time $\tau_{B'} = t - \xi$ and $d(B, 0) > R$.
\item An infected particle enters $B$ for the first time at time $t$.
\item A block $B'$ becomes coloured according to rule (b) at time~$t$ and the infected particle that enters $B'$ is within distance  $\alpha L$ of~$B$.
\end{enumerate}
\end{definition}
In case (c) we call this a multi-colouring, where up to three blocks may be coloured simultaneously. 
In case (b) we call the infected particle that entered $B$ the \emph{ignition particle} of $B$.  In case (c) the ignition particle is the ignition particle from block $B'$. In cases (b) and (c) we say that the block $B$ is \emph{ignited} at the location $x$ where an infected particle first entered $B$ (for case (b)) or first entered the relevant neighbour of $B$ (for case (c)). All ignition locations are within distance $\al L$ of $B$.

Moving forward, we say that a particle $a$ is \emph{coloured} the first time that it is in a coloured block, either because the block was coloured or because it entered a coloured block. We let $X_t^*$ denote the process of coloured particles up to time $t$ and let $\cF_t^*$ denote the $\sigma$-algebra generated by $X_t^*$. We let $\iota_a$ be the stopping time when particle $a$ becomes infected.

A key element of our analysis is to regard the collection of random walks as given by a Poisson process on $\fW$, the space of cadlag sample paths $w(t):\R\to\Z^2$ such that 
\begin{equation}\label{eq:sublinearGrowth}
\frac1{t}|w(t)|\to 0
\end{equation}
as $|t| \to \infty$.  Elements of $\fW$ will represent the trajectories of particles.
Let $\sW_u$ denote the measure on $\fW$ given by a continuous time random walk on $\Z^2$ over all time $t\in \R$ that is at $u\in \Z^2$ at time $0$. Note that a simple random walk satisfies~\eqref{eq:sublinearGrowth} almost surely.  Furthermore, let $\sW = \sum_{u\in \Z^2} \sW_u$.  We let $\sP$ denote a Poisson process on $\fW$ with intensity measure $\mu\sW$.  If we remove the initial infected particle from the origin, we can interpret all remaining particles and their trajectories as being given by a sample from $\sP$.  

In order to obtain spatial independence of various events we will give an alternative construction of the SI process $X_t$ with the same law based on a collection of Poisson processes $\sP_B, B \in \sB$ which are IID and equal in distribution to $\sP$.  We will use these processes to construct $X_t$ as follows.  In Section~\ref{s:coupling} we will prove that this gives a valid coupling.

Let $\cM_B$ denote the $\sigma$-algebra generated by 
\begin{enumerate}[label=(\roman*)]
	\item the independent Poisson process $\sP_B$,
	\item a walk $W_{B,\operatorname{ig}}$ sampled independently from $\sW_0$, which will encode the trajectory of an ignition particle. 
\end{enumerate}
For each block $B$ the processes $\cM_B$ are IID.  We call a particle $a$ in $\sP_B$ \emph{simple} and let $a(t)$ denote its trajectory.

We will build $X_t$ according to the $\cM_B$ in such a way that particles coloured in $B$ correspond to particles in $\sP_B$ but with a time shift of length $\tau_B$. We will abuse notation somewhat by conflating a particle $a$ in some $\sP_B$ with a particle in $X_t$ matched according to our construction. Defining
\begin{equation}
\label{E:atti}
\oa(t,t',i):=\begin{cases} a(t-t') &t\leq i\\
a(i-t'+\beta(t-i)) &t>i,
\end{cases}
\end{equation}
the trajectory of $a$ in $X_t$ will be given by $\oa(t,\tau_B,\iota_a)$, which we shorten to $\oa(t)$ when clear. This definition incorporates the change in speed after the infection time $\iota_a$. This means that prior to infection, a particle's location in $X_t$ at time $t$ corresponds to its location at time $t-\tau_B$ in $\sP_B$.  Thus, the particle locations in $\sP_B$ at time $0$ tell us the particle locations in $X_t$ at time $\tau_B$.

Let $B_0$ be the block containing the origin so that $\tau_{B_0}=0$.  At time $0$ we add all simple $\sP_{B_0}$-particles that are in $B_0$ at time $0$ to $X_t^*$ plus an infected ignition particle at the origin. The simple particles evolve according to their paths $a(t)$ while the initially infected particle evolves according to $W_{B_0, \operatorname{ig}}(t)$. Particles become infected if they enter the same vertex as another infected particle after which they move along the path $a(t)$ at rate $\beta$, as in \eqref{E:atti}.  New particles can enter the coloured collection of particles $X_t^*$ in two ways, either when a new block becomes coloured or when an uncoloured particle enters a coloured block for the first time.

{\bf Case 1:  A newly coloured block:} When a block $B$ is coloured for the first time according to Definition \ref{D:SI-colouring} we add  particles to $X_t^*$ as follows.  If a particle $a$ from $\sP_B$ is in $B$ at time $0$, then we add it to $X_t^*$ at time $t=\tau_B$ if for all $0\leq t < \tau_B$  we have that $\tau_{a(t-\tau_B)} > t$.  This condition is equivalent to saying that a particle with trajectory $a(t-\tau_B)$ first hits a coloured block at time $\tau_B$.  For $t\geq \tau_B$ the future trajectory of the particle is given by $\oa(t)$.

If $B$ is ignited at a vertex $x \in \del B$, this ignition particle follows special rules.  Instead of continuing to follow the trajectory given by the block it was initially coloured by, when it becomes the ignition particle of $B$ at time $\tau_B$ we alter its future trajectory to $x+ W_{B,\operatorname{ig}}(\beta(t-\tau_B))$.

{\bf Case 2:  Particle first entering a coloured block:} For times $t\in(\tau_B,\tau_B+\chi]$ new particles are revealed in $B$ in the process $X_t^*$ according to the following rules.  If $a$ is a particle in $\sP_B$ that enters $B$ at time $t-\tau_B$, then we add it to $X_t^*$ at time $t$ if for all $0\leq s < t$  we have that $\tau_{a(s-\tau_B)} > t$.  This condition is equivalent to saying that a particle with trajectory $a(s-\tau_B)$ first hits a coloured block at time $t$.  For $s\geq t$ the future trajectory of the particle is given by $\oa(s)$.  Particles can only join $X^*_t$ in this way during the time interval $(\tau_B,\tau_B+\chi]$ since at time $\tau_B+\chi$ the neighbouring blocks of $B$ are already coloured by construction.


Let $H_B$ be the set of all the particles that are first coloured in $B$. For blocks close to the origin, our colouring rules do not guarantee that $H_B$ has to contain many particles. However, at distances at least $r\chi/\xi$ from the origin, rule (a) guarantees that our colouring process is moving at a linear speed depending only on $L$, which helps to ensure that there are many particles in $H_B$. With this in mind, define $H^-_B$ to be all particles $a\in \sP_B$ that satisfy the following additional constraint:
\begin{equation}
\label{eq:principal}
\sup_{t\leq 0} d(a(t),B) - \frac{L}{20} \lfloor |t\xi^{-1}| \rfloor = 0.
\end{equation}
We call the particles in $H_B^-$ \emph{principal particles}. 

\begin{lemma}
\label{L:principal-particles}
For $d(0, B) \ge r\chi/\xi$, $H_B^- \subset H_B$.
\end{lemma}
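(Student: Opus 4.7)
The plan is to argue by contradiction via a speed comparison. The $t = 0$ case of the principal condition \eqref{eq:principal} forces $a(0) \in B$, so by Case~1 of the construction in Section~\ref{S:SI-colouring} it suffices to verify that $\tau_{f(a(-s))} > \tau_B - s$ for every $s \in (0, \tau_B]$. The key observation is that the principal condition bounds the particle's backward drift away from $B$ by $(L/20)\lfloor s/\xi \rfloor$, i.e., at rate at most $L/(20\xi)$, while rule~(a) of Definition~\ref{D:SI-colouring} forces the colouring to propagate between adjacent blocks outside $D(0, r)$ in time at most $8\xi$, i.e., at rate $L/(8\xi)$. Since the colouring speed strictly exceeds the particle drift rate, the particle cannot lie in a block that was already coloured.

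To carry this out, suppose for contradiction that $\tau_{B''} \le \tau_B - s$ for some $s \in (0, \tau_B]$, where $B'' := f(a(-s))$. Since $a(-s) \in B''$, the principal condition gives $d(B'', B) \le d(a(-s), B) \le (L/20)\lfloor s/\xi \rfloor$, which bounds the block-graph distance $k$ between $B''$ and $B$ by $k \le s/(20\xi) + O(1)$. The hypothesis $d(0, B) \ge r\chi/\xi \ge 8r$ (using $\chi \ge 8\xi$) ensures that every block within graph distance $k$ of $B$ lies outside $D(0, r)$, so iterating rule~(a) along a shortest block path from $B''$ to $B$ yields $\tau_B \le \tau_{B''} + 8\xi k$. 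Combining with $\tau_B - \tau_{B''} \ge s$ produces
\[
s \le 8\xi k \le 8\xi\bigl( s/(20\xi) + O(1) \bigr) = 2s/5 + O(\xi),
\]
which forces $s$ into a bounded $O(\xi)$ window and delivers a contradiction once $s$ exceeds a fixed constant multiple of $\xi$.

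For the residual small-$s$ window I would handle cases directly. When $s < \xi$, the floor $\lfloor s/\xi \rfloor$ vanishes and the principal condition gives $a(-s) \in B$, so $B'' = B$ and the required inequality is immediate. For the remaining $s \ge \xi$ in the leftover window, a case analysis exploiting the integer-valued $d(a(-s), B)$ together with the exact min-Euclidean-distance versus block-graph-distance correspondence refines the bound on $k$ enough to close the argument.

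The main obstacle is precisely this narrow window $s \in [\xi, 8\xi]$, in which the principal condition first permits $a(-s)$ to leave $B$ but rule~(a) only produces the weak bound $\tau_B \le \tau_{B''} + 8\xi$ for adjacent blocks. Making the speed comparison strict in this regime requires careful bookkeeping of the constants $L/20$ and $8\xi$ against the discrete block structure, and is where most of the technical content of the proof lies.
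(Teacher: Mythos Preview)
Your overall strategy matches the paper's: contradict $H_B^- \not\subset H_B$ by comparing the principal particle's backward drift rate $L/(20\xi)$ to the colouring's spread rate. But there is a genuine gap. You assert that ``$d(0,B) \ge r\chi/\xi \ge 8r$ ensures that every block within graph distance $k$ of $B$ lies outside $D(0,r)$'', yet this requires $kL$ to be small relative to $d(0,B)$, and your only bound on $k$ is $k \le s/(20\xi)+O(1)$ with $s$ potentially as large as $\tau_B$. Nothing in your argument controls $\tau_B$. The paper supplies exactly this missing step: iterating rule~(a) from the origin gives
\[
\tau_B \le \frac{\chi r}{L} + \frac{8\xi\, d(0,B)}{L} \le \frac{9\xi\, d(0,B)}{L},
\]
so $d(B',B) \le \frac{L}{20}\lfloor \tau_B/\xi\rfloor \le d(0,B)/2$. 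This forces the block path from $B'$ to $B$ to remain at distance $\ge d(0,B)/2 \ge 4r > r$ from the origin, after which the $8\xi$-per-step bound is justified. Without this a~priori control on $\tau_B$, your speed comparison is circular.

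Second, the ``main obstacle'' you identify --- the residual window coming from the $O(1)$ slack in your block-distance estimate --- simply does not appear in the paper's proof. The paper reads off directly that $B$ and $B'$ are at most $\tfrac{1}{20}\lfloor(\tau_B-s)/\xi\rfloor$ blocks apart, and then the single line
\[
\tau_{B'} \ge \tau_B - \tfrac{8\xi}{20}\lfloor(\tau_B-s)/\xi\rfloor > s
\]
(valid for every $s<\tau_B$ because $\tfrac{8}{20}\lfloor x\rfloor \le \tfrac{2}{5}x < x$ for $x>0$) finishes the argument. There is no case analysis, no leftover window, and no ``careful bookkeeping'' of constants; what you flag as the location of most of the technical content is, in the paper's treatment, an immediate consequence of $8/20<1$.
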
 

\begin{proof}
A principal particle $a\in \sP_B$ must have been present in $B$ at time 0.  Furthermore, suppose that $\tau_{a(s-\tau_B)} \leq s$ for some $0\leq s <\tau_B$.  Then if $a(s - \tau_B )\in B'$ then \begin{equation}
\label{E:dBB'L}
d(B',B)\leq \frac{L}{20} \lfloor |s - \tau_B|\xi^{-1}\rfloor
\end{equation}
by equation~\eqref{eq:principal}. Moreover, the colouring rule in Definition \ref{D:SI-colouring}(a) and the assumption that $d(0, B) \ge r\chi/\xi$ guarantees that 
$$
\tau_B \le \frac{\chi r}{L} + \frac{8 \xi d(0, B)}{L} \le \frac{9 \xi d(0, B)}{L},
$$
so we further have that $d(B',B)\leq d(B, 0)/2$. In particular, since $d(0, B) \ge r \chi / \xi \ge 8r$, this implies that the shortest path of blocks from $B'$ to $B$ does not hit any block within distance $r$ of the origin. Along this path, all colouring times of adjacent blocks are spaced apart by at most $8\xi$. Moreover, \eqref{E:dBB'L} implies that $B, B'$ are at most $\frac{1}{20} \lfloor |s - \tau_B|\xi^{-1}\rfloor$ blocks apart, and so $\tau_{B'} \ge \tau_B - \frac{8\xi}{20} \lfloor |s - \tau_B|\xi^{-1}\rfloor > s$, which is a contradiction.
\end{proof}

\subsection{Blue Seeds}
\label{S:blue-seeds}

In this section we give a set of conditions on $\cM_B$ which will ensure the efficient spread of the infection to the neighbouring blocks if the block $B$ is ignited, i.e. coloured according to rule (b) or (c).  The complement of this event will correspond to marking the block as a blue seed in an associated SSP. For this section and the remainder of Section \ref{S:linear}, we fix an arbitrary $\ka_0 > 4000$.

Setting some notation, let $U_x$ denote the union of all blocks in $\sB_L$ within distance $\alpha L$ of $x$. This includes the block containing $x$. When the meaning is clear from context, we will also let $U_x$ denote the set of vertices contained in a block in $U_x$.

 Furthermore, if $x$ is the ignition site for the block $B$, all blocks in $U_x$ are coloured at or before time $\tau_B$. Set 
\[
\partial B^\#= \bigcup_{B'\in \sB_L}\{x\in \partial B':d(x,B) \leq \alpha L\},
\]
which is the set of possible initial locations for the ignition particle of $B$ (the shape of the set $\partial B^\#$ resembles a hashtag $\#$).
We now give rules for when a block is a blue seed.

The ignition particle for $B$ is infected, starts at $x\in\partial B^\#$ and follows the path $x+ W_{B_x,\operatorname{ig}}(\beta(t-\tau_B))$, where $B_x$ is the block containing $x$. Define the event
\[
\cA^{(1)}_{B}=\bigcap_{B':d(B,B')\leq 2}\Big\{\sup_{0\leq s\leq \xi/ \log \log L} |W_{B',\operatorname{ig}}(\beta s)| \leq \frac12 \alpha L\Big\}.
\]
Since $d(B_x, B) \le 2$ for all $x \in \del B^\#$, this event ensures that the ignition particle for $B$ remains in $U_x$ until time $\tau_B + \xi/ \log \log L$, and hence cannot become the ignition particle of another block prior to this time.

For each lower scale block $B'\in \sB_{L/2}$ with $d(B, B') \le 1$ and $x \in \del B^\#$ define the event
\begin{align*}
\cA^{(2)}_{B,B',x}=\bigcup_{a,s,s'}\bigg\{ a(s) = x+ W_{B_x,\operatorname{ig}}(\beta s), a(s')\in B', \forall 0\leq s''<s', a(s'')\in U_x\bigg\},
\end{align*}
where the union is over $a \in H_B^- \cap H_B$ and $0\leq s \leq s' < \tfrac{\xi}{\log\log L}$.
This event asks for at least one principal particle in $B$ to have a rate 1 trajectory that intersects the ignition particle's trajectory at some time $s$, to stay within $U_x$ until some time $s'$, at which point it enters $B'$.  We will see that this guarantees that $B'$ is ignited before time $\tau_B + \tfrac{\xi}{\log\log L}$ if it has not been otherwise coloured.

\begin{lemma}
	\label{L:lower-scale-ignition}
	Suppose that a block $B \in \sB_L$ is ignited at time $\tau_B$ at location $x$ in a colouring process with arbitrary parameters $\chi, r, R$, and that $\cA^{(1)}_{B}\cap\cA^{(2)}_{B,B',x}$ holds. Then in any colouring process at scale $L/2$, the finer scale block $B'$ is coloured by time $\tau_B + \xi/(2\kappa_0)$.
\end{lemma}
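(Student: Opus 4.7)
The plan is to use the principal particle supplied by $\cA^{(2)}_{B,B',x}$ as a carrier that transports the infection from $B$ into $B'$. Let $a \in H_B^- \cap H_B$ and times $0 \le s \le s' < \xi/\log\log L$ be as in the defining union of the event. Since $a \in H_B^-$, taking $t = 0$ in \eqref{eq:principal} gives $a(0) \in B$, and since $a \in H_B$ the coupling of Section~\ref{S:SI-colouring} will add $a$ to $X_t^*$ at exactly $X_t^*$-time $\tau_B$. At that moment the ignition particle of $B$ starts from $x \in \del B^\#$ and evolves along $x + W_{B_x,\operatorname{ig}}(\beta(t - \tau_B))$. Before its infection, $a$'s position at $X_t^*$-time $\tau_B + t$ is exactly $a(t)$. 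At $t = s$ the two positions $a(s)$ and $x + W_{B_x,\operatorname{ig}}(\beta s)$ coincide by the defining condition of $\cA^{(2)}_{B,B',x}$, so either $a$ is already infected (by some earlier contact) or it is infected at that meeting by the ignition particle itself; in either case, writing $s_0 := \iota_a - \tau_B$, we have $s_0 \in [0, s]$.

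Next I would apply the post-infection formula \eqref{E:atti}: for $t > \iota_a$, $a$'s $X_t^*$-position is $a(s_0 + \beta(t - \tau_B - s_0))$. Solving $s_0 + \beta(t - \tau_B - s_0) = s'$ shows that $a$ visits $a(s') \in B'$ at the $X_t^*$-time
\[
t^* := \tau_B + s_0 + \frac{s' - s_0}{\beta}.
\]
The right-hand side is monotone in $s_0$, so on the range $s_0 \in [0, s]$ it is bounded by $\tau_B + s'/\min(1,\beta) \le \tau_B + \xi/(\min(1,\beta)\log\log L)$. Taking $k$ large enough that $\log\log L \ge 2\ka_0/\min(1,\beta)$ (an implicit condition of the block construction, since we are free to take $L = 2^k$ as large as needed) then yields $t^* \le \tau_B + \xi/(2\ka_0)$. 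At time $t^*$ an infected particle enters $B'$, so rule~(b) of Definition~\ref{D:SI-colouring} applied in any scale-$L/2$ colouring forces $B'$ to be coloured by time $t^*$ if it has not already been coloured through some earlier event.

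The main subtlety I anticipate is the bookkeeping between the pre-infection rate-$1$ parametrization and the post-infection rate-$\beta$ parametrization of $a$'s motion; the observation that makes this painless is that $a$ always traces the same spatial sample path $a(\cdot) \in \fW$, so infection only rescales the time at which $a$ arrives at the target point $a(s')$. The companion event $\cA^{(1)}_B$ does not enter the mechanics above but is carried along in the hypothesis because it controls the coupling and independence structure needed for applications of the lemma elsewhere.
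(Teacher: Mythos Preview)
Your overall strategy matches the paper's proof exactly: follow the carrier particle $a$, show it meets the ignition particle by time $\tau_B+s$, and then ride its post-infection reparametrized path to $a(s')\in B'$. The time computation $t^*=\tau_B+s_0+(s'-s_0)/\beta\le \tau_B+s'(1\vee\beta^{-1})$ is correct and is precisely what the paper writes.

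However, there is a genuine gap. You assert that the ignition particle ``evolves along $x+W_{B_x,\operatorname{ig}}(\beta(t-\tau_B))$'' and that $a$'s post-infection position is $a(s_0+\beta(t-\tau_B-s_0))$, and then you dismiss $\cA^{(1)}_B$ as not entering the mechanics. This is wrong. In the block construction of Section~\ref{S:SI-colouring}, any infected particle that enters an as-yet-uncoloured $\sB_L$ block has its future trajectory \emph{overwritten} by a fresh ignition walk. So the formulas you use for both the ignition particle and for $a$ are only valid provided neither one strays out of the already-coloured region $U_x$ during the relevant time window. The event $\cA^{(1)}_B$ is exactly what pins the ignition particle inside $U_x$ on $[\tau_B,\tau_B+\xi/\log\log L]$, so that its trajectory really is $x+W_{B_x,\operatorname{ig}}(\beta\cdot)$ at the meeting time $\tau_B+s$. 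Likewise, the clause ``$\forall\, 0\le s''<s',\ a(s'')\in U_x$'' in $\cA^{(2)}_{B,B',x}$, which you never invoke, is what guarantees $a$ stays in $U_x$ up to its arrival in $B'$ and therefore does not get hijacked as the ignition particle of some other $\sB_L$ block; this is exactly the sentence ``particle $a$ remains in $U_x$ and so does not become the ignition particle of another block in $\sB_L$'' in the paper's proof. Without these two containment facts the trajectory formulas you rely on need not hold, and the meeting at time $\tau_B+s$ and the arrival at $a(s')$ are not justified.
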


\begin{proof}
Select some $a\in H_B^- \cap H_B$ and  $s,s'$ that make the event $\cA^{(2)}_{B,B',x}$ hold. The particle $a$ must be infected by time $\iota_a \in [\tau_B, \tau_B+s]$. Indeed, if particle $a$ does not get infected before time $\tau_B + s$ then at this time it will meet the ignition particle and become infected. During the interval $[\tau_B,\tau_B + \iota_a + \beta^{-1}(s'-\iota_a))$, particle $a$ remains in $U_x$ and so does not become the ignition particle of another block in $\sB_L$.  Then at time $\tau_B + \iota_a + \beta^{-1}(s'-\iota_a)$, which satisfies the inequality
	\[
	\tau_B + \iota_a + \beta^{-1}(s'-\iota_a)) \leq \tau_B + s'(1\vee\beta^{-1}) < \tau_B + \frac{\xi}{2\kappa_0},
	\]
	particle $a$ is in the finer scale block $B'$.  If $B'$ has not already been coloured at this time in the finer scale process, then at this time it is ignited.
\end{proof}

We define
\begin{equation}
\label{E:cA123}
\cA_B=\cA_B^{(1)}\cap\bigcap_{x\in\partial B^\#} \bigcap_{B'\in \sB_{L/2}:d(B,B')\le 1}\cA^{(2)}_{B,B',x}
\end{equation}
We call the block $B$ a \textbf{blue seed} if $(\cA_B)^c$ holds.  When $\cA_B$ holds, if $B$ is ignited then all its neighbouring blocks in both $\sB_L$ and $\sB_{L/2}$ will be coloured before time $\tau_B + \xi/\kappa_0$.

\begin{lemma}\label{L:blue-seed-indep}
	For $d(0, B) \ge r \chi/\xi$, the event $\cA_B$ is measurable given the $\sig$-algebras $\cM_{B'}, B' \in \cB_L, d(B, B') \le 2$.
\end{lemma}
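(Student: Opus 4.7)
The plan is to decompose $\cA_B$ according to its definition \eqref{E:cA123} and verify local measurability of each piece separately. First I would observe that $\cA_B^{(1)}$ is trivially measurable with respect to the required $\sigma$-algebras, since it is defined in terms of the walks $W_{B', \operatorname{ig}}$ indexed over exactly the blocks $B'$ with $d(B,B') \leq 2$, and each such $W_{B', \operatorname{ig}}$ lives inside $\cM_{B'}$ by construction.

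Next I would turn to the events $\cA_{B,B',x}^{(2)}$. Here the subtlety is the union over particles $a \in H_B^- \cap H_B$: the set $H_B^-$ is defined purely by the growth condition \eqref{eq:principal} on raw trajectories in $\sP_B$, so it is measurable with respect to $\cM_B$ alone; but $H_B$ is the set of particles first coloured in $B$, which a priori depends on the entire colouring process and thus on every $\cM_{B''}$. This is the main potential obstruction.

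The key observation that removes the obstruction is Lemma \ref{L:principal-particles}, which applies precisely because the hypothesis $d(0, B) \geq r\chi/\xi$ is in force. It gives $H_B^- \subset H_B$, so $H_B^- \cap H_B = H_B^-$, and the union in the definition of $\cA_{B,B',x}^{(2)}$ can be taken over $a \in H_B^-$ alone, which is $\cM_B$-measurable. Once this is done, for each fixed $a$ the condition inside the union compares the trajectory $a$ (a function of $\cM_B$) to $x + W_{B_x,\operatorname{ig}}(\beta \cdot)$ (a function of $\cM_{B_x}$); since $x \in \partial B^\#$ forces $d(B_x, B) \leq 2$, these are all in the allowed family.

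Finally I would take the intersection over $x \in \partial B^\#$ and over lower-scale neighbours $B' \in \sB_{L/2}$ with $d(B,B') \leq 1$. Both ranges are finite and contribute only block indices within distance $2$ of $B$, so combining this with the bound on $\cA_B^{(1)}$ gives the required measurability. I expect the entire argument to be short once the appeal to Lemma \ref{L:principal-particles} is made; the only real content is identifying that lemma as the right tool for decoupling $H_B$ from the global colouring process.
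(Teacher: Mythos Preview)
Your proposal is correct and follows essentially the same approach as the paper: the paper's proof also identifies the set $H_B^- \cap H_B$ as the only potential source of non-local dependence and invokes Lemma~\ref{L:principal-particles} under the hypothesis $d(0,B) \ge r\chi/\xi$ to reduce it to the $\cM_B$-measurable set $H_B^-$. Your write-up is somewhat more explicit about why $\cA_B^{(1)}$ and the remaining ingredients of $\cA_{B,B',x}^{(2)}$ are local, but the content is the same.
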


\begin{proof}
	From the definitions, the only potential dependence on information not contained in $\cM_{B'}, B' \in \sB_L, d(B, B') \le 2$ is in the set $H_B^- \cap H_B$ used in the definition of $\cA^{(2)}_{B,B',x}$. Since $d(0, B) \ge r \chi/\xi$, Lemma \ref{L:principal-particles} implies that $H_B^- \cap H_B = H_B^-$, which is $\cM_B$-measurable.
\end{proof}

\begin{proposition}\label{p:blueSeed}
	There exists $\al > 0$ such that for any $\epsilon>0$ there exists $L_\epsilon$ such that the following holds. For any colouring process defined at a scale $L \ge L_\epsilon$ and any valid parameter choices $\chi, r, R$, the probability that a block $B$ is a blue seed when $d(0, B) \ge r \chi/\xi$ is bounded above by $\ep$:
	\[
	\P[(\cA_B)^c] \leq \epsilon.
	\]
\end{proposition}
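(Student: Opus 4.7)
The plan is to write $(\cA_B)^c$ as a union of the failure of $\cA_B^{(1)}$ together with the failures of the individual events $\cA^{(2)}_{B,B',x}$, then bound each contribution and take a union bound. Since $|\del B^\#|=O(L)$ and the number of admissible $B'\in\sB_{L/2}$ with $d(B,B')\le 1$ is $O(1)$, it will suffice to show that $\P[(\cA_B^{(1)})^c]=o(1)$ and that $\P[(\cA^{(2)}_{B,B',x})^c]=o(1/L)$ uniformly in $x$ and $B'$.

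The first estimate is straightforward: $\cA_B^{(1)}$ is an intersection over a bounded-in-$L$ number of blocks $B'$ of events asking that a rate-$\beta$ walk $W_{B',\operatorname{ig}}$ deviate by at most $\tfrac12\al L$ within time $\xi/\log\log L=\al L^2/\log\log L$. The typical maximum displacement is of order $L\sqrt{\beta\al/\log\log L}=o(L)$, so Doob's maximal inequality together with a union bound gives $\P[(\cA_B^{(1)})^c]\to 0$ uniformly in the colouring parameters $\chi,r,R$.

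For the main step I would condition on the $\sigma$-algebra $\cM_{B_x}$ (fixing the trajectory $s\mapsto x+W_{B_x,\operatorname{ig}}(\beta s)$) and on the event $\cA_B^{(1)}$, under which this path stays inside $U_x$ throughout the window. Since $d(0,B)\ge r\chi/\xi$, Lemma~\ref{L:principal-particles} gives $H_B^-\cap H_B=H_B^-$, so the set of principal particles is $\cM_B$-measurable and conditionally independent of the fixed ignition path. A standard backward random-walk tail estimate shows that a walk sampled from $\sW_u$ with $u\in B$ satisfies the cone condition \eqref{eq:principal} with probability bounded below by a universal $p^*>0$, so $|H_B^-|$ stochastically dominates a Poisson variable of mean $p^*\mu L^2$ with starting positions approximately uniform in $B$.

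Each principal particle starting at $u$ contributes a success probability $q(u)$ for the event in $\cA^{(2)}_{B,B',x}$, which I would decompose as (i) the walk meets the fixed ignition path inside $U_x$ within time $\xi/(2\log\log L)$, and (ii) it then reaches $B'$ while remaining in $U_x$ within the remaining time. In dimension two, two rate-$O(1)$ walks at distance $d\lesssim\sqrt{T/\log T}$ meet by time $T$ with probability $\Omega(1/\log T)$, and from any chosen point in $U_x$ a rate-$1$ walk hits $B'$ in time $\Omega(L^2/\log\log L)$ while staying in the $O(L)$-diameter neighbourhood $U_x$ with probability bounded below by a power of $1/\log L$, by a local CLT and a tube estimate. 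Thus the $\gtrsim\mu L^2/\log\log L$ principal particles starting within distance $o(L)$ of $x$ each contribute $q(u)\ge c(\log L)^{-C}$ for some $C>0$, and Poisson concentration yields $\P[(\cA^{(2)}_{B,B',x})^c]\le\exp\lf(-c'\mu L^2(\log L)^{-C-1}\rg)=o(1/L)$ once $\al$ is taken sufficiently large. The main obstacle is the tightness of the window $\xi/\log\log L$ against the diffusive scale $\sqrt{\xi}=L\sqrt{\al}$: both the meeting with the ignition trajectory and the subsequent hitting of $B'$ are only barely feasible in this window, so the argument must carefully balance the $(\log L)^{-C}$ per-particle success rate against the $O(L)$ union bound over $\del B^\#$.
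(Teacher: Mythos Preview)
Your overall plan matches the paper's: bound $\P[(\cA_B^{(1)})^c]$ by a maximal inequality, use Lemma~\ref{L:principal-particles} to reduce to $H_B^-$, then show each $\cA^{(2)}_{B,B',x}$ fails with probability $\exp(-cL^2/\operatorname{polylog} L)$, which absorbs the $O(L)$ union bound over $x\in\del B^\#$. However, there are two concrete gaps in the execution.

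First, the direction on $\alpha$ is backwards. You write ``once $\al$ is taken sufficiently large,'' but the principal-particle density bound \eqref{eq:principalLB} requires $\alpha$ \emph{small}: one needs $\P[\sup_{0\le t\le \alpha L^2}|W_t|>L/4]$ bounded away from $1$, which fails if $\alpha$ is large. The paper fixes a small $\alpha$ once and for all and then sends $L\to\infty$; the polylog losses in the window are absorbed by the $L^2$ particle count, not by enlarging $\alpha$.

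Second, your meeting estimate has a hypothesis mismatch. You invoke ``two walks at distance $d\lesssim\sqrt{T/\log T}$ meet with probability $\Omega(1/\log T)$,'' but here $T=\xi/(2\log\log L)\sim L^2/\log\log L$ while the principal particles sit throughout $B$ at typical distance $\Theta(L)$ from $x\in\del B^\#$, so $d\sim\sqrt{T\log\log L}\gg\sqrt{T/\log T}$. Restricting to the $o(L)$-neighbourhood of $x$ as you suggest does not give $\gtrsim\mu L^2/\log\log L$ particles; it gives at most $L^2/(\log L\cdot\log\log L)$ of them (and $x$ lies on a boundary, so even the intersection with $B$ must be checked). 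The paper sidesteps this by not estimating a per-particle meeting probability at all: it bounds the \emph{expected aggregate intersection time} $\E[R_{B,x}\mid H^*,W_{B,\operatorname{ig}},\cA_B^{(1)}]$ via the pointwise local estimate $\P[a(s)=v,\,S(a,U_x)>s]\ge L^{-2}(\log L)^{-\phi_\alpha}$ for $s$ in the window, and then applies Lemma~\ref{l:hittingNumberSimple} to convert this into a lower bound on the \emph{number} $N_{B,x}$ of particles that hit the ignition trajectory. This automatically handles the slightly sub-diffusive time scale as a polylog correction rather than requiring a separate case analysis on the starting distance. Only after obtaining $N_{B,x}\ge cL^2/(\log L)^{\phi_\alpha+2}$ does the paper feed in the second step (each such particle reaches $B'$ within $U_x$ with probability $\ge(\log L)^{-\theta_\alpha}$) via a binomial bound. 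Your direct per-particle route can be made to work, but you would need the correct polylog meeting bound for the full distance $\Theta(L)$, not the $\sqrt{T/\log T}$ version.
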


Proposition \ref{p:blueSeed} will be proven in Sections \ref{S:tools} and \ref{S:prop35}. An immediate consequence of the two results above is the following.

\begin{corollary}
	\label{C:stoch-dom}
	There exists $\al > 0$ such that for any $\epsilon>0$ there exists $L_\epsilon$ such that the following holds. For all $L > L_\ep$ and any valid parameter choices $\chi, r, R$, letting $\fB_* \sset \sB_L$ be the set of blue seeds associated to these parameters, the set
	$$
	\fB_\clubsuit := \fB_* \cap \{B\in \fB_L : d(0, B) \ge r \chi/\xi\}
	$$
	is stochastically dominated by an i.i.d. Bernoulli process of intensity $\ep$.
\end{corollary}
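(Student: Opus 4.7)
The plan is to combine Lemma \ref{L:blue-seed-indep} (finite-range dependence) with Proposition \ref{p:blueSeed} (small marginals) and then invoke a standard domination theorem for finite-range dependent Bernoulli fields, e.g.\ the Liggett--Schonmann--Stacey theorem.

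First I would record the dependence structure. Identify $\sB_L$ with $\Z^2$ via the obvious map $B \mapsto z$ from \eqref{E:BL-construct}, and consider the $\{0,1\}$-valued field
\[
Y_B := \mathbf{1}\bigl[B \in \fB_\clubsuit\bigr], \qquad B \in \sB_L.
\]
For $B$ with $d(0,B) < r\chi/\xi$ we have $Y_B = 0$ deterministically, so no domination is needed there. For $B$ with $d(0,B) \ge r\chi/\xi$, Lemma \ref{L:blue-seed-indep} says $Y_B$ is measurable with respect to the sigma-algebras $\cM_{B'}$ for $B'$ at block-distance at most $2$ from $B$. Since the family $\{\cM_{B'} : B' \in \sB_L\}$ is i.i.d., it follows that for any two subsets $S_1, S_2 \sset \sB_L$ with block-distance greater than $4$, the restrictions $\{Y_B\}_{B \in S_1}$ and $\{Y_B\}_{B \in S_2}$ are independent. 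That is, $\{Y_B\}$ is a $4$-dependent Bernoulli field.

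Next I would apply the Liggett--Schonmann--Stacey domination theorem: there is a function $g=g_4:[0,1]\to[0,1]$ with $g(q) \to 0$ as $q\to 0$ such that any $4$-dependent Bernoulli field on $\Z^2$ with marginals at most $q$ is stochastically dominated by an i.i.d.\ Bernoulli field of intensity $g(q)$. Given the target intensity $\ep$ in the corollary, choose $q = q(\ep) > 0$ small enough that $g(q) \le \ep$. Applying Proposition \ref{p:blueSeed} with this $q$ in place of $\ep$ gives a scale $L_\ep := L_{q(\ep)}$ and a universal $\al > 0$ (the $\al$ in Proposition \ref{p:blueSeed} does not depend on $\ep$, so the same $\al$ works here) such that $\P[Y_B = 1] \le q$ for every $B$ with $d(0,B) \ge r\chi/\xi$ whenever $L \ge L_\ep$.

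Combining the three ingredients, for $L \ge L_\ep$ the field $\{Y_B\}_{B \in \sB_L}$ is $4$-dependent with marginals at most $q(\ep)$, and hence is stochastically dominated by an i.i.d.\ Bernoulli($\ep$) field, as desired. The only substantive step is the domination theorem itself, which is standard; the rest is bookkeeping to align the $\ep$'s. One mild subtlety is that the bound on marginals in Proposition \ref{p:blueSeed} only applies outside the ball of radius $r\chi/\xi$, but this is harmless since we have defined $\fB_\clubsuit$ precisely to exclude that ball.
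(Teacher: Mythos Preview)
Your proposal is correct and follows essentially the same approach as the paper: both combine Lemma \ref{L:blue-seed-indep} for finite-range dependence with Proposition \ref{p:blueSeed} for the marginal bound, and then invoke the Liggett--Schonmann--Stacey domination theorem. Your explicit adjustment $L_\ep := L_{q(\ep)}$ is exactly what the paper alludes to when it remarks that the $L_\ep$ here may be larger than the one in Proposition \ref{p:blueSeed}.
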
 

\begin{proof}
We appeal to Theorem 0.0(i)\footnote{It really is Theorem 0.0, this is not a typo} in \cite{liggett1997domination}, which states the following. Let $d \ge 1$, and suppose that $X:\Z^d \to \{0, 1\}$ is a random process such that for any vertex $v \in \Z^d$, the conditional probability that $X(v) = 1$ given all the values of $X$ on vertices at $\ell^\infty$-distance at least $k$ away from $v$ is at most $\ep$. Then $X$ is stochastically by an i.i.d.\ Bernoulli process $Y$ on $\Z^d$ such that $\E Y(0) = f(\ep)$, where $f(\ep) \to 0$ with $\ep$. Here the function $f$ depends on $k$ and $d$. 
In our setting, we use that $\sB_L$ has a graph structure which is (isomorphic to) $\Z^2$ and let $X = \fB_\clubsuit$. Lemma \ref{L:blue-seed-indep} and the independence of the different $\sig$-algebras $\cM_B$ gives the estimate for finite-range dependence, and Proposition \ref{p:blueSeed} gives the estimate on the individual seed probabilities. Note that the $L_\ep$ we get in Corollary \ref{C:stoch-dom} may be larger than the $L_\ep$ in Proposition \ref{p:blueSeed}.
\end{proof}

\subsection{Preliminary lemmas}
\label{S:tools}
In this short section we prove a few simple estimates about Poisson processes and random walks that we will make use of.
With $W_t$ a continuous time random walk, for a vertex $x$ in a block $B \in \sB_L$ define
\[
p_x = \P\Big[\forall t\geq 0: d(x+W_t,B) \leq \frac{L}{20} \lfloor t\xi^{-1} \rfloor \Big],
\]
where $\xi = \al L^2$.
At time $\tau_B$ the principal particles $H_B^-$ are Poisson distributed with intensity $p_x \mu$.
For any $x \in B$ such that $d(x,B^c) \geq \frac{L}{4}$ we have that
\begin{align}\label{eq:principalLB}
p_x &\geq 1 - \P\Big[\sup_{0\leq t \leq \alpha L^2}  |W_t| > \frac{L}{4} \Big] - \sum_{j\geq 1} \P\Big[\sup_{0\leq t \leq (j+1)\alpha L^2}  |W_t| > \frac{Lj}{20} \Big]   \geq \frac34,
\end{align}
for all large enough $L$ and $\alpha$ sufficiently small.

\begin{lemma}\label{l:HBsize}
For $\al$ sufficiently small, for large enough $L$ the number of particles in $H_B$ is stochastically dominated by a Poisson random variable with mean $\frac32 \mu L^2$.
\end{lemma}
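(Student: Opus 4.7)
The plan is to upper-bound $|H_B|$ by the count of all $\sP_B$-particles whose trajectory visits $B$ at some time in $[0,\chi]$, and then to compute the resulting Poisson intensity via standard random-walk hitting-time estimates. By the block construction in Section~\ref{S:SI-colouring}, every particle placed into $H_B$, whether through Case~1 (present in $B$ at $\sP_B$-time $0$) or through Case~2 (entering $B$ at some $\sP_B$-time in $(0,\chi]$), has trajectory $a(\cdot)$ satisfying $a(t)\in B$ for some $t\in[0,\chi]$. The ``first coloured in $B$'' filter can only further restrict this set, so dropping it preserves a stochastic upper bound:
\[
H_B \subset \{a\in\sP_B : a(t)\in B \text{ for some } t\in [0,\chi]\}.
\]
Since $\sP_B$ has intensity measure $\mu\sW$ on $\fW$, the right-hand side has cardinality distributed as $\mathrm{Poisson}(\mu\,m)$, where $m := \sum_{u\in\Z^2}\P_u[T\leq\chi]$ and $T$ denotes the first hitting time of $B$ by a rate-one simple random walk $W$ started at $u$.

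I would then compute $m$ by partitioning $\Z^2$ according to $L^1$-distance to $B$. The contribution from $u\in B$ is exactly $|B|=L^2$. For $u\notin B$, a standard Gaussian maximal inequality for continuous-time SRW on $\Z^2$ gives
\[
\P_u[T\leq\chi] \leq C\exp\bigl(-c\,d(u,B)^2/\chi\bigr)
\]
whenever $d(u,B)\geq\sqrt{\chi}$, and the probability is trivially bounded by $1$ otherwise. Since the $L^1$-distance-$r$ shell around the $L\times L$ square $B$ contains $O(L+r)$ vertices, summing separately over $r\leq\sqrt{\chi}$ (using the trivial bound) and $r>\sqrt{\chi}$ (using the Gaussian tail) yields
\[
m \leq L^2 + C\bigl(L\sqrt{\chi}+\chi\bigr).
\]

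In the regime where $\chi$ is of order $\al L^2$, which is the relevant range since $\chi\geq 8\xi=8\al L^2$ and the lemma is applied with $\chi$ so chosen, the correction term is of order $L^2(\sqrt{\al}+\al)$. For $\al$ sufficiently small and $L$ sufficiently large, this is at most $L^2/2$, giving $m\leq\tfrac{3}{2}L^2$ and hence stochastic domination of $|H_B|$ by $\mathrm{Poisson}(\tfrac{3}{2}\mu L^2)$. There is no serious obstacle to overcome; the most calculation-heavy step is the Gaussian maximal estimate, which follows routinely from applying Doob's inequality to the coordinate sub-martingales together with Hoeffding-type bounds. The main conceptual observation is that the delicate ``first coloured in $B$'' filter defining $H_B$ can simply be discarded for the purposes of a Poisson upper bound, at which point everything reduces to counting SRW trajectories that intersect $B$ in a relatively short time window.
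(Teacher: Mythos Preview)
Your overall strategy coincides with the paper's: bound $|H_B|$ by the number of $\sP_B$-particles visiting $B$ in a short time window, then compute the Poisson mean. The paper takes the window $[0,\xi]=[0,\alpha L^2]$ and simply asserts that the resulting mean is at most $\tfrac32\mu L^2$ for large $L$; your shell decomposition is a more explicit version of the same estimate.

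The gap is your claim that ``$\chi$ is of order $\alpha L^2$''. The inequality $\chi\ge 8\xi$ is a \emph{lower} bound only. In Theorem~\ref{T:linear-speed-RB-process} the parameters satisfy $\chi_i=8\chi_{i+1}$ and thus grow geometrically as one descends in scale, and in Corollary~\ref{C:for-next-sect} (which feeds into the applications of this lemma in Proposition~\ref{p:particleInfectTime}) the fixed $\chi$ may be arbitrarily large relative to $\xi_{L_p}$. With $\chi\gg L^2$ your bound $m\le L^2+C(L\sqrt{\chi}+\chi)$ is vacuous.

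The repair is that the effective window for Case~2 is not $(0,\chi]$ but $(0,\min_{B'\sim B}(\tau_{B'}-\tau_B)]$: once a neighbour $B'$ is coloured, any particle crossing from $B'$ into $B$ has already been coloured in $B'$ and hence does not belong to $H_B$. Rule~(a) of Definition~\ref{D:SI-colouring} bounds this gap by $\xi$ whenever all neighbours of $B$ lie at distance greater than $R$ from the origin, which is the window the paper uses and with which your shell computation goes through verbatim.
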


\begin{proof}
Let $H_B^+$ be all particles in $\sP_B$ that are in $B$ at some point during the time time interval $[0,\al L^2]$ so $H_B\subset H_B^+$.  The size of $|H_B^+|$ is Poisson distributed with mean,
\[
\E |H_B^+| = \sum_{x\in \Z^2} \mu  \P[\exists t\in[0,\al L^2]: W_t+x \in B ] \leq  \frac32 \mu L^2,
\]
where the inequality uses that $L$ is sufficiently large.
\end{proof}



Next, let $H$ be a set of particles performing independent rate-$1$ random walks. 
The following lemma will be useful for counting how many particles in $H$ hit a deterministic trajectory $x:[0,T]\to\Z^2$. We will ask that all $H$-particles remain in a set $U\subset \Z^2$ until they hit $x(t)$. With this in mind, we write 
$$
S(a,U)=\inf\{s\geq 0: a(s)\not\in U\},
$$
and define
\[
N(H,T,U,x):= \sum_{a\in H} I(\{t\in [0,T]: a(t) = x(t), S(a,U) > t\}\neq \emptyset).
\]
This counts the number of particles in $H$ that intersect the path $x$ in the interval $[0, T]$ prior to leaving the set $U$. We also let
\[
R(H,T,U,x):= \sum_{a\in H} \int_0^T I(a(t) = x(t), S(a,U) > t)dt
\]
be the aggregate intersection time of the particles with $x(t)$ prior to leaving $U$.
\begin{lemma}
\label{l:hittingNumberSimple}
There exist constants $C_1,C_2>0$ such that for $T\geq 2$, any set of particles $H$ and any path $x:[0, T] \to \Z^2$ and any set $U$ we have
\[
\P\bigg[N \leq \frac{C_1}{\log T} \E[R ]\bigg] \leq 2\exp\bigg(-\frac{C_2}{\log T} \E[R ]\bigg)
\]
where $N=N(H,T,U,x)$ and $R=R(H,T,U,x)$.
\end{lemma}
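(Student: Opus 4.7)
The plan is to reduce the bound to a standard Chernoff concentration inequality, using a heat-kernel estimate to relate $\E[R]$ and $\E[N]$. Decompose
\[
N = \sum_{a\in H} N_a, \qquad R = \sum_{a\in H} R_a,
\]
where $N_a, R_a$ are the per-particle indicator and aggregate intersection time. Since the particles in $H$ perform independent random walks, the pairs $(N_a, R_a)$ are independent across $a\in H$, and each $N_a \in \{0,1\}$.

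The heart of the proof is showing that
\[
\E[R_a \mid N_a = 1] \le C\log T
\]
for some absolute constant $C$. To see this, condition on the first hitting time $\sigma = \inf\{t \ge 0 : a(t) = x(t), S(a,U) > t\}$ (finite on $\{N_a = 1\}$). By the strong Markov property at $\sigma$, the process $\tilde a(s) := a(\sigma + s) - a(\sigma)$ is a rate-$1$ simple random walk on $\Z^2$ starting at $0$, independent of $\cF_\sigma$, and $a(\sigma) = x(\sigma)$. Dropping the $S(a,U) > t$ constraint and using the substitution $t = \sigma + s$,
\[
\E[R_a \mid \cF_\sigma, N_a = 1] \le \int_0^{T-\sigma} \P\bigl[\tilde a(s) = x(\sigma + s) - x(\sigma)\bigr]\, ds.
\]
In dimension $d = 2$, the local CLT gives the uniform heat-kernel bound $\P[\tilde a(s) = y] \le C/(1+s)$ for all $y \in \Z^2$, so the integral is at most $C\int_0^T \tfrac{1}{1+s}\,ds \le C'\log T$ for $T \ge 2$.

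From this estimate,
\[
\E[R] = \sum_{a\in H} \P[N_a = 1]\, \E[R_a \mid N_a = 1] \le C'\log T \cdot \E[N],
\]
so $\E[N] \ge \E[R] / (C'\log T)$. Finally, because $N$ is a sum of independent $\{0,1\}$-valued random variables, a standard Chernoff bound gives $\P[N \le \E[N]/2] \le \exp(-\E[N]/8)$; combined with the preceding display this yields the lemma with $C_1 = 1/(2C')$ and $C_2 = 1/(8C')$ (the factor $2$ in front of $\exp$ in the statement absorbs any mild slack in the Chernoff bound used).

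There is no real obstacle; the only step that requires care is the conditional expectation bound $\E[R_a \mid N_a = 1] \le C \log T$, where one must remember to apply the strong Markov property at $\sigma$ so that the bound on the heat kernel of a centered random walk applies to the shifted path $x(\sigma + s) - x(\sigma)$ rather than to $x(\cdot)$ directly. The logarithmic factor is exactly what the two-dimensional heat kernel produces.
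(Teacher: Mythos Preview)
Your proof is correct and follows essentially the same approach as the paper: both apply the strong Markov property at the first hitting time $\sigma$ (the paper's $\varsigma_a$), use the two-dimensional heat-kernel bound $P^s_{y,y'}\le C/(1+s)$ to obtain $\E[R_a]\le C(\log T)\,\P[N_a=1]$, sum to get $\E[N]\ge \E[R]/(C\log T)$, and finish with a concentration bound for a sum of independent indicators. Your write-up is slightly more explicit about the use of independence in the Chernoff step, but otherwise the arguments are the same.
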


\begin{proof}
For a particle $a \in H$, define
\begin{align*}
    n_a &= \P\bigg[\int_0^T I(a(t) = x(t), S(a,U) > t)dt>0\bigg],\\
    r_a &= \E\bigg[\int_0^T I(a(t) = x(t), S(a,U) > t)dt\bigg]
\end{align*}  
Setting $\varsigma_a = \inf\{t\geq 0: a(t) = x(t), S(a, U) > t\}$ and letting $P^t_{y, y'} = \P(a(t) = y' \;|\; a(0) = y)$ be the transition probability for a rate $1$ random walk we have that
\begin{align*}
r_a &\leq \E[I(\varsigma_a \leq T) \int_{\varsigma_a}^T P^{t-\varsigma_a}_{x(\varsigma_a),x(t)} dt ] \\
&\leq \E[I(\varsigma_a \leq T) \int_{\varsigma_a}^T \frac{C}{1+t-\varsigma_a} dt ]\\
&\leq C\log (T) \E[I(\varsigma_a \leq T) ] =  C\log (T)n_a.
\end{align*}
Hence
\[
\E[N] \geq \frac{1}{C\log T} \E[R] 
\]
and the result follows by standard concentration bounds since $N$ is a sum of indicators.
\end{proof}

\subsection{Proof of Proposition~\ref{p:blueSeed}}
\label{S:prop35}

Fix a small value of $\al > 0$, let $\ep > 0$, and let $B$ be such that $d(0, B) \ge r \chi/\xi$.
First, recalling that $\xi = \al L^2$, by standard random walk estimates, we have that
\[
\P[\sup_{0\leq s\leq \xi/\log \log L} |W_{B,\operatorname{ig}}(\beta s)| \le \frac{1}{2} \al L ] \to 1
\]
as $L \to \infty$.
Therefore by a union bound over $B$ and its 8 neighbouring blocks we have that for large enough $L$,
\begin{equation}
\label{E:pA1B}
\P[\cA^{(1)}_{B}]\geq 1 - \epsilon/2.  
\end{equation}
On this event the ignition particle stays inside $U_x$ and is at least distance $\frac12 \alpha L$ away from the boundary of $U_x$ up to time $\xi$.

Now, since $d(0, B) \ge r \chi/\xi$, Lemma \ref{L:principal-particles} implies $H_B^- = H_B^- \cap H_B$.
Let $H^* \sset H_B^-$ be the set of principle particles in $B$ such that $d(a(0), B^c) \ge \frac14 L$. By~equation~\eqref{eq:principalLB} these have density at least $\frac34\mu$ and so $|H^*|$ stochastically dominates a Poisson random variable with mean $\frac16 \mu L^2$.  Defining the event $\cI_B^{(1)}=\{|H^*|\geq \frac1{10}\mu L^2\}$ we have that
\begin{equation}
\label{E:pIB}
\P[\cI_B^{(1)}] \geq 1-\exp(-cL^2)
\end{equation}
for some $c > 0$ depending only on $\mu$. This follows from a standard estimate on the concentration of a Poisson random variable. Next, let 
\begin{align*}
N_{B,x}&=N(H^*,\tfrac{\xi}{2\log \log L},U_x,x+W_{B,\operatorname{ig}}(\beta s)),\\
\qquad R_{B,x}&=R(H^*,\tfrac{\xi}{2\log \log L},U_x,x+W_{B,\operatorname{ig}}(\beta s)).
\end{align*}
By standard random walk estimates there exists $\phi_\alpha>0$ independent of $L$ such that
\[
\inf_{s\in[\tfrac{\xi}{3\log \log L}, \tfrac{\xi}{2\log \log L}]}\inf_{a\in H^*}\inf_{\substack{v\in U_x\\d(v,U_x^c)\geq \frac12 \alpha L}}\P[a(s)=v,S(a,U_x)>s] \geq \frac{1}{L^2 (\log L)^{\phi_\al}}.
\]
Since $d(x+W_{B,\operatorname{ig}}(\be s), U_x^c) \ge \frac12 \alpha L$ for all $s \le \xi/\log \log L$, this implies that 
\[
\E[R_{B,x}\mid H^*,W_{B,\operatorname{ig}},\cA^{(1)}_{B}] \geq (\log L)^{-\phi_\al} |H^*| \frac{\xi}{6\log \log L} \geq  \frac{cL^2}{(\log L)^{\phi_\al + 1}}.
\]
for some $c > 0$.
Therefore by Lemma~\ref{l:hittingNumberSimple}, for some new $c>0$ we have
\begin{equation}
\label{E:Nbx-bd}
\P[N_{B,x} \geq cL^2/(\log L)^{\phi_\al + 2} \mid \cA^{(1)}_{B}]  \geq 1-\exp(-c L^2/(\log L)^{\phi_\al + 2}).
\end{equation}
Let $H'\subset H^*$ be the particles counted by $N_{B,x}$. Now, given that $N_{B, x}$ is large, we want to find the probability of the event $\cA^{(2)}_{B,B',x}$.
Again using standard random walk estimates we can find a constant $\theta_\alpha>0$ independent of $L, B,B'$ and $x$ such that for any starting point $u\in U_x$ such that $d(u,\partial U_x)\geq \frac12 \alpha L$ the probability that a random walk started at $u$ enters $B'$ at a time before $\tfrac{\xi}{\log \log L}$ without exiting $U_x$ is at least $(\log L)^{-\theta_\alpha}$.  Therefore using \eqref{E:Nbx-bd} we have
\begin{equation}
\label{E:cA-estimate}
\begin{split}
&\P[\cA^{(2)}_{B,B',x}\mid \cA^{(1)}_{B}] \\
&\geq \P[\hbox{Bin}( cL^2/(\log L)^{\phi_\al + 2},(\log L)^{-\theta_\alpha})>1]-\exp(-c L^2/(\log L)^{\phi_\al + 2})\\
&\geq 1  -\exp(-c' L^2/(\log L)^{\phi_\al + \theta_\alpha}) -\exp(-c L^2/(\log L)^{\phi_\al + 2}).
\end{split}
\end{equation}
Taking a union bound over $B'$ and $x$ and gathering the estimates \eqref{E:pA1B}, \eqref{E:pIB}, and \eqref{E:cA-estimate} we have that
\[
\P[\cA_{B}]\geq 1-\epsilon,
\]
for large enough $L$, establishing Proposition~\ref{p:blueSeed}.

\subsection{Linear growth}
\label{SS:linear-growth}
Our next aim is to apply Corollary \ref{C:stoch-dom} in conjunction with Theorems \ref{T:random-red-blue} and~\ref{T:random-red-blue-var-ka} to ensure linear growth of the SI process.

The first step is to check that the construction above fits into the framework of Section \ref{S:SSP}. To put processes in the framework of that section, it will be more natural to use the time scaling of Section \ref{S:SI-colouring}, rather than the scaling from Section \ref{S:SSP}, which forces red clocks to be defined on the interval $[0, 1]$ rather than an arbitrary interval $[0, s]$. This clearly does not affect any theorem statements from Section \ref{S:SSP}. The parameter $\ka$ will still play the same role, so that blue clocks will take values in $[0, \ka(u, v) s]$.

Let $X_t$ be an SI process, and consider parameters $L, \chi, r, R$ and a colouring process for these parameters coupled to $X_t$ as in Section \ref{S:SI-colouring}. 
Let $f =f_L:\Z^2 \to \sB_L$ denote the natural correspondence \eqref{E:BL-construct}. We will define an SSP on $\Z^2$ based on Definition \ref{D:SI-colouring}, which gives a colouring process on $\sB_L$. Rules (b) and (c), which ignite blocks with infected particles, will cause the red process to spread. Rule (a) will cause the blue process to spread. The blue seed set $\fB_* \sset \sB_L$ defined via \eqref{E:cA123} corresponds to a blue seed set $\fB_* \sset \Z^2$ by the correspondence $f$. Here we abuse notation slightly by using $\fB_*$ for both sets.

The speed of spread from rule (a) is different depending on the location of the blocks. This results in a variable parameter $\ka:E \to [0, \infty)$. More precisely, with $\ka_0 > 4000$ as in Section \ref{S:blue-seeds}, define the process parameter $\ka:[0, \infty) \to E$ by
\begin{equation}
\ka(u,v) = \begin{cases}
\ka_0, \qquad &d(f(v), 0) > R,\\
8 \ka_0, \qquad &d(f(v), 0) \in (r, R],\\
\chi \ka_0/\xi, \qquad &d(f(v), 0) \le r.
\end{cases}
\end{equation}
Next, before defining the red and blue clocks, we need a systematic way of dealing with the fact that certain blocks may be ignited simultaneously because of rule (c). We work with this by setting certain edge clocks equal to $0$.
First, we let $u<_{\operatorname{SI}} v$  for two vertices $u, v \in \Z^2$ if both $f(u)$ and $f(v)$ are ignited by the same ignition particle at location $x$, and $d(x, f(u)) < d(x, f(v))$. Our ignition rules guarantee that either $d(x, f(u)) < d(x, f(v))$ or $d(x, f(u)) > d(x, f(v))$.
The random directed graph on $\Z^2$ with edges given by pairs $(u, v) \in \Z^2 \times \Z^2$ with $u <_{\operatorname{SI}} v$ is acyclic. 

For each directed edge $(u, v)$ between adjacent vertices in $\Z^2$, define red and blue clocks $X_\fR:E \to [0, \xi/\ka_0]$ and $X_\fB:E \to [0, \infty)$ as follows:
\begin{align*}
X_\fR(u, v) &= \begin{cases}
\lf(\tau_{f(v)} - \tau_{f(u)} \rg) \wedge \frac{\xi}{\ka_0}, \qquad &\tau_{f(v)} - \tau_{f(u)} > 0,
\\
0, \qquad &\tau_{f(v)} = \tau_{f(u)} \text{ and } u <_{\operatorname{SI}} v, \\
\frac{\xi}{\ka_0} \qquad &\text{ else.}
\end{cases} \\
X_\fB(u, v) &= \begin{cases}
\tau_{f(v)} - \tau_{f(u)}, \qquad \qquad \;\;\; \; &\tau_{f(v)} - \tau_{f(u)} > 0,
\\
0, \qquad &\tau_{f(v)} = \tau_{f(u)} \text{ and } u <_{\operatorname{SI}} v, \\
\frac{\xi \ka(u, v)}{\ka_0} \qquad &\text{ else.}
\end{cases}
\end{align*}
The data above defines an SSP, where all red sites correspond to ignited boxes in $\sB_L$.
\begin{prop}
	\label{P:SI-to-BR}
The clocks above along with the set of blue seeds $\fB_*$ a.s.\ define a finite speed SSP on $\Z^2$ with time changed by a factor of $\xi/\ka_0$. Moreover, a.s.\ for every $u \in \Z^2$, the colouring time $T(u)$ in this SSP equals $\tau_{f(u)}$. That is,
\begin{equation}
\label{E:BR-tau-eqn}
\mathfrak{B}(t) \cup \mathfrak{R} (t) = \{u \in \Z^2 : \tau_{f(u)} \le t \}.
\end{equation}
Finally, let $\operatorname{IGN} : =\{v \in \Z^2: f(v) \text{ is ignited or } v = 0\}$. Then a.s.\ 
\begin{equation}
\label{E:fRinfty}
\operatorname{IGN}^c \sset \{v \in \Z^2 : v \in \fB(\infty) \text{ and } X_\fB(u, v) \text{ fired to colour } v \text{ for some } u \in \Z^2\}.
\end{equation}
In particular, $
\fR(\infty) \sset \operatorname{IGN} \smin \fB_*.$
\end{prop}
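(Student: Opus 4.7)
The plan is to prove all four assertions by induction on the SI colouring times $\tau_B$, ordering blocks so that $\tau_{B_1} \le \tau_{B_2} \le \dots$, with ties broken by $<_{\operatorname{SI}}$. The structural observation that makes the proof work is that the clocks are defined precisely so that, whenever $\tau_{f(v)} > \tau_{f(u)}$, the clock $X_{C(u)}(u, v)$ equals exactly the time gap $\tau_{f(v)} - \tau_{f(u)}$ for the colour $C(u)$ that $u$ will actually have, and whenever $\tau_{f(v)} = \tau_{f(u)}$ with $u <_{\operatorname{SI}} v$, the clock vanishes. In every remaining case the clock is set to its maximum allowed value, which for the blue clock is exactly $\ka(u,v)\xi/\ka_0$ --- precisely the threshold at which SSP rule 4 triggers blue propagation.

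First I would verify well-definedness. Acyclicity of the zero-weight subgraph follows because $<_{\operatorname{SI}}$ strictly increases distance from a common ignition point, and the first branch of each clock definition requires $\tau_{f(v)} > \tau_{f(u)}$. Finite speed follows since SI-rule (a) forces $\tau_B$ to grow at least linearly in $d(0, B)$. The inductive step is then a case analysis: if $B_i$ is ignited (SI-rule (b) or (c)), the red block $B' = f(u)$ containing the ignition particle just before it entered $B_i$ gives an edge with $X_\fR(u, v) \le \xi/\ka_0$ that fires at time $\tau_{B_i}$ and produces red via SSP rules 2 or 3, unless $B_i \in \fB_*$ in which case rule 1 overrides with blue. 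If $B_i$ is coloured only via SI-rule (a) from a blue neighbour $B'$, the edge from $B'$ carries the maximal blue clock $\ka(u,v)\xi/\ka_0$ and triggers SSP rule 4, giving blue. By the inductive hypothesis no edge into $v$ from an earlier block fires before time $\tau_{B_i}$, since any such edge either realises the correct time gap or is set to the maximal ``else'' value which strictly exceeds the actual gap. Within-block propagation then spreads the colour to all vertices of $B_i$ simultaneously via zero-weight edges under the $<_{\operatorname{SI}}$ convention, extended appropriately to same-block adjacent pairs.

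Statement \eqref{E:fRinfty} follows directly: if $v \notin \operatorname{IGN}$, then $f(v)$ is never ignited and can only be coloured via SI-rule (a), so by the induction the firing edge into $v$ carries the maximal blue clock $\ka(u,v)\xi/\ka_0$, which colours $v$ blue under SSP rule 4 (or under rule 1 if $v \in \fB_*$). The inclusion $\fR(\infty) \sset \operatorname{IGN} \smin \fB_*$ then follows since blue seeds are always coloured blue by rule 1. The main obstacle I anticipate is the bookkeeping around SI-rule (c) multi-colourings, where a single ignition particle simultaneously colours up to three blocks: the relation $<_{\operatorname{SI}}$ is engineered exactly to orient the simultaneous edges so that the SSP's convention for resolving conflicting simultaneous edge firings (defaulting to blue on ambiguous ties) does not introduce spurious blue colourings, and verifying this case by case is the main detail to settle. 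A secondary technicality is confirming that within-block colour propagation really does reach all vertices of a block instantaneously under the chosen extension of $<_{\operatorname{SI}}$.
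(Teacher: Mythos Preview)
Your overall inductive strategy matches the paper's, and the case analysis is organized correctly. However, there is one genuine gap and one structural misconception worth flagging.

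\textbf{Structural misconception.} The SSP here lives on a copy of $\Z^2$ in which each vertex $u$ corresponds \emph{bijectively} to a block $f(u)\in\sB_L$; the map $f$ is the correspondence $z\mapsto zL+\{-L/2,\dots,L/2-1\}^2$. There is no ``within-block propagation'' and no need to extend $<_{\operatorname{SI}}$ to same-block pairs. Your secondary technicality is a non-issue, and the multi-colouring concern reduces to the Case~3/Case~4 analysis in the paper (a short chain of at most three vertices related by $<_{\operatorname{SI}}$).

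\textbf{The main gap.} Your ``structural observation'' that $X_{C(u)}(u,v)$ equals the time gap $\tau_{f(v)}-\tau_{f(u)}$ whenever the gap is positive is precisely what has to be \emph{proved} in the red case, not assumed. By definition $X_\fR(u,v)=(\tau_{f(v)}-\tau_{f(u)})\wedge \xi/\ka_0$, and this equals the gap only if the gap is at most $\xi/\ka_0$. If the gap were larger, the red clock would fire \emph{early}, giving $T(v)<\tau_{f(v)}$ and breaking the identity you want. Your sentence ``any such edge\ldots is set to the maximal `else' value which strictly exceeds the actual gap'' has the inequality backwards for red edges. The missing step is: if $C(u)=\fR$, then by the inductive hypothesis $f(u)$ is ignited and $f(u)\notin\fB_*$, hence the event $\cA_{f(u)}$ holds, which guarantees an infected particle reaches every neighbouring block within time $\xi/\ka_0$; this forces $\tau_{f(v)}-\tau_{f(u)}<\xi/\ka_0$. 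This use of $\cA_{f(u)}$ is the one substantive point in the argument and is exactly what the paper's Case~1 supplies. Relatedly, the block from which the ignition particle emerges need not itself be red (it could be an ignited blue seed), so one must also check that the blue clock handles this case; it does, via SSP rule~3, since $X_\fB(u,v)=\tau_{f(v)}-\tau_{f(u)}<\ka(u,v)\xi/\ka_0$.

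A smaller point: your finite-speed argument is inverted. Rule~(a) gives an \emph{upper} bound on $\tau_B$, not a lower one; the lower bound (needed to rule out infinitely many colourings in finite time) comes from Theorem~\ref{thm:KS.upper.bound}.
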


The proof of Proposition \ref{P:SI-to-BR} is a straightforward check that all the definitions match up. It is somewhat lengthy, so we leave it to Section \ref{S:SI-to-BR}.

Given Proposition \ref{P:SI-to-BR} and Corollary \ref{C:stoch-dom}, our next aim is to apply Theorems \ref{T:random-red-blue} and \ref{T:random-red-blue-var-ka}. At this point, we will need to consider interactions between different SSPs, see Figure \ref{fig:second-argument} for the basic idea. Let $(\fR, \fB)[L, \chi, r, R]$ denote the SSP in Proposition \ref{P:SI-to-BR} with parameters $L, \chi, r, R$. For the remainder of this section, all SSPs will be coupled \emph{to the same} SI process $X_t$.

\begin{figure}
	\centering
		\includegraphics[width=4.7cm]{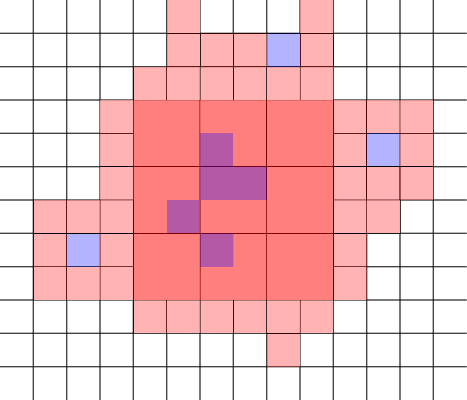}
	
\caption{We want to show that some SSP associated to $X_t$ at a fixed scale $L$ survives forever. We first find a scale large enough so that an associated SSP at that scale survives. We can do this since blue seed density decreases to $0$ with the scale by Proposition \ref{p:blueSeed}. We then look at the interaction between this coarse initial scale and a finer scale. By forcing the fine scale process to move slowly initially, we can guarantee that the red process at the fine scale contains the red process at the coarser scale, up to some fine scale blue seeds. This gives the fine scale process a good start even though there may be fine scale blue seeds near $0$. This is illustrated in the figure, where dark red represents the coarser scale, and light red and blue the finer scale. This allows the fine scale process to survive forever and eventually move faster than the coarse scale process. We can keep dropping down in scale until we reach the desired fixed scale.}
	\label{fig:second-argument}
\end{figure}

\begin{lemma}
\label{L:adjacent-boxes}
Consider fine and coarse processes
 $$
 (\fR_1, \fB_1) := (\fR, \fB)[L, 8\chi, r_1, R_1], \qquad (\fR_2, \fB_2):= (\fR, \fB)[2L, \chi, r_2, R_2].
 $$
 Suppose that $R_2 + 2L \le r_1$, and let 
 $$
 \tau = \min \{t \in [0, \infty) : f_L(\fR_1(t) \cup \fB_1(t)) \not \sset D(0, R_1) \}.
 $$
 In words, $\tau$ is the (stopping) time when the the fine colouring process exits the disk $D(0, R_1)$.
 
 Think of $(\fR_i, \fB_i)$ as being defined on a copy $\Z^2_i$ of $\Z^2$ and let $P:\Z^2_1 \to \Z^2_2$ be given by $P(x, y) = (\floor{x/2}, \floor{y/2})$. Then for $u \in \Z^2$, if $Pu \in \fR_2(t)$ and $t + \xi_{2L}/\ka_0 \le \tau$, then either $u \in \fR_1(t +\xi_{2L}/\ka_0)$, or else $u$ is a blue seed for $(\fR_1, \fB_1)$.
\end{lemma}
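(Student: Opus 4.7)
The plan is to chain two applications of Proposition~\ref{P:SI-to-BR} through Lemma~\ref{L:lower-scale-ignition}, with the common underlying SI process serving as the bridge between the two scales.

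First I translate $Pu \in \fR_2(t)$ into concrete information about the scale-$2L$ colouring process. Combining the equality $\fR_2(t)\cup \fB_2(t) = \{v: \tau_{f_{2L}(v)}\le t\}$ from~\eqref{E:BR-tau-eqn} with the containment $\fR_2(\infty)\sset \operatorname{IGN}\smin \fB_*$ of Proposition~\ref{P:SI-to-BR}, applied at scale $2L$, I conclude that the coarse block $f_{2L}(Pu)$ is ignited in the scale-$2L$ colouring with $\tau_{f_{2L}(Pu)}\le t$, and that $Pu$ is not a scale-$2L$ blue seed, so the event $\cA_{f_{2L}(Pu)}$ defined in~\eqref{E:cA123} must hold.

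Next I apply Lemma~\ref{L:lower-scale-ignition} at scale $2L$ with $B = f_{2L}(Pu)$ and $B' = f_L(u)$. Since $f_L(u)$ is one of the four scale-$L$ sub-blocks sitting inside $f_{2L}(Pu)$, one has $d(f_{2L}(Pu),f_L(u)) = 0\le 1$, so the conjunct $\cA^{(2)}_{f_{2L}(Pu),f_L(u),x}$ is part of $\cA_{f_{2L}(Pu)}$. The lemma then gives that $f_L(u)$ is coloured in the scale-$L$ colouring process by time $\tau_{f_{2L}(Pu)} + \xi_{2L}/(2\ka_0) \le t + \xi_{2L}/\ka_0$. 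Feeding this into Proposition~\ref{P:SI-to-BR} at scale $L$ via~\eqref{E:BR-tau-eqn} yields $T_1(u) \le t + \xi_{2L}/\ka_0$, and thus $u\in \fR_1(t+\xi_{2L}/\ka_0)\cup \fB_1(t+\xi_{2L}/\ka_0)$. The hypothesis $t+\xi_{2L}/\ka_0\le \tau$ is used here to ensure that the scale-$L$ colouring remains inside $D(0,R_1)$ throughout this window, so the SSP-to-colouring dictionary of Proposition~\ref{P:SI-to-BR} applies without boundary effects.

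It remains to settle the red-or-blue-seed dichotomy. Suppose $u\notin \fR_1(t+\xi_{2L}/\ka_0)$; I want to show that $u$ is a blue seed. The principal particle produced in the proof of Lemma~\ref{L:lower-scale-ignition} physically enters $f_L(u)$ inside the stated time window, so $f_L(u)$ must be ignited in the scale-$L$ colouring, i.e.\ $u\in \operatorname{IGN}$. Reading \eqref{E:fRinfty} contrapositively at scale $L$, an ignited non-seed vertex must lie in $\fR_1(\infty)$, which combined with $T_1(u)\le t+\xi_{2L}/\ka_0$ would place $u$ in $\fR_1(t+\xi_{2L}/\ka_0)$ and contradict our assumption. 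Hence $u\in \fB_*$, i.e.\ $u$ is a blue seed for $(\fR_1,\fB_1)$, giving the remaining branch of the stated dichotomy. The main obstacle is precisely this last step: one must make rigorous the claim that an ignited non-seed block becomes red rather than blue in the SSP of Section~\ref{SS:linear-growth}. Tracing the definitions of $X_\fR$ and $X_\fB$ shows that the only way $f_L(u)$ could receive a blue label without being a seed is via a rule-$4$ invasion $X_\fB(u',u) = \xi_L\ka(u',u)/\ka_0$, which forces $\tau_{f_L(u)} - \tau_{f_L(u')}$ to equal the maximum rule-(a) delay exactly. Since $\tau_{f_L(u)}$ is an infected-particle arrival time, this coincidence is a null event, so the required implication holds almost surely, as already built into Proposition~\ref{P:SI-to-BR}.
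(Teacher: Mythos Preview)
Your argument has a genuine gap at the step ``the principal particle physically enters $f_L(u)$ inside the stated time window, so $f_L(u)$ must be ignited in the scale-$L$ colouring.'' The entry of an infected particle into $f_L(u)$ only ignites that block if the block was not already coloured. It is entirely possible that the fine-scale blue process has reached $f_L(u)$ via rule~(a) \emph{before} the coarse-scale infected particle arrives, in which case $f_L(u)$ is coloured but not ignited, and the null-event argument in your last paragraph does not apply (since then $\tau_{f_L(u)}$ is not an arrival time but a rule-(a) time). Your attempt to read \eqref{E:fRinfty} contrapositively is also off: that containment gives $\operatorname{IGN}^c \subset \fB(\infty)$, not $\operatorname{IGN}\setminus\fB_* \subset \fR(\infty)$, so it cannot rule out an ignited non-seed vertex being blue without further work.

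This is exactly the obstruction the hypotheses $R_2 + 2L \le r_1$ and $t+\xi_{2L}/\ka_0\le\tau$ are there to remove, and you never use the first one. The paper's proof proceeds in two stages: it first shows that the fine coloured region is $P$-contained in the coarse one for times before $\tau$, using that the fine rule-(a) delays satisfy $\xi_L\ka_1(u,v)/\ka_0 \ge 2\,\xi_{2L}\ka_2(Pu,Pv)/\ka_0$ (this is where $R_2+2L\le r_1$ enters). Then, assuming $v$ is blue but not a seed, it traces back along the blue edge $(u,v)$ that coloured $v$, combines this rate inequality with $T_1(v)\le T_2(w)+\xi_{2L}/\ka_0$ to force $T_1(u)<T_2(Pu)$, and contradicts the first-stage containment. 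The rate comparison is what prevents the fine blue process from racing ahead and pre-colouring $f_L(u)$; without it the dichotomy you want simply need not hold.
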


In the proof, we let $T_i:\Z^2_i \to [0, \infty)$ denote the colouring time map for $(\fR_i, \fB_i)$, let $\ka_i$ denote the parameter, and let $\fB_{i, *}$ denote the blue seed set.
\begin{proof}
Let $t$ be the first time when
\begin{equation}
\label{E:P-contained}
P(\fR_1(t) \cup \fB_1(t)) \not \sset \fR_2(t) \cup \fB_2(t).	
\end{equation}
We first check that $t \ge \tau$. First observe that if $u \in \Z^2$ is such that $f_L(u)$ was \emph{ignited} at some time $s \le t$, then $f_{2L}(Pu)$ would have also been ignited at time $s$, if it was not coloured prior to that time. Therefore the containment in \eqref{E:P-contained} can only be broken if a vertex $u$ was added to $\fR_1(t) \cup \fB_1(t)$ without $f_L(u)$ being ignited. By Proposition \ref{P:SI-to-BR}, this implies that at time $t$, \eqref{E:P-contained} must hold because of the spread of $\fB_1$ along an edge $(u, v)$ with $u \in \fB_1(s)$ for some $s < t$, and $Pu \ne Pv$.

Now, if $t < \tau$, then $(u, v)$ is necessarily in the region where 
\begin{equation}
\label{E:xiL-kappa}
\xi_L \ka_1(u, v)/\ka_0 \ge 2 \xi_{2L} \ka_2(Pu, Pv)/\ka_0.
\end{equation}
Note that this requires the bound $R_2 + 2L < r_1$ which implies that the finer colouring process $(\fR_1, \fB_1)$ does not start using its medium-speed clocks in the annulus $A(0, r_1, R_1)$ until the region where the coarser process $(\fR_2, \fB_2)$ has reached its fastest speed.

Moreover, we have the colouring time inequality $T_2(Pu) \le T_1(u)$, since otherwise we would have a smaller value of $t$ in \eqref{E:P-contained}. Combining these facts implies that $Pv$ is coloured (either red or blue) prior to when $v$ was coloured, which is a contradiction. Hence $t \ge \tau$.

Next, let $s$ be the first time when 
$$
P^{-1} \fR_2(s) \not \sset \fB_{1, *} \cup \fR_1(s + \xi_{2L}/\ka_0).
$$
To prove the lemma, we just need to check that $s + \xi_{2L}/\ka_0 \ge \tau$. 

Suppose not. For some $w \in \fR_2(s)$ and $v \notin \fB_{1, *} \cup \fR_1(s + \xi_{2L}/\ka_0)$, we have $P v  = w$. By Proposition \ref{P:SI-to-BR}, the box $f_{2L}(w)$ is ignited and is not a blue seed. By Lemma \ref{L:lower-scale-ignition}, this guarantees that all four blocks in $f_L P^{-1} w$ will have an infected particle in them prior to time $T_2(w) + \xi_{2L}/\ka_0$, ensuring that $v$ is coloured in $(\fR_1, \fB_1)$ prior to time $T_2(w) +\xi_{2L}/\ka_0$. Hence $v \in \fB_1(s + \xi_{2L}/\ka_0)$ since $v \notin \fR_1(s + \xi_{2L}/\ka_0)$.

Since $v$ is not a blue seed, it must have been coloured blue by the firing of a clock on an edge $(u, v)$ where $u$ is blue in $(\fR_1, \fB_1)$ and
\begin{equation}
\label{E:T1u}
T_1(u) + \xi_L \ka_1(u, v)/\ka_0  = T_1(v) \le T_2(w) +  \xi_{2L}/\ka_0.
\end{equation}
Now, using that $s + \xi_{2L}/\ka_0 \le \tau$, again the inequality \eqref{E:xiL-kappa} holds for the edge $(u, v)$. By this inequality, \eqref{E:T1u}, and the fact that $T_2(w) \le T_2(Pu) + \xi_{2L} \ka_2(Pu, w)/\ka_0$, we get that $T_1(u) < T_2(Pu)$. This implies \eqref{E:P-contained} with $t = s$, which contradicts that $s < \tau$.
\end{proof}

\begin{theorem}
\label{T:linear-speed-RB-process}
For all small enough $p > 0$, for any $\ep > 0$ we can find parameters $r = r(\ep), R = R(\ep), \chi = \chi(\ep)$ such that the process
$$
(\fR_1, \fB_1) :=(\fR, \fB)[L_p, \chi, r, R]
$$
satisfies all the conditions of Theorem \ref{T:random-red-blue-var-ka} with inner radius $3r \chi/(\xi_{L_p} L_p)$, outer radius $R/L_p$, and event $\cE$ satisfying $\P [\cE] \ge 1 - \ep$. Here $L_p$ is defined as in Corollary \ref{C:stoch-dom}.
\end{theorem}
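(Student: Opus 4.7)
Conditions~(1) and~(2) of Theorem~\ref{T:random-red-blue-var-ka} are essentially built into the construction. The piecewise definition of $\ka$ at scale $L_p$ gives $\ka>4000$ everywhere and $\ka\equiv\ka_0$ outside vertex-distance $R$ of the origin, yielding~(1). For~(2), the stated inner radius is chosen to dominate, in vertex distance, the block-distance $r\chi/\xi_{L_p}$ threshold appearing in Corollary~\ref{C:stoch-dom}; that corollary then provides the required stochastic domination of $\fB_*\cap D(0,r_{\text{inner}}/2)^c$ by an i.i.d.\ Bernoulli($\ep$) process, so the theorem's $p$ may be taken equal to $\ep$.

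The substantive task is to verify condition~(3): producing, with probability $\ge 1-\ep$, a connected set $V$ with $D(0,r_{\text{inner}}/2)\sset V^c\sset D(0,r_{\text{inner}})$, $\del V\cap[\fD]=\emptyset$, and $\del V\sset \fR_1(\infty)$. I would couple $(\fR_1,\fB_1)$ to a much coarser companion SSP $(\fR',\fB'):=(\fR,\fB)[L',\chi',0,0]$ at scale $L'=2^k L_p$, driven by the same underlying SI process $X_t$. By Proposition~\ref{p:blueSeed} the blue-seed probability at scale $L'$ tends to $0$ as $k\to\infty$, so for $k$ large enough the hypotheses of Theorem~\ref{T:random-red-blue} (the $V=\Z^2$ version) hold at the coarse scale. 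That theorem supplies an event $\cE_0$ with $\P(\cE_0)\ge 1-\ep/3$ on which $0\notin[\fD']$, all components of $[\fC']$ are bounded, and $[\fC']^c\sset \fR'(\infty)$, so in particular $D(0,t/(2\ka_0))\sset[\fR'(t)]$ for all large~$t$.

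To transport this survival down to scale $L_p$, I would iterate Lemma~\ref{L:adjacent-boxes} through the dyadic chain of scales $L_p,2L_p,4L_p,\dots,L'$, choosing intermediate parameter triples so that $\chi$ multiplies by~$8$ at each drop in scale (as dictated by that Lemma) and the nested inclusion $R_{\text{coarser}}+2L_{\text{coarser}}\le r_{\text{finer}}$ holds at every step, with $R_1=R$ and $r_1=r_{\text{inner}}$ at the finest scale. A telescoped application of Lemma~\ref{L:adjacent-boxes} then yields that on $\cE_0$ the fine red region $\fR_1(\infty)$ contains the preimage of $\fR'(\infty)$ under the iterated dyadic projection, minus a union of blue seeds at intermediate scales; each such seed lies inside $[\fD]$ at its own scale, and hence, after projecting, inside $[\fD]$ for $(\fR_1,\fB_1)$.

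Finally, to locate $V$: apply estimate~\eqref{E:Mn-BC} of Lemma~\ref{L:largest-scale} to the fine-scale set $[\fD]$. By taking $R$ (and hence $r_{\text{inner}}$) sufficiently large after $\ep$ is fixed, with probability $\ge 1-\ep/3$ every component of $[\fD]$ meeting the annulus $A(0,r_{\text{inner}}/2,r_{\text{inner}})$ has diameter far smaller than $r_{\text{inner}}/4$. The annulus therefore contains a lattice cycle $\del V$ avoiding both $[\fD]$ and the finitely many intermediate-scale engulfing sets, and by the coarse-process coupling this cycle lies in $\fR_1(\infty)$. A union bound over the three events yields $\P(\cE)\ge 1-\ep$. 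The main obstacle is the parameter bookkeeping: a single choice of $(\chi,r,R)$ at scale $L_p$ must induce consistent parameter triples at all $k$ intermediate scales satisfying the nested inequalities, and the cumulative time-delay $\sum_i \xi_{2^i L_p}/\ka_0$ introduced at each application of Lemma~\ref{L:adjacent-boxes} must not outpace the coarse red process before it covers the required annulus.
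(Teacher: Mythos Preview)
Your overall architecture is right and matches the paper: start a coarse SSP at scale $L'=2^kL_p$ where Theorem~\ref{T:random-red-blue} applies, then descend through the dyadic chain using Lemma~\ref{L:adjacent-boxes}, and find a separating boundary $\del V$ in the target annulus. But the execution contains a genuine gap.

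The step ``each such seed lies inside $[\fD]$ at its own scale, and hence, after projecting, inside $[\fD]$ for $(\fR_1,\fB_1)$'' is false. The engulfing set $[\fD]$ at scale $L_p$ is built solely from the blue-seed configuration $\fB_{\clubsuit}$ at scale $L_p$; there is no containment relation between it and the projection of $[\fD^{(i)}]$ at an intermediate scale $L^i$. So when you telescope Lemma~\ref{L:adjacent-boxes}, the defect set you accumulate---preimages of intermediate-scale blue seeds---is not absorbed by the fine-scale $[\fD]$, and your cycle $\del V$ need not avoid it. Applying Lemma~\ref{L:largest-scale} to the fine-scale $[\fD]$ alone does not help.

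The paper resolves this by \emph{not} telescoping. Instead it proves inductively that each intermediate process $(\fR_i,\fB_i)$ itself satisfies the full conclusion of Theorem~\ref{T:random-red-blue-var-ka}. The inductive step uses the interacting-pairs machinery of Section~\ref{SS:interacting-pairs}: Lemma~\ref{L:random-two-seed-sets} controls the union $\fF_i=P[\fD_i]\cup[\fD_{i+1}]$ of engulfing sets at \emph{two adjacent} scales simultaneously, so one can find $\del W_i$ avoiding both; then Lemma~\ref{L:interacting-RB} combined with the inductive hypothesis at level $i{+}1$ gives $P\del V_i\sset\fR_{i+1}(\infty)$. This two-scale control is exactly what your direct telescoping misses. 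The time-constraint issue you flag is handled separately by choosing $R_i$ large enough that $\P(\tau_i\ge\sigma_i)\ge 1-\ep/(5k)$, invoking Theorem~\ref{thm:KS.upper.bound} for the upper bound on the SI growth.
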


\begin{proof}
First, let $c$ be an absolute constant larger than every constant from Section \ref{S:SSP}. Let $p$ be chosen small enough so that $p \le p_0$ in Theorem \ref{T:random-red-blue-var-ka} and so that $c p < 1$. Let $\ep > 0$, and let $\de  > 0$ be small enough so that $1 - c \de > 1 - \ep/4$. Let $\{L^1, \dots, L^k \} = \{2^k : k \in \N\} \cap [L_p, L_\de]$. Then the SSP
$$
(\fR_k, \fB_k) := (\fR, \fB)[L^k, 8\xi_{L^k}, 0, 0]
$$
has a blue seed set $\fB_*$ that is stochastically dominated by an i.i.d.\ Bernoulli process with parameter $\de$ by Corollary \ref{C:stoch-dom}, and so by Theorem \ref{T:random-red-blue}, the event
\begin{equation}
\label{E:A0-construct}
\cE_0 := \{[\fC_k]^c \sset \fR_1(\infty)^c, \; [\fC_k] \text{ has only bounded components} \}
\end{equation}
has probability probability at least $1 - \ep/2$. Here and throughout the proof we use subscripts on objects, e.g. $\fC_i, \fD_i$, to denote that they are associated to a particular SSP $(\fR_i, \fB_i)$. We will now recursively construct processes 
$$
(\fR_i, \fB_i) := (\fR, \fB)[L^i, \chi_i, r_i, R_i]
$$
where the parameters are chosen according to the following rules. Set $r_k = R_k = 0, \chi_k = 8\xi_{L^k}$, and for $i < k$, we require that
\begin{itemize}
	\item $\chi_i = 8 \chi_{i+1}$
	\item $r_i$ is chosen large enough given $R_{i+1}$ so that $r_i \ge R_{i+1} + L^{i+1}c,$ and
	\begin{equation}
	\label{E:exp-lf-rg}
	\exp \lf( \log (cp) \exp\lf(\frac{\log (\chi_ir_i/(\xi_{L^i}L^{i+1}))}{c \log \log (\chi_ir_i/(\xi_{L^i}L^{i+1}))}\rg)\rg) \le \frac{\ep}{5k}.
	\end{equation}
	\item For each $i$, let
	\begin{align*}
\tau_i &= \min \{t \in [0, \infty) : f_{L^i}(\fR_i(t) \cup \fB_i(t)) \not \sset D(0, R_i) \} \\
\sig_i &= \min \{t \in [0, \infty) : D(0, 3\chi_ir_i/(\xi_{L^{i}} L^{i+1})) \sset \fR_{i+1}(t) \cup \fB_{i+1}(t) \}.
	\end{align*}
	Then $R_i$ is chosen large enough given $r_i$ so that $R_i \ge 3 \chi_i r_i/\xi_{L^i}$ and 
	\begin{equation}
	\label{E:tau-sig}
	\P( \tau_i \ge \sig_i) \ge 1 - \ep/(5k).
	\end{equation}
\end{itemize}
Since $cp < 1$, we can clearly choose $\chi_i, r_i$ given $\chi_{i+1}, r_{i+1}, R_{i+1}$ that guarantee the first two bullet points above. To see that we can guarantee the third bullet point, first note that $\sig_i$ is deterministically bounded above for a fixed $\chi_i$ by the lower bound on the growth of $\fR_i(t) \cup \fB_i(t)$ from the definition of the clocks $X_{\fR_i}, X_{\fB_i}$. Moreover, the growth bound on the SI model in Theorem \ref{thm:KS.upper.bound} guarantees that $\E \tau_i \to \infty$ with $R_i$. This implies \eqref{E:tau-sig} for large enough $R_i$.  

Next, for each $i$ define
$$
\fB_{\clubsuit, i} = \fB_{*, i} \cap f_{L^i}^{-1} (D(0, r_i \chi_i/\xi_{L^i})^c) \sset \Z^2.
$$
By Corollary \ref{C:stoch-dom}, all of the sets $\fB_{\clubsuit, i}$ are stochastically dominated by i.i.d.\ Bernoulli processes with parameter $p$. Let $\fD_i$ be defined from $\fB_{\clubsuit, i}$ as in the statement of Theorem \ref{T:random-red-blue-var-ka}, and let $\fF_i = P \fD_i \cup \fD_{i+1}$ be as in Lemma \ref{L:random-two-seed-sets}. Then by the bound on $P_{a,b}$ in Lemma \ref{L:random-two-seed-sets} and the bound in \eqref{E:exp-lf-rg}, with probability at least $1 - \ep/(5k)$ we can find a simply connected set $W_i \sset \Z^2$ separating $\infty$ from $0$ such that
$$
D(0, 2\chi_i r_i/(\xi_{L^i} L^{i+1})) \sset W_i^c \sset D(0, 3\chi_i r_i/(\xi_{L^i}L^{i+1}))
$$
and such that $\del W_i \cap [\fF_i] = \emptyset$. We let $V_i = P^{-1} W_i$, so that 
\begin{equation}
\label{E:DV-containment}
D(0, 2\chi_i r_i/(\xi_{L^i} L^{i})) \sset V_i^c \sset D(0, 3\chi_i r_i/(\xi_{L^i}L^{i}))
\end{equation}
Let $\cB_i$ be the event where such a $V_i$ exists, and be $\cC_i$ be the event in \eqref{E:tau-sig}. For the remainder of the proof, we work on the intersection $\cE$ of all the events $\cE_0, \cB_i, \cC_i, i = 1, \dots, k - 1$, which has probabability at least $1 - \ep$ by a union bound. We will inductively show that on this event, all of the processes
$(\fR_i, \fB_i), i =1, \dots, k-1$ satisfy the conditions of Theorem \ref{T:random-red-blue-var-ka} with $\cE, V_i$ as above, $r = 3r_i \chi_i/(\xi_{L^i} L^{i})$, and $R = R_i/L^i$. The conclusion of Theorem \ref{T:linear-speed-RB-process} is the case when $i = 1$.

We start with the inductive step. Let $i < k - 1$, and suppose that  Theorem \ref{T:random-red-blue-var-ka} holds for $(\fR_{i+1}, \fB_{i+1})$. We want to check the conditions of that theorem for $(\fR_i, \fB_i)$. The first two bullet points of that theorem are clear, so we just need to check the third bullet point. Our goal will be to appeal to Lemma \ref{L:interacting-RB}.

First, the first two bullet points above guarantee the pair $(\fR_i, \fB_i), (\fR_{i+1}, \fB_{i+1})$ satisfies the conditions of Lemma \ref{L:adjacent-boxes}. By that lemma, we have that
 $$
 P^{-1} \fR_{i+1}(t) \sset \fB_{i, *} \cup \fR_1(t + \xi_{L^{i+1}}/\ka_0)
 $$
 for all $t \le \tau_i$. Since $\sig_i \le \tau_i$ on the event $\cE$, this implies that
 \begin{equation}
 \label{E:fR-fRi}
 (P^{-1} \fR_{i+1}(\infty)) \cap D(0, 3\chi_ir_i/(\xi_{L^{i}} L^{i}))  \sset \fB_{i, *} \cup \fR_1(\infty).
 \end{equation}
 This implies that the pair $(\fR_i, \fB_i), (\fR_{i+1}, \fB_{i+1})$ satisfies the interaction assumption of Lemma \ref{L:interacting-RB} with $r = 3\chi_ir_i/(\xi_{L^{i}} L^i)$. Moreover, the set $V_i$ that we have defined satisfies
 $$
 	V_i^c \sset D(0, 3\chi_ir_i/(\xi_{L^{i}} L^i)),\qquad \del V_i \cap [\fD_i] = \emptyset 
 	$$
 	so to appeal to Lemma \ref{L:interacting-RB}, we just need to check that $ P \del V_i \sset \fR_{i+1}(\infty)$. First, $P \del V_i \sset \del W_i$, so it suffices to prove the same claim for $\del W_i$. Next, on the event $\cE$, by the inductive hypothesis we have that 
 	\begin{equation}
 	\label{E:Vi+1}
V_{i+1} \cap [\fC_{i+1}]^c  \sset \fR_{i+1}(\infty),
 	\end{equation}
 	so it suffices to check that 
$
\del W_i \sset V_{i+1} \cap [\fC_{i+1}]^c.
$
 By construction, $\del W_i$ avoids $[\fC_{i+1}]$, so we just need to show that $\del W_i \sset V_{i+1}$. Indeed, this follows from the bound \eqref{E:DV-containment} at levels $i$ and $i + 1$, and the fact that $3\chi_i r_{i+1}/\xi_{L^{i+1}} \le R_{i+1} \le r_i$. This completes the proof of the inductive step.
 
 The base case for $i = k - 1$ is essentially the same. The only difference is that instead of appealing to the inductive hypothesis to get \eqref{E:Vi+1}, instead we can simply appeal to the fact that we are working on $\cE_0$, see \eqref{E:A0-construct}. 
\end{proof}

We summarize the important conclusions of Theorem \ref{T:linear-speed-RB-process} and Theorem \ref{T:random-red-blue-var-ka} in the next corollary for the convenience of the reader.

\begin{corollary}
	\label{C:for-next-sect}
	For all small enough $p > 0$, for every $\ep > 0$ there exist finite parameters $\chi, r, R$ such that the SSP
	$
	(\fR, \fB) := (\fR, \fB)[L_p, \chi, r, R]
	$
	satisfies the following conclusions:
	\begin{enumerate}[label=(\roman*)]
		\item The subset of blue seeds 
		$$
		\fB_\clubsuit = \fB_* \cap D(0, r \chi/(\xi_{L_p} L_p))
		$$
		 is stochastically dominated by an i.i.d.\ Bernoulli process with parameter $p$.
		 \item Letting the set $\fD$ be defined from $\fB_\clubsuit$, there exists an event $\cE_\ep$ with $\P[ \cE_\ep] \ge 1 - \ep$ such that on $\cE_\ep$,
		 $$
		 [\fD]^c \cap D(0, R/L_p)^c \sset \fR(\infty)
		 $$
		 and all components of $\fB(\infty)$ are bounded.
		 \item There exists a constant $\de > 0$ that depends on $p$ but not on $\ep, \chi, r,$ or $R$ such that on $\cE_\ep$, $D(0, \de t) \sset [\fR(t)]$ for all large enough $t$.
	\end{enumerate}
\end{corollary}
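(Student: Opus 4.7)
The plan is to combine Theorem \ref{T:linear-speed-RB-process}, Theorem \ref{T:random-red-blue-var-ka}, and Corollary \ref{C:stoch-dom}: Theorem \ref{T:linear-speed-RB-process} already does the main work of verifying the hypotheses of Theorem \ref{T:random-red-blue-var-ka} for the desired SSP, and the three claims should then follow essentially by reading off the conclusions, with no new probabilistic argument required.

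First I would fix $p > 0$ small enough that both Theorem \ref{T:linear-speed-RB-process} and Corollary \ref{C:stoch-dom} apply at scale $L = L_p$ with Bernoulli domination parameter $p$. For any $\ep > 0$, Theorem \ref{T:linear-speed-RB-process} then supplies parameters $\chi, r, R$ (each depending on $\ep$) so that $(\fR, \fB) := (\fR, \fB)[L_p, \chi, r, R]$ satisfies the hypotheses of Theorem \ref{T:random-red-blue-var-ka} with inner radius $3r\chi/(\xi_{L_p} L_p)$, outer radius $R/L_p$, and an event $\cE_\ep$ of probability at least $1 - \ep$, together with an associated random connected set $V$. Claim (i) is then immediate from Corollary \ref{C:stoch-dom} applied at scale $L_p$, after noting that the cutoff defining $\fB_\clubsuit$ agrees (up to the $L_p$ rescaling between block and lattice coordinates) with the cutoff appearing in that corollary.

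For (ii), Theorem \ref{T:random-red-blue-var-ka} yields on $\cE_\ep$ that $[\fC]^c \cap V \sset \fR(\infty)$ and all components of $\fB(\infty)$ are bounded. Since $\fC \sset \fD$ implies $[\fD]^c \sset [\fC]^c$, and since the construction in the proof of Theorem \ref{T:linear-speed-RB-process} enforces $R \ge 3 r \chi/\xi_{L_p}$, yielding $D(0, R/L_p)^c \sset D(0, 3r\chi/(\xi_{L_p} L_p))^c \sset V$, these inclusions combine to give $[\fD]^c \cap D(0, R/L_p)^c \sset \fR(\infty)$ as required. Claim (iii) is then the final conclusion of Theorem \ref{T:random-red-blue-var-ka}: on $\cE_\ep$, $D(0, t/(3\ka_0)) \sset [\fR(t)]$ a.s.\ for all large enough $t$, so I would take $\de := 1/(3\ka_0)$, which depends only on the universal $\ka_0$ and thus is independent of $\ep, \chi, r, R$.

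The only genuinely nontrivial step is the bookkeeping needed to match the two different parameterisations: the SI colouring process is indexed by lattice distances while the SSP lives on a coarser block lattice, so rescalings by $L_p$ appear whenever one translates between radii in the two viewpoints. All such rescalings are built into the conclusions of Theorem \ref{T:linear-speed-RB-process}, so once this matchup is verified the corollary follows immediately.
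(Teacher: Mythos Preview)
Your proposal is correct and matches the paper's approach exactly: the paper presents this corollary explicitly as a summary of Theorem \ref{T:linear-speed-RB-process} and Theorem \ref{T:random-red-blue-var-ka} with no separate proof, and you have correctly traced each conclusion back to its source. One small bookkeeping point: because the SSP here carries the time rescaling by $\xi_{L_p}/\ka_0$ from Proposition \ref{P:SI-to-BR}, the constant $\de$ in (iii) picks up a factor of $\xi_{L_p}$ rather than being exactly $1/(3\ka_0)$, but this still depends only on $p$ and not on $\ep, \chi, r, R$, so your conclusion stands.
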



\subsection{Infection spread after colouring}

In this subsection we establish that once a block has been coloured, all its particles are infected after a short time with high probability.  Let us consider a fixed $p > 0$ and side length $L_p$ satisfying the conditions of Corollary \ref{C:for-next-sect}. Fix $\ep > 0$, and let all parameters and sets (e.g. $\cE_\ep, R, \fD$) be as in Corollary \ref{C:for-next-sect}.
Let $\sD=\sD_{B,r}$ denote the set of blocks in $\sB_L$ contained in $D(B,r)$. Define the event
\[
\cJ_{B,r} =\bigg \{|\{B'\in \sD_{B,r}\cap f_L[\fD]\}| \leq \frac13 |\sD_{B,r}| \bigg\}.
\]
We can bound $\P[ \cJ_{B,r}]$ by \eqref{E:PDxr} in Lemma \ref{L:largest-scale}.
%
We use this to show that there are enough infected particles from red blocks to infect all particles quickly. 
\begin{proposition}\label{p:particleInfectTime}
Let $\cK_B$ be the first time that all particles coloured in $B$ are infected.  For any $C_1>0$ there exists $C_2>0$ depending on $p, \ep$ such that for any block $B$ we have
\[
\P[\cK_B - \tau_B \geq y, \cE_\ep] \leq C_2 y^{-C_1} \qquad \text{ for all } y > 0.
\]
\end{proposition}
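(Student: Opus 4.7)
We bound $\P[\cK_B - \tau_B \ge y, \cE_\ep]$ by showing that on $\cE_\ep$ a ball of radius $r := y^{1/2}\log y$ around $B$ fills with a persistent density of infected particles shortly after $\tau_B$, and then invoking a classical 2D random-walk meeting-time estimate to force every particle in $H_B$ to be infected by time $\tau_B + y$. Every error in the argument will decay faster than any polynomial in $y$, yielding the claim for arbitrary $C_1$; the bound for small $y$ is trivial by taking $C_2$ large.

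\textbf{Building the sea of infected particles.} First reduce to $|H_B| \le y$ using Lemma~\ref{l:HBsize}, which dominates $|H_B|$ by a Poisson variable of mean $\tfrac32\mu L^2$ and hence gives super-polynomial control on $\P[|H_B|>y]$. By \eqref{E:PDxr}, $\P[\cJ_{B,r}^c]$ decays exponentially in a power of $r$, and so super-polynomially in $y$. On $\cJ_{B,r}\cap \cE_\ep$ and for $y$ large enough that $r$ exceeds the fixed outer radius from Corollary~\ref{C:for-next-sect}, at least $\tfrac23|\sD_{B,r}|$ blocks in $\sD_{B,r}$ avoid $f_L[\fD]$ and hence lie in $\fR(\infty)$ by Corollary~\ref{C:for-next-sect}(ii). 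Since blocks in $\fR(\infty)$ are coloured exactly at their ignition time and Definition~\ref{D:SI-colouring}(a) spreads colouring across $\sD_{B,r}$ in time at most $\chi(r/L)$, every such block is ignited by time $\tau_B + y/3$ once $y$ is large. Each ignited block contributes an infected ignition particle, so $D(B, r)$ contains $\gtrsim r^2/L^2 = \Omega(y\log^2 y/L^2)$ infected particles at that time, and standard 2D random walk tail bounds ensure that they remain in $D(B, 2r)$ throughout the window $[\tau_B + y/3, \tau_B + y]$ with super-polynomial probability, maintaining a density $\rho \gtrsim L^{-2}$ throughout this window.

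\textbf{Meeting time and conclusion.} Fix $a \in H_B$. Its trajectory, a continuous-time simple random walk at rate at most $\beta\vee 1$, stays inside $D(B, r)$ throughout $[\tau_B, \tau_B + y]$ with probability at least $1 - y^{-c\log y}$ by Gaussian tail bounds. On this event $a$ is embedded in a density-$\rho$ sea of independent moving infected particles; the classical meeting-time estimate for independent random walks in $\Z^2$ (obtained via hitting probabilities for the relative walks, which yield an expected number of meetings of order $\rho t/\log t$ by time $t$) gives that $a$ fails to meet any of them within time $y/2$ with probability at most $\exp(-c\rho y/\log y) = \exp(-cy/(L^2 \log y))$. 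Union bounding over $a \in H_B$ and summing the bad events,
\[
\P[\cK_B - \tau_B \ge y, \cE_\ep] \le \P[|H_B|>y] + \P[\cJ_{B,r}^c] + y\bigl(y^{-c\log y} + \exp(-cy/(L^2 \log y))\bigr),
\]
which decays super-polynomially in $y$ for large $y$, giving the required polynomial bound.

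\textbf{Main obstacle.} The crux is maintaining the density $\rho \gtrsim L^{-2}$ throughout the critical window despite the diffusion of ignition and principal particles and the fact that the events $\cA_{B'}^{(2)}$ produce only one freshly infected principal per block. This is handled by aggregating over all $\Omega(y\log^2 y/L^2)$ ignited blocks in $\sD_{B,r}$ and jointly tracking diffusive spread; the probability that too many nearby $\cA_{B'}$ events fail is controlled via Proposition~\ref{p:blueSeed} together with the finite-range dependence of Lemma~\ref{L:blue-seed-indep}.
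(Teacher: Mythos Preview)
Your overall strategy matches the paper's---produce a sea of infected particles from the red blocks around $B$ and then feed a meeting-time estimate---but there is a real gap at the core. The single event $\cJ_{B,r}$ with $r=y^{1/2}\log y$ only says that \emph{globally} at most $\tfrac13$ of the blocks in $\sD_{B,r}$ lie in $f_L[\fD]$; it says nothing about where the good blocks sit inside $D(B,r)$. It is perfectly consistent with $\cJ_{B,r}$ that every block within distance $\sqrt{y}$ of $B$ lies in $f_L[\fD]$, in which case no infected particle is produced within diffusive range of the target particle $a$ during $[\tau_B,\tau_B+y]$, and your ``density-$\rho$ sea'' has density $0$ near $a$. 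The classical meeting bound $\exp(-c\rho y/\log y)$ you invoke requires the infected walkers to start within $O(\sqrt{y})$ of $a$; those at distance $\sim y^{1/2}\log y$ contribute essentially nothing to the expected intersection time, so the global count cannot be plugged directly into that estimate. Note also that the requirements $r\gg\sqrt{y}$ (so that $a$ does not escape $D(B,r)$) and $r\lesssim\sqrt{y}$ (so that every infected particle is in diffusive range of $a$) are incompatible, so no single choice of $r$ can make the one-scale argument work.

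This is precisely why the paper runs a two-scale argument: it applies $\cJ_{B',y^{2/5}}$ for \emph{every} $B'\in\sD_{B,y^{5/9}}$ (union-bounded via \eqref{E:PDxr}), which together with the displacement events $\cO,\cQ$ forces at least $\tfrac12|\sD_{B',y^{2/5}}|$ infected particles into each $D(B',2y^{2/5})$ at time $\tau_B+y^{3/5}$. Tiling a $\sqrt{y}$-neighbourhood of any $x\in D(B,y^{11/20})$ by such balls then gives the uniform pointwise bound $\sum_{a'\in{\bf I}_{\tau_B+y^{3/5}}}P^s_{\bar a',x}\ge cL^{-2}$, and only after this is Lemma~\ref{l:hittingNumberSimple} applied. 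Your argument can be repaired by inserting exactly this step. The ``Main obstacle'' paragraph does not close the gap: it appeals to Proposition~\ref{p:blueSeed} and the $\cA_{B'}$ events, which control the raw blue-seed density, whereas the quantity entering $\cJ$ is the larger set $[\fD]$.
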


\begin{proof}
It is enough to consider $y$ very large. Letting
\[
\cO:=\bigcap_{B'\in \sD_{B,2y}}\bigcap_{a\in H_{B'}}\{\sup_{0 \le t\leq  y^{2/3}} |a(t)-a(0)| \leq y^{7/20}\}
\]
we have that
\[
\P[\cO] \geq \exp(-y^{1/100})
\]
by standard random walk estimates and a union bound over the number of particles in $H_{B'}$ by Lemma~\ref{l:HBsize}.  
Letting
\[
\cQ:=\bigcap_{a\in H_{B}}\{\sup_{0 \le t\leq  y} |a(t)-a(0)| \leq y^{11/20}\}
\]
we similarly have that
\begin{equation}
\label{E:Q}
\P[\cQ] \geq \exp(-y^{1/100}).
\end{equation}
We let
\[
\cZ_{B,y}=\Big\{\forall B'\in \sD_{B,y^{5/9}}, |\{a\in{\bf I}_{\tau_B+y^{3/5}}:\oa(\tau_B+y^{3/5})\in D(B',2 y^{2/5}) \}|\geq \frac12|\sD_{B', y^{2/5}}| \Big\},
\]
and will work to bound $\P[\cZ_{B,y}^c, \cE_\ep]$. Here recall that $\bf I_t$ denotes the set of infected particles at time $t$.

Let $B' \in \sD_{B,y^{5/9}}$. On $\cJ_{B',y^{2/5}}$, a $\frac23$-fraction of the blocks in $\sD_{B,y^{5/9}}$ are outside $f_L[\fD]$.  On the event $\cE_\ep$, all $B'' \in \sD_{B',y^{2/5}} \cap f_L[\fD]^c$ that do not intersect $D(0, R)$ are red  by Corollary~\ref{C:for-next-sect} and generate at least  one infected particle. Hence, for large enough $y$, on $\cJ_{B',y^{2/5}} \cap \cE_\ep$ at least $\frac12$ of the blocks in $\sD_{B', y^{2/5}}$ generate an infected particle. 

On the event $\cO$, those particles travel at distance at most  $y^{7/20}$ by time $\tau_{B'}+y^{2/3}$ and so remain within $D(B',2 y^{2/5})$.  Since the colouring process spreads linearly, we have $|\tau_B-\tau_{B'}|\leq c' y^{5/9}$ for a constant $c'$ depending on the parameters $L_p, \chi, r, R$.
Therefore $\tau_{B'} \leq \tau_B + y^{3/5} \leq \tau_{B'} + y^{2/3}$ for large $y$, and so
\begin{align*}
&\cJ_{B',y^{2/5}} \cap \cO \cap  \cE_\ep \\
&\qquad \subset \Big\{ |\{a\in{\bf I}_{\tau_B+y^{3/5}}:\oa(\tau_B+y^{3/5})\in D(B',2 y^{2/5}) \}|\geq \frac12|\sD_{B', y^{2/5}}| \Big\}.
\end{align*}
Hence bounding $\P [\cJ_{B,r}]$ by \eqref{E:PDxr} in Lemma \ref{L:largest-scale}, we get
\begin{equation}
\label{E:P-bound}
\P[\cZ_{B,y}^c, \cE_\ep] \leq 1 - \P[\bigcap_{B'\in \sD_{B,y^{5/9}}} \cJ_{B,y^{2/5}} \cap \cO ] \leq C_2 y^{-C_1}.
\end{equation}
On the event $\cZ_{B,y}$ we have that for all $x\in D(B,y^{11/20})$ and $s\in[\frac12 y,y-y^{3/5}]$,
\[
\sum_{a\in {\bf I}_{\tau_B+y^{3/5}}} P_{\oa(\tau_B+y^{3/5}),x}^s \geq c L^{-2},
\]
where $P^s_{x, y}$ is the transition probability for a random walk to go from $x$ to $y$ in time $s$. Therefore by Lemma~\ref{l:hittingNumberSimple}, if $a\in H_B$ then
\[
\P[\cZ_{B,y},\cQ,a\not\in {\bf I}_{\tau_B+y}] \leq C_2 y^{-C_1}.
\]
Taking a union bound over $a\in H_B$ using the estimate on $|H_B|$ in Lemma \ref{l:HBsize} and combining this with the estimates in \eqref{E:Q} and \eqref{E:P-bound} completes the proof. 
\end{proof}
\subsection{Proof of Theorem~\ref{thm:main} for $d = 2$}

Let $L_p \in \N, \de > 0$ be as in Corollary \ref{C:for-next-sect}, and let $\gamma=\tfrac{1}{10} \de L_p$. Let $\cY_n$ be the event that a particle that starts in $D(0,2\gamma n)^c$ enters $D(0,\gamma n)$ before time $n+1$.  By standard random walk estimates, $\P[\cY_n] \le e^{-cn}$ for large enough $n$.  By Corollary \ref{C:for-next-sect}, for all large enough $n \in \N$, all boxes in the set $D(0, 2\gamma n)$ will be coloured by time $\frac12 n$.  Define
\[
\cX_n = \bigcup_{B \in \sB_{L_p}:B\cap D(0,2 \gamma n)\neq \emptyset}\Big\{ \cK_B-\tau_B >\frac{n}{2} -1 \Big\}.
\]
Letting $\ep > 0$, by Proposition~\ref{p:particleInfectTime} we have that
\[
\P[\cX_n, \cE_\ep] \leq C n^{-100}
\]
for a constant $C$ that depends on $\ep$ but not on $n$.
On the event $\cX_n^c$ all particles that start in $D(0,2\gamma n)^c$ are infected before time $n-1$.  If $(\cX_n\cup \cY_n)^c$  holds then all particles in $D(0,\gamma t)$ at time $t$ are infected for times  $t\in[n-1,n]$. By the above bounds and the Borel-Cantelli Lemma, $(\cX_n\cup \cY_n)^c$ holds only finitely often a.s. on $\cE_\ep$. Since $\ep > 0$ was arbitrary and $\P[ \cE_\ep] \ge 1 - \ep$ by Corollary \ref{C:for-next-sect}, this completes the proof.

\section{Linear growth in dimension $d = 1$}
\label{S:dim-1}
In dimension $d=1$, we cannot appeal to the framework of Sidoravicius-Stauffer percolation to prove that the infection survives forever. However, unsurprisingly a simpler proof idea works in this case. The key is again to use a Poisson decomposition of the process with appropriate notions of red and blue (fast and slow) blocks. However, since the process cannot travel around slow blocks, if we encounter a slow block we will instead look for a fast block at a larger scale.

In this section, all constants will depend on the diffusion rates $D_S, D_I$ and the density $\mu$.
Fix a small constant $\de > 0$ and a large constant $L \in \N$. How large or small we need to take these values will be made clear in the proof. As it turns out, we will need to choose a fixed $\de > 0$ so that two of the steps in the forthcoming Lemma \ref{L:Ki-bd} go through, and then $L$ will be chosen so that $L \de^2$ is larger than some big $D_S, D_I, \mu$-dependent constant.

 For every $a \in \Z$, let $\tau_a$ be the first time when an infected particle reaches site $a$, and let $\tau_0 = 0$. For $a, k \in \N$, let $\fR(a, k)$ be the event where the following two conditions are satisfied.
\begin{enumerate}[label=(\roman*)]
	\item At every time $t \in [\tau_a, \tau_a + k^2 L^2]$ there are at least $\de k L$ total particles in the interval $(a, a + kL]$. Note that at time $\tau_a$, all of these particles are necessarily susceptible.
	\item $\tau_{a + kL} \le \tau_a + k^2 L^2$.
\end{enumerate}
We think of $\fR(a, k)$ as a `red' event where the process moves sufficiently quickly at scale $k$ through the box $(a, a + kL]$. Letting $S_0 = 0, K_0 = 0$, we now recursively define scales and increments $K_i, S_i$ by
$$
K_i = \inf \{k \in \N, k \ge K_{i-1} - 1 : \fR(S_{i-1}, k) \text{ holds} \}, \quad S_i = S_{i-1} + L K_i = \sum_{j=1}^i L K_j
$$
Also, let $\cF_i$ be the $\sig$-algebra generated by all trajectories of all infected particles up to time $\tau_{S_i}$. Our goal is to show that the sizes of the $K_i$ are well-controlled. The key to doing this is the following observation:
\begin{itemize}
	\item Let $Y(t)$ be the location of the rightmost infected particle in the process. Conditionally on $\cF_i$, the distribution of susceptible particles in the interval $(S_i, \infty)$ at time $\tau_{S_i}$ is a Poisson process of varying intensity $\mu P^b_i, b \in (S_i, \infty)$, where
	$$
	P^b_i = \P( X(t) + b - Y(\tau_{S_i} - t) > 0 \quad \text{ for all } t \in [0, \tau_{S_i}] \; | \; \cF_i),
	$$ 
	where $X$ is an independent continuous time random walk of rate $D_S$ started at $0$.
\end{itemize}
This observation is the analogue of the Poisson description of the SI process used in prior sections. The construction of the $K_i, S_i$ above allows us to get a lower bound on the probabilities $P^b_i$.

\begin{lemma}
	\label{L:Pb-lower-bd}
There is a constant $c> 0$ depending only on $D_S$ such that for $k \ge 2$ we have
$$
P^b_k \ge c \mathbf{1}(b - S_k \ge LK_k/8).
$$
\end{lemma}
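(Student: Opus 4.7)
The plan is to use time-reversibility of the rate-$D_S$ walk and the independence of $X$ from $\cF_k$ to convert $P^b_k$ into a barrier-avoidance probability, then invoke a random-walk persistence estimate that exploits the structural constraint $K_i \ge K_{i-1} - 1$ in order to avoid incurring a factor $c^k$ over the $k$ scales. Conditional on $\cF_k$, the formula for $P^b_k$ already identifies it with the probability that an independent rate-$D_S$ walk $X$ started at $0$ satisfies $X(t) + b > Y(\tau_{S_k} - t)$ for all $t \in [0, \tau_{S_k}]$. Using that $Y(s) < S_j$ for $s < \tau_{S_j}$ together with the red-event bound $\tau_{S_j} - \tau_{S_{j-1}} \le K_j^2 L^2$ (which holds by construction of $K_j$), and setting $\pi_j := \tau_{S_k} - \tau_{S_{k-j}}$ and $\beta_j := K_k/8 + \sum_{i=k-j+2}^k K_i$, the worst case $b = S_k + LK_k/8$ reduces the claim to
\[
\P\Big(X(t) > -L\beta_j \text{ for every } t \in (\pi_{j-1}, \pi_j] \text{ and every } j = 1,\dots,k \;\Big|\; \cF_k\Big) \ge c(D_S),
\]
with $\pi_j - \pi_{j-1} \le K_{k-j+1}^2 L^2$.

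The key structural input is the constraint $K_i \ge K_{i-1} - 1$ (and $K_i \ge 1$) built into the definition of the $K_i$. This yields $\beta_j - \beta_{j-1} = K_{k-j+2} \ge \max(1, K_{k-j+1} - 1) \ge \tfrac12 K_{k-j+1}$, so across each scale transition the barrier becomes looser by at least $\tfrac12 LK_{k-j+1}$, whereas the typical fluctuation of $X$ over the corresponding interval of length $\le K_{k-j+1}^2 L^2$ is $LK_{k-j+1}\sqrt{D_S}$. The per-scale ratio of barrier drop to walk fluctuation is therefore uniformly bounded below by $1/(2\sqrt{D_S})$. Combining this with the elementary inequality $(\sum K_i)^2 \ge \sum K_i^2$ for nonnegative $K_i$ and the forced comparability $K_{k-j+1} \le K_{k-j+2} + 1$, the per-scale bound promotes to the cumulative estimate $L\beta_j \ge c_1(D_S)\sqrt{D_S\pi_j}$ uniformly in $j$ and in the sequence of $K_i$.

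I would conclude by applying a random-walk persistence estimate (for instance via Spitzer's formula, or via a Brownian motion / Bessel$(3)$ comparison) to the sampled process $Z_j := X(\pi_j) + L\beta_j = M_j + D_j$, where $D_j := L\beta_j$ is the cumulative drift and $M_j := X(\pi_j)$ is a mean-zero martingale with $\mathrm{Var}(M_j) = D_S \pi_j$. The cumulative bound above gives $D_j \ge c_1(D_S)\sqrt{\mathrm{Var}(M_j)}$, and a sharper analysis (using that the tail of the $K_i$ sequence forced by $K_i \ge K_{i-1} - 1$ makes the sum $\sum K_i$ dominate $\sqrt{\sum K_i^2}$ by an increasing factor) shows that $\sum_j \P(Z_j \le 0)/j$ is finite with a bound depending only on $D_S$, yielding $\P(Z_j > 0 \; \forall\, j \ge 1) \ge c(D_S) > 0$ uniformly in $k$. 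Continuous-time control of $X(t)$ within each $(\pi_{j-1}, \pi_j]$ is then added by standard within-interval random-walk maximal bounds, absorbing a further constant factor depending only on $D_S$. The main obstacle is precisely this last global step: a naive scale-by-scale Markov induction would lose a factor $c^k$ over the $k$ scales, and circumventing this requires a persistence-type estimate that absorbs martingale fluctuations into the growing cumulative drift rather than treating each scale in isolation.
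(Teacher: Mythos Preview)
Your reduction to a barrier problem and your identification of the structural constraint $K_i \ge K_{i-1}-1$ are both correct and are the same starting points the paper uses. The gap is in the endgame. Your proposed criterion $\sum_j \P(Z_j\le 0)/j < C(D_S)$ fails uniformly: take $K_k=N$ large and $K_{k-1}=\cdots=K_1=1$ (this is admissible since $K_i\ge K_{i-1}-1$). Then for each $j\in\{2,\dots,N\}$ one has $L\beta_j\asymp LN$ while $\pi_j\le (N^2+j-1)L^2\asymp N^2L^2$, so $L\beta_j/\sqrt{D_S\pi_j}\asymp 1/\sqrt{D_S}$ and $\P(Z_j\le 0)\ge p(D_S)>0$ throughout this range; hence $\sum_{j=2}^{N}\P(Z_j\le 0)/j\gtrsim p\log N$, which is unbounded. (Spitzer's identity also does not apply here, since the increments $Z_j-Z_{j-1}$ are independent but not identically distributed.) The parenthetical claim that $\sum K_i/\sqrt{\sum K_i^2}$ is forced to grow with $j$ is therefore false in the regime that matters. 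The actual persistence probability \emph{is} uniformly positive in this example, but not for the reason you give: the many indices $j$ with bounded ratio correspond to only an $O(1)$ factor growth in $\pi_j$, so in continuous time the dangerous stretch is short.

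This is exactly what the paper exploits, and its argument is much shorter. Rather than discretising at the $\pi_j$, it bounds the barrier directly as a function of continuous time: setting $\tilde Y(t)=Y(\tau_{S_k})-Y(\tau_{S_k}-t)$, one shows the deterministic inequality $\tilde Y(t)\ge t^{2/3}/(8L^{1/3})$ for all $t>K_k^2L^2$. This is proved by an elementary combinatorial lemma (Lemma~\ref{L:sum-si}) saying that if $s_j\le s_i+(i-j)$ for $j\le i$ and $\sum_{i\le k}s_i^2\ge t$, then $\sum_{i\le k}s_i\ge \tfrac{t}{2s_k}\wedge\tfrac{t^{2/3}}{2}$; applied to the tail of the $K_i$ sequence this gives the $t^{2/3}$ growth. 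Since the barrier then outpaces the diffusive $t^{1/2}$ scale by a power of $t$, the random walk estimate is immediate (a single dyadic union bound over $t$), uniform in $k$ and in the $K_i$. The missing idea in your approach is to work in continuous time and extract this super-diffusive $t^{2/3}$ barrier growth from the constraint, rather than trying to control a scale-indexed persistence probability where the per-scale ratio need not improve.
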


The key point in the proof of Lemma \ref{L:Pb-lower-bd} is that
\begin{equation}
\label{E:ij-diff}
K_j \le K_i + i - j \qquad \text{ for all } j \le i \in \N.
\end{equation}
To take advantage of this, we prove a simple lemma about sequences.

\begin{lemma}
	\label{L:sum-si}
	Let $s_i$ be a sequence of natural numbers such that $s_j \le s_i + i - j$ for all $j \le i \in \N$. Then
	\begin{equation}
	\label{E:sum-si}
	\sum_{i=1}^k s_i^2 \ge t \qquad \implies \qquad \sum_{i=1}^k s_i \ge \frac{t}{2s_k} \wedge \frac{t^{2/3}}2.
	\end{equation}
\end{lemma}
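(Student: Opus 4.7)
My plan is to exploit the elementary bound $T := \sum_{i=1}^k s_i^2 \le M \cdot S$, where $M := \max_{1 \le i \le k} s_i$ and $S := \sum_{i=1}^k s_i$, and to control $M$ from above in terms of either $s_k$ or $S$ (using the partial-monotonicity hypothesis) in two different regimes. The first regime will produce the $t/(2 s_k)$ alternative and the second will produce the $t^{2/3}/2$ alternative.

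To set this up, fix an index $j_0$ with $s_{j_0} = M$. Applying the hypothesis to the pair $(j_0, j)$ with $j \ge j_0$ yields $s_{j_0} \le s_j + (j - j_0)$, i.e., $s_j \ge M - (j - j_0)$, which is meaningful (and nonnegative) for $j_0 \le j \le \min(k,\, j_0 + M)$. Summing this pointwise lower bound gives
\[
S \ge \ell M / 2, \qquad \text{where } \ell := \min(k - j_0 + 1,\, M).
\]
Applying the hypothesis to the pair $(j_0, k)$ instead gives $M \le s_k + (k - j_0)$. These two inequalities, together with $T \le M S$, are all I will use.

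The case analysis is then straightforward. If $M \le 2 s_k$, then $T \le M S \le 2 s_k S$, which yields $S \ge t/(2 s_k)$ as soon as $T \ge t$. Otherwise $M > 2 s_k$, in which case $M \le s_k + (k - j_0)$ forces $k - j_0 > M/2$. If in addition $k - j_0 \ge M$ then $\ell = M$, so $S \ge M^2/2$ and hence $M \le \sqrt{2 S}$. If instead $k - j_0 < M$, then $\ell = k - j_0 + 1 \le 2 S / M$, and substituting into $M \le s_k + \ell - 1$ gives $M(M - s_k) \le 2 S$; the assumption $M > 2 s_k$ then upgrades this to $M^2 / 2 < 2 S$. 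In both sub-cases $M \le 2 \sqrt{S}$, so $T \le M S \le 2\, S^{3/2}$, which gives $S \ge (t/2)^{2/3} \ge t^{2/3}/2$ (using $2 > 2^{2/3}$) when $T \ge t$.

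The only slightly delicate point is the sub-case $k - j_0 < M$, where the tail giving the lower bound on $S$ is short and one must combine the constraint $M \le s_k + (k - j_0)$ with the bound $S \ge \ell M / 2$ in a mildly self-improving way to still extract $M \lesssim \sqrt{S}$. Everything else is a direct application of the two structural inequalities on $M$ together with $T \le M S$.
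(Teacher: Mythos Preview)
Your proof is correct and follows essentially the same strategy as the paper's: both use $T \le M S$, split into the cases $M \le 2s_k$ and $M > 2s_k$, and in the second case exploit that the tail $s_{j_0},\dots,s_k$ can decrease by at most one per step. The only cosmetic difference is that the paper bounds $M$ directly in terms of $t$ via $\sum_{i>j_0} s_i^2 \gtrsim M^3$ (after a without-loss-of-generality reduction to $\sum_{i>j_0} s_i^2 \le t$), whereas you bound $M$ in terms of $S$ via $S \gtrsim M^2$, which sidesteps that reduction.
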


\begin{proof}
	Let $s_j = M$ be the largest element of $\{s_1, \dots, s_k\}$. We have
	\begin{equation}
	\label{E:si-bound}
	\bigslant{\sum_{i=1}^{k} s_i}{\sum_{i=1}^{k} s_i^2} \ge \frac{1}M,
	\end{equation}
	so if $M \le 2 s_k$, \eqref{E:si-bound} implies the lemma. Now suppose $M > 2 s_k$. Without loss of generality, we may assume that 
	$$
	\sum_{i = j+1}^{k} s_i^2 \le t,
	$$
	since otherwise we could remove $s_1, \dots, s_j$ and decrease the right side sum in \eqref{E:sum-si}. Now, the conditions of the lemma guarantee that the interval $\{s_k, s_k + 1 \dots, M-1\}$ is contained in the set $\{s_{j+1}, \dots, s_k\}$, so
	\begin{equation}
	\label{E:M-bound}
	\frac{1}{8} M^3 \le \frac{1}{2} (M - s_k)((M-1)^2 + s_k^2) \le \sum_{i = j+1}^{k} s_i^2 \le t.
	\end{equation}
	For the first inequality in \eqref{E:M-bound}, we have used that $M > 2 s_k$, and hence also $M \ge 3$. Combining \eqref{E:M-bound} and \eqref{E:si-bound} then yields the result.
\end{proof}

\begin{proof}[Proof of Lemma \ref{L:Pb-lower-bd}]
Let $\tilde Y(t) = Y(\tau_{S_k}) - Y(\tau_{S_k} - t)$. The main step in proving the lemma is to show that for all $t$, 
\begin{equation}
\label{E:tilde-Y}
\tilde Y(t) \ge \frac{t^{2/3}}{6 K_k^{1/3} L^{1/3} } \mathbf{1}(K^2_k L^2 < t \le \tau_{S_k}).
\end{equation}
Given \eqref{E:tilde-Y}, letting $X(t)$  be an independent rate-$D_S$ continuous time random walk started at $0$, for a starting location $b$ with $b - S_k \ge L K_k/8$ we have that
\begin{align*}
   P^b_k &\ge \P\Big(\forall t > 0 : X(t) < \frac{L K_k}{8} + \frac{t^{2/3}}{6 K_k^{1/3} L^{1/3} } \mathbf{1}(K^2_k L^2 < t) \Big) \\
   &= \P\Big(\forall s > 0 : \frac{X(L^2 K_k^2 s)}{L K_k} < \frac{1}{8} + \frac{s^{2/3}}6 \mathbf{1}(1 < s)\Big).
\end{align*}
By standard random walk estimates, this probability is bounded below by a constant $c > 0$; this constant is independent of the choice of $K_k$
since the scaling factor $L K_k$ is bounded away from $0$. 

To complete the proof of the lemma, it just remains to show \eqref{E:tilde-Y}.
Let $K^2_k L^2 < t \le \tau_{S_k}$. Observe that
\begin{equation}
\label{E:YtSk}
\tilde Y(t) \ge S_k - S_{j(t)} = \sum_{i = j(t) + 1}^{k} L K_i, \quad \text{ where } \quad j(t) = \min \{j \in \N: \tau_{S_j} > \tau_{S_k} - t \}.
\end{equation}
Next, observe that
\begin{equation}
\label{E:tauSk}
t \le \tau_{S_k} - \tau_{S_{j(t) - 1}} = \sum_{i= j(t)}^k \tau_{S_i} - \tau_{S_{i-1}} \le \sum_{i= j(t)}^k K^2_i L^2.
\end{equation}
Since $t > K^2_k L^2$, \eqref{E:tauSk} implies that $j(t) < k$. Combined with \eqref{E:ij-diff}, this gives
\begin{equation}
\label{E:single-sum}
\sum_{i = j(t) + 1}^{k} L K_i \ge \frac{1}{3} \sum_{i = j(t)}^{k} L K_i.
\end{equation}
By looking at \eqref{E:YtSk}, \eqref{E:tauSk}, \eqref{E:single-sum}, to lower bound $\tilde Y(t)$, we see that we want to bound the right side of \eqref{E:single-sum} below subject to the constraint that $\sum_{i=j(t)}^k K_i^2 \ge t/L^2$. By \eqref{E:ij-diff}, this puts us in the setting of Lemma \ref{L:sum-si}. Therefore
$$
\tilde Y(t) \ge \frac{t}{6 L K_k} \wedge \frac{t^{2/3}}{6 L^{1/3}} \ge \frac{t^{2/3}}{6 L^{1/3} K_k^{1/3}},
$$
where in the final equality, we use that $t > K_k^2 L^2$. This gives \eqref{E:tilde-Y}.
\end{proof}

We use Lemma \ref{L:Pb-lower-bd} to bound the $K_i$.
\begin{lemma}
\label{L:Ki-bd}
There exists a constant $c>0$  such that for $i \ge 2$ and $n, m \in \N$ with $n \ge m-1$ we have
$$
\P(K_i > n, K_{i-1} = m \mid  \cF_{i-1}) \le c n^{-4} \mathbf{1}(K_{i-1} = m).
$$
\end{lemma}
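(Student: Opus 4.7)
The plan is to establish the stronger estimate
\[
\P(\fR(S_{i-1}, n)^c \mid \cF_{i-1}) \le c n^{-4}
\]
on the event $\{K_{i-1} = m\}$, for every $n \ge m - 1$. Since $\{K_i > n\} \sset \{\fR(S_{i-1}, n)^c\}$ by the definition of $K_i$, the lemma then follows after multiplying by the $\cF_{i-1}$-measurable indicator $\mathbf{1}(K_{i-1} = m)$. The key input is Lemma \ref{L:Pb-lower-bd} at level $k = i-1$: conditionally on $\cF_{i-1}$, the susceptible particles at time $\tau_{S_{i-1}}$ located at positions $b$ with $b - S_{i-1} \ge Lm/8$ form a Poisson process of intensity at least $c\mu$. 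I would then control the three constraints (i), (ii), (iii) in the definition of $\fR(S_{i-1}, n)$ separately and close with a union bound.

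Condition (i) is handled directly by Poisson concentration. Since $n \in \N$ and $n \ge m-1$, the Poisson mean of susceptibles in $(S_{i-1} + Lm/8, S_{i-1} + nL]$ at time $\tau_{S_{i-1}}$ is at least $c\mu L(n-m/8) \ge c'\mu nL$. Picking $\de$ small relative to $c'\mu$ and using that $L$ is large yields a failure bound of $\exp(-c'' n L)$, far smaller than $n^{-4}$. For condition (iii), the $\ge c'\mu n L$ susceptibles from (i) serve as a relay chain: partition $(S_{i-1} + Lm/8, S_{i-1}+nL]$ into consecutive sub-intervals of length $\ell = C\log n$, check by Poisson concentration and a union bound that each sub-interval contains $\ge c\mu \ell$ susceptibles with failure probability $\le n^{-10}$, and then show (by a random-walk hitting-time estimate in the spirit of Lemma \ref{l:hittingNumberSimple}) that the infection advances from the left to the right side of each sub-interval in an additional time $O(\ell^2)$ with failure probability $O(n^{-6})$ per sub-interval. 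Summing relay times over the $O(nL/\log n)$ sub-intervals gives a total of $O(nL\log n) \ll n^2 L^2$, and the union bound delivers a failure probability of order $n^{-4}$.

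The main obstacle is condition (ii), the uniform-in-time lower bound $C(t) \ge \de nL$ on the total particle count in $(S_{i-1}, S_{i-1}+nL]$ throughout the long window $[\tau_{S_{i-1}}, \tau_{S_{i-1}} + n^2L^2]$. My plan is to discretize: pick a polynomial-in-$n$ collection of times $t_j$ in this window and establish (a) at each $t_j$, $C(t_j) \ge 2\de nL$ with exponentially small failure probability in $nL$, by observing that the total unlabeled particle configuration at any later time evolves by independent random walks (albeit with rate-varying kernels) from the Poisson-dominated initial configuration at time $\tau_{S_{i-1}}$ guaranteed by Lemma \ref{L:Pb-lower-bd}; and (b) between consecutive $t_j$, $C$ cannot drop by more than $\de nL$, since such a drop would require an exceptional number of boundary crossings in a short window, itself controlled by a Chernoff-type bound on the boundary-crossing counting process. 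The delicate step is justifying (a) in the presence of infection-induced rate changes, where the per-particle rates depend on the global history; I would handle this by a thinning-and-coupling argument that needs only Poisson domination rather than exact Poisson preservation, and this is ultimately where the polynomial $n^{-4}$ decay (rather than exponential) originates.
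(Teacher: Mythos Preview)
Your decomposition into (i), (ii), (iii) matches the paper, and your treatment of (i) is the same. The substantive differences are in (ii) and especially (iii), and your argument for (iii) has a real gap.

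For condition (ii) the paper's argument is far simpler than yours. One simply counts those susceptibles initially in the middle third $[S_{i-1}+nL/3,S_{i-1}+2nL/3]$ whose trajectory stays in $(S_{i-1},S_{i-1}+nL]$ for the whole window. The rate-change issue you worry about is handled by noting that the displacement of any particle is dominated by the sum of a rate-$D_S$ and a rate-$D_I$ random walk run for the full time $n^2L^2$, an event depending only on the particle's own driving noise; this gives an honest Poisson thinning with mean $\ge 2\delta nL$, hence failure probability $e^{-c\delta nL}$. No time-discretization or coupling is needed, and this is \emph{not} where the polynomial rate comes from.

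For condition (iii) the paper uses a completely different mechanism that you are missing. It exploits the one-dimensional fact that a susceptible particle which starts to the right of the rightmost infected position $Y(\cdot)$ and later lies to its left must have met an infected particle en route and hence is itself infected. Concretely, the paper shows: (I) with failure probability $n^{-c\delta^2 mL}\le n^{-4}$, not all of the $\ge\delta mL$ infected particles inherited from $\fR(S_{i-2},m)$ drift left by $x\sim\sqrt{\delta}\,nL\sqrt{\log n}$ by the half-time; (II) many of the fresh susceptibles have by then diffused into $(S_{i-1}-2x,S_{i-1}-x]$, so by the crossing observation they are now infected; (III) one of these reaches $S_{i-1}+nL$ in the remaining time. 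The $n^{-4}$ bottleneck is event (I), which uses the $\delta mL$ infected particles from the \emph{previous} block --- this is exactly the information encoded in condition (ii) of the preceding scale and is the reason (ii) appears in the definition of $\fR$ at all.

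Your forward relay does not use this input, and as written it has two problems. First, you never explain how the infection reaches the first sub-interval at $S_{i-1}+Lm/8$: the density bound from Lemma~\ref{L:Pb-lower-bd} gives you nothing on $(S_{i-1},S_{i-1}+Lm/8]$, and a single infected particle at $S_{i-1}$ cannot be made to cross a gap of order $mL$ with failure probability $n^{-4}$ when $m$ is comparable to $n$. Second, the susceptibles you place in each length-$C\log n$ sub-interval are located there at time $\tau_{S_{i-1}}$, but by the time the relay arrives (after $\gg\log^2 n$ time) they have diffused out; you give no mechanism to control their positions at the random relay arrival times. Both issues are fixable in principle (the first by bringing in the $\delta mL$ infected particles, the second by a more careful space--time thinning), but the paper's crossing argument sidesteps them entirely and is where the $n^{-4}$ actually lives.
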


In the proof, constants $c, c', c''$ depend only on $D_S, D_I, \mu$.

\begin{proof}
We work conditionally on $\cF_{i-1}$ and on the event where $K_{i-1} = m$. The conditional probability in the lemma can then be bounded above by the conditional probability of $\fR(S_{i-1}, n)^c$. 

First, as long as $\de$ was chosen sufficiently small, since $n \ge m -1$, under this conditioning the number of particles that stay in the spatial interval $[S_{i-1} + n L/3, S_{i-1} + 2nL/3]$ throughout the time interval $[\tau_{S_{i-1}}, \tau_{S_{i-1}} + n^2 L^2]$ stochastically dominates a Poisson random variable of mean at least $2 \de n L$. This uses the density bound in Lemma \ref{L:Pb-lower-bd}.
Therefore condition (i) fails with probability at most  $\exp (- c \de n L) \le e^{-n}$.

Now set $x = \sqrt{\de} nL \log^{1/2} (n + 1)$. Condition (ii) is implied by the following three events:
\begin{enumerate}[label=\Roman*.]
	\item At time $\tau_{S_{i-1}} + n^2L^2/2$, at least one infected particle is in the region $[S_{i-1} - x, \infty)$.
	\item At time $\tau_{S_{i-1}} + n^2L^2/2$, at least $\de n^{1/2} L$ particles that were uninfected at time $\tau_{S_{i-1}}$ lie in the interval $J=(S_{i-1} - 2x, S_{i-1} - x]$.
	\item Of these particles, at least one is in the region $[S_{i-1} + nL, \infty)$ at time $\tau_{S_{i-1}} + n^2 L^2$.
\end{enumerate}
We estimate the probability of events I, II, and III failing. For I, by condition (i) for the event $\fR(S_{i-1}, m)$, there are at least $\de m L$ infected particles in the region $(S_{i-1} - m L, S_{i-1}]$ at time $\tau_{S_{i-1}}$. For the first event to fail, all of these  need to move left by distance at least $x - mL$ in the interval $[\tau_{S_{i-1}}, \tau_{S_{i-1}} + n^2L^2/2]$. For large enough $n$, we have $x - mL \ge x/2 = \sqrt{\delta} n L \log^{1/2}(n+1)/2$. For such $n$ the probability that one individual particle moves left by at least $x - mL$ is $n^{-c' \de}$ for some absolute constant $c' > 0$. Therefore the probability that all particles move left by this much is at most $n^{-c' m \de^2 L}$. As long as $\de^2 L$ is sufficiently large, this probability is at most $O(n^{-4})$.

Next, we show the probability that event II fails also has an upper bound of order lower than $O(n^{-4})$. Since the event (i) fails with probability at most $e^{-n}$, we may assume this event holds.
Given the event in (i), there are at least $\de n L$ susceptible particles in the region $(S_{i-1}, S_{i-1} + nL]$ at time $\tau_{S_i}$. Each of these has probability at least
$$
c''\exp (- c'x^2 / (n^2 L^2)) \ge c'' n^{-c' \de}
$$
of being in the interval $J$ at time $\tau_{S_{i-1}} + n^2 L^2/2$. Therefore the number of particles that were uninfected at time $\tau_{S_{i-1}}$ that are in $J$ at this time stochastically dominates a binomial random variable with $\de n L$ trials and success probability $n^{-c' \de}$. As long as $\de$ was chosen small enough so that $c' \de \le 1/4$ and $\de L$ is sufficiently large, a standard estimate on the lower tail of a binomial random variable then shows that event II fails with probability less than $O(n^{-4})$. A similar argument shows that the event III also fails with this probability. Putting all this together shows that condition (ii) fails with total probability $O(n^{-4})$.
\end{proof}

\begin{lemma}
	\label{L:step-1}
	Almost surely, $K_1 < \infty$.
\end{lemma}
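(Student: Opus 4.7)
The plan is to reduce Lemma~\ref{L:step-1} to showing $\P[\fR(0,k)] \to 1$ as $k \to \infty$. This reduction is immediate from the observation
\[
\{K_1 < \infty\} = \bigcup_{k \ge 1} \fR(0, k),
\]
so $\P[K_1 < \infty] \ge \sup_k \P[\fR(0,k)]$. The argument for $\P[\fR(0,k)] \to 1$ parallels the proof of Lemma~\ref{L:Ki-bd}, with the simplifications that the density lower bounds now come directly from the initial Poisson$(\mu)$ distribution (rather than from Lemma~\ref{L:Pb-lower-bd}, which required $i \ge 2$) and that we only need failure probabilities tending to zero rather than a polynomial tail.

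For condition~(i), Poisson concentration gives failure probability $\le e^{-ckL}$ whenever $\delta < \mu/2$. For condition~(ii), couple each particle to an independent driver random walk $W$ running at rate $1$. The particle's accumulated walk-time $\sigma(t)$ satisfies $\sigma(t) \in [(D_S \wedge D_I)t, (D_S \vee D_I)t]$ for all $t \ge 0$, regardless of infection history, so any particle initially in $(kL/4, 3kL/4]$ whose driver walk satisfies $W(s) \in (-kL/4, kL/4]$ for all $s \in [0, (D_S \vee D_I) k^2 L^2]$ remains in $(0, kL]$ at every $t \in [0, k^2 L^2]$. These driver events are independent across particles, each with probability bounded below by a constant depending only on $D_S, D_I$, so Poisson thinning and Chernoff give at least $c \mu k L$ such particles with probability $1 - e^{-c'kL}$, yielding condition~(ii) for $\delta$ small enough.

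For condition~(iii), we mirror the three-event decomposition of Lemma~\ref{L:Ki-bd}, formally with $m = 0$, setting $x = \sqrt{\delta}\, kL \log^{1/2}(k+1)$. Event~I asks that some infected particle lies in $[-x, \infty)$ at time $k^2 L^2/2$; the only candidate at $i = 1$ is the initial particle at $0$, whose position is centered Gaussian with variance $D_I k^2 L^2/2$, giving failure probability at most $(k+1)^{-\delta/D_I} \to 0$. Events~II and~III, concerning the $\ge \delta k^{1/2} L$ originally-susceptible particles in $J = (-2x, -x]$ at time $k^2 L^2/2$ and at least one of them reaching $[kL, \infty)$ by time $k^2 L^2$, then follow essentially verbatim from the original proof, using the initial Poisson density $\mu$ (plus the driver-walk coupling to control displacements independently of the SI dynamics) in place of Lemma~\ref{L:Pb-lower-bd}. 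Combining the three, $\P[\fR(0,k)] \to 1$.

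The main obstacle is Event~I: at $i = 1$ there is only a single infected particle, whereas Lemma~\ref{L:Ki-bd} produced the tail $n^{-c' m \delta^2 L}$ by forcing all $\delta m L$ infected particles to move left simultaneously, which degenerates at $m = 0$. Since the present lemma needs only $\P \to 0$ rather than a polynomial rate, a direct Gaussian tail on the single initial walk suffices; this is precisely why Lemma~\ref{L:Ki-bd} is restricted to $i \ge 2$ while Lemma~\ref{L:step-1} is treated separately.
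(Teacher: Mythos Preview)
Your proposal is correct and follows essentially the same approach as the paper. The paper itself omits the proof, noting only that it is identical to that of Lemma~\ref{L:Ki-bd} except that for Event~I one has just a single infected particle, reducing the tail from $O(n^{-4})$ to $O(n^{-\epsilon})$; you identify exactly this difference and handle it with the Gaussian tail on the initial particle, which indeed gives a polynomial rate $(k+1)^{-\delta/D_I}$ matching the paper's claimed $O(n^{-\epsilon})$.
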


The proof of Lemma \ref{L:step-1} is essentially the same as the proof of Lemma \ref{L:Ki-bd}, so we omit it. The only difference is that for bounding the probability of event I in the proof above, we can only assume the existence of \textbf{one} infected particle (rather than $\de m L$). This reduces the upper bound on $\P(K_1 > n)$ from $O(n^{-4})$ to $O(n^{-\ep})$ for some $D_S, D_I, \mu$-dependent positive number $\ep$.

\begin{prop}
	\label{P:linear-growth}
	There exists $c> 0$ depending on $D_S, D_I, \mu$ such that almost surely,
	$$
	\limsup_{|a| \to \infty} \frac{\tau_a}{a} \le c < \infty. 
	$$ 
\end{prop}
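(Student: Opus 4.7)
The plan is to reduce to showing $\sum_{j=1}^N K_j^2 = O(N)$ almost surely. Indeed, condition (iii) in the event $\fR(S_{j-1}, K_j)$ yields $\tau_{S_j} - \tau_{S_{j-1}} \le K_j^2 L^2$, so $\tau_{S_N} \le L^2 \sum_{j=1}^N K_j^2$. Since $K_j \ge 1$ for $j \ge 1$, we also have $S_N = L \sum_{j=1}^N K_j \ge LN$. Thus, if $\sum_{j \le N} K_j^2 \le C' N$ eventually a.s., we get $\tau_{S_N}/S_N \le L C'$. For arbitrary $a > 0$, let $i(a) = \min\{i : S_i \ge a\}$; then $i(a) \le a/L + 1$, and $\tau_a \le \tau_{S_{i(a)}} \le L^2 C' i(a) \le L C' a + L^2 C'$, giving $\limsup_{a \to \infty} \tau_a / a \le LC'$ a.s. The case $a \to -\infty$ is handled by the symmetric argument applied to sites to the left of the origin.

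To bound $\sum K_j^2$, I will stochastically dominate $(K_j)$ by a simpler auxiliary chain. By Lemma~\ref{L:Ki-bd} combined with Strassen's theorem on stochastic dominance (on an enlarged probability space with independent auxiliary uniforms), one can construct i.i.d.\ random variables $\{Z_j\}_{j \ge 1}$ with $\P(Z_j > n) = \min(1, cn^{-4})$, independent of $K_0$, such that
\begin{equation*}
K_j \le \tilde K_j := \max(\tilde K_{j-1} - 1, Z_j) \qquad \text{pathwise for all } j \ge 1,
\end{equation*}
where $\tilde K_0 := K_0$. The justification is that given $\cF_{j-1}$ with $K_{j-1} = m$, the conditional law of $K_j$ is supported in $\{m-1, m, m+1, \ldots\}$ with upper tail bounded by $cn^{-4}$ for $n \ge m - 1$, and hence is stochastically dominated by the law of $\max(m-1, Z_j)$. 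Iterating the recursion gives the closed form $\tilde K_j = \max_{0 \le i \le j}(Z_i - (j-i))$, with the convention $Z_0 := \tilde K_0$.

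Next, using $(\max_i a_i)^2 \le \sum_i a_i^2$ for non-negative $a_i$ and swapping summation order,
\begin{align*}
\sum_{j=1}^N \tilde K_j^2 \le \sum_{j=1}^N \sum_{i=0}^j (Z_i - (j-i))_+^2 &\le \sum_{i=0}^N \sum_{k=0}^{\infty} (Z_i - k)_+^2 \\
&= \sum_{i=0}^N \frac{Z_i(Z_i+1)(2Z_i+1)}{6} \le C \sum_{i=0}^N Z_i^3.
\end{align*}
Since $\P(Z > n) \le cn^{-4}$ implies $\E Z^3 < \infty$, the strong law of large numbers yields $\sum_{i=0}^N Z_i^3 \le C'' N$ eventually a.s.\ for some random $C''$, and hence $\sum_{j=1}^N K_j^2 \le C' N$ eventually a.s., which combined with the reduction in the first paragraph completes the proof.

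The main technical obstacle is the pathwise coupling in the second paragraph: one must set up the inverse-CDF coupling carefully on an enlarged space so that the $Z_j$ are i.i.d.\ and independent of the natural filtration of the SI process. Once that coupling is in hand, the remaining steps — the closed-form expression, the telescoping bound $\sum \tilde K_j^2 \le C \sum Z_j^3$, and the SLLN — are all elementary.
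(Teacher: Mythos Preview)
Your proof is correct and follows essentially the same route as the paper: dominate $(K_j)$ by the Lindley-type recursion $\tilde K_j = \max(\tilde K_{j-1}-1, Z_j)$ with i.i.d.\ $Z_j$ having $n^{-4}$ tails, expand to the closed form $\tilde K_j = \max_{i \le j}(Z_i - (j-i))$, bound $\sum_j \tilde K_j^2 \le C\sum_i Z_i^3$, and apply the SLLN. The one indexing slip is that Lemma~\ref{L:Ki-bd} only applies for $i \ge 2$, so you cannot couple $K_1$ with $Z_1$; you should set $\tilde K_1 := K_1$ (finite a.s.\ by Lemma~\ref{L:step-1}) and start the recursion there, which costs only a single a.s.\ finite term in the sum and does not affect the argument --- the paper handles this the same way, taking $M_1$ separately and declaring only $M_i$, $i \ge 2$, i.i.d.
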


\begin{proof}
	First, by symmetry it suffices to prove the bound as $a \to \infty$. Next, for $k \in \N, k \ge 2$, we can write
	\begin{equation}
	\label{E:Mi-bound}
	\frac{\tau_{S_k}}{S_{k-1}} \le \frac{\sum_{i=1}^k K_i^2 L^2}{ \sum_{i=1}^{k-1} K_i L} \le \frac{L}{k-1} \sum_{i=1}^k K_i^2.
	\end{equation}
	Here the first inequality uses condition (ii) in the definition of $\fR(a, k)$, and the second equality uses the trivial bound that $\sum_{i=1}^{k-1} K_i L \ge (k-1) L$. 
	By Lemma \ref{L:Ki-bd}, 
	the sequence $(K_i, i \in \N)$ is stochastically dominated by a recursively defined sequence $L_i, i \in \N$, where $L_1 = M_1$ and $L_{i+1} = (L_i - 1) \vee M_{i+1}$ for a sequence $M_i, i \in \N$, where the $M_i, i \ge 2$ are i.i.d.\ with finite third moment. Now, in a coupling where $K_i \le L_i$ for all $i$, we have
	$$
	\sum_{i=1}^k K_i^2 \le  \sum_{i=1}^k L_i^2 = \sum_{i=1}^k \max_{1 \le j \le i} [(M_j - (i-j))^+]^2 \le \sum_{i=1}^k M_i^3.
	$$
	This bound and the law of large numbers shows that the limsup of the right side of \eqref{E:Mi-bound} is almost surely bounded above by some constant $c$, so 
		\begin{equation}
		\label{E:ktoinfty}
	\limsup_{k \to \infty} \frac{\tau_{S_k}}{S_{k-1}} \le c < \infty. 
	\end{equation}
	Finally, for every $a \in \N$ we have $\tau_a/a \le \tau_{S_{k(a)}}/S_{k(a)-1}$, where $k(a)$ is such that $S_{k(a) - 1} \le a \le S_{k(a)}$. Finally, $k(a) \to \infty$ with $a$, so \eqref{E:ktoinfty} implies the result.
%
%
%
%
\end{proof}

Theorem \ref{thm:main} in dimension $1$ is a rephrasing of Proposition \ref{P:linear-growth}.

\section{Proof of coupling}
\label{s:coupling}

In this section, we prove that the construction of the SI process given in Section~\ref{S:SI-colouring} is valid. All notation is as in that section, and for the proof in this section, $L$ is a fixed side length and all blocks belong to $\sB = \sB_L$. The parameters $r, R, \chi$ are also fixed.

The natural construction of the SI process is built from the Poisson process $\sP$ on $\fW$ along with a single ignition trajectory $W_{B_0,\operatorname{ig}}$ for the initially infected particle, where $B_0$ is the block containing the origin. This is the construction of the SI process given in \cite{kesten2005spread}. In the arXiv version \cite{kesten2003spread} of that article they show that the construction gives a strong Markov process. By the strong Markov property, the following process is equal to the original SI process in distribution: run the SI process according to $\sP, W_{B_0,\operatorname{ig}}$ but at every time a particle becomes the ignition particle for a block $B$ at a site $x$, replace its trajectory for $s \in [\tau_B, \infty)$ with $x + W_{B,\operatorname{ig}}(\beta (s- \tau_B))$. It is this latter process that we will couple with the more complex block construction.

For $t\in\R$ we define the time shift function $\psi_t:\fW\to\fW$ by $(\psi_t (w))(s) = w(s-t)$.  For $A\subset \fW$ we let $\psi_t(A):=\{\psi_t(w):w\in A\}$.
We view the Poisson processes $\sP_B, B \in \sB$ taken together as a Poisson process $\sP^*$ on $\sB\times\fW$. For $A\subset \fW$ we let $\sP\mid_A$ denote the restriction of $\sP$ to $A$.  We will show that there is a natural coupling of $\sP$ and $\sP^*$ such that they generate the same SI process $X_t^*$ when using the same set of ignition trajectories.

Let $C_t \subset \Z^2$ be the set of vertices in coloured blocks at time $t$ in the SI process generated by $\sP$ and all ignition trajectories.  Let $\cG_0$ denote the $\sigma$-algebra generated by the ignition particle trajectories of all the blocks. Define the set of paths that have entered the coloured region by time $t$ as
\[
A_t = \{w\in\fW: \exists t'\in [0,t), w(t')\in C_{t'}\}.
\]
Let $\cG_t$ be the filtration generated by $\sP\mid_{A_t}$ and $\cG_0$, which is independent of $\sP$.  
By construction, the map $t \mapsto A_t$ is left continuous; we let $A_t^+= \bigcap_{t'>t} A_{t'}$ be its right continuous version.
We partition $A_t$ into sets 
\[
A_{B,t} = \{w\in A_t: t' = \min\{ s\in [0,t), w(s)\in C_s\},w(t')\in B\}.
\]
This is the set of paths which first enter the coloured region in block $B$.  Our construction ensures that $C_t$ is adapted to $\cG_t$ since  $C_t$ grows either by infected particles from previously coloured blocks entering new blocks or by its spread to neighbouring blocks after time $\xi$ is elapsed. We define the random map
\[
\Psi:\fW\to \sB\times \fW
\]
such that for $w\in A_{B}$,
\[
\Psi(w)=(B,\psi_{\tau_B}(w)).
\]
We will couple $\sP$ and $\sP^*$ such that  $w\in \sP$ if and only if $\Psi(w)\in \sP^*$.  The remaining point process  $\sP^*\setminus\Psi(\fW)$ is not used in the construction and this can be set independently of $\sP$ in the coupling.  To see that this gives the correct coupling note that  a particle with trajectory $w(t)$ is first coloured in box $B$ corresponds to a particle with trajectory $w(t-\tau_B)$ in $\sP_B$.

Now let us see that, given $\cG_0$, two different Poisson processes $\sP \neq \sP'$ cannot give rise to the same $\sP^*$ in the coupling.  Let $C_t'$ be the coloured region given by the process generated by $\sP'$ and define $A_t'$ analogously.  Define the stopping times
\[
T_p =\inf_t \{t: \sP\mid_{A_t} \neq \sP'\mid_{A'_t}\},\qquad T_c= \inf_t \{t: C_t \neq C_t'\}
\]
For $t<T_p  \wedge T_c$ we have that the processes of coloured particles must be equal and so $X_t^* = X_t^{*\prime}$.  Suppose that $T_c=T_p  \wedge T_c$.  The processes $C_t,C_t'$ are right continuous so we must have $C_{T_c}\neq C_{T_c}'$.  Suppose that $B\sset C_{T_c}$ but $B\not\sset C_{T_c}'$.  This can only happen if a particle coloured at some time $T_a<T_c$ in $X_t^*$ enters $B$ at time $T_c$. Now, that particle must also be present in $X_{T_a}^{*\prime}$ since $\sP\mid_{A_{T_a}} = \sP'\mid_{A'_{T_a}}$ and so must make the same jump into $B$ at the same time.  This is a contradiction, so $T_p<T_c$. As a consequence of this, $A_t=A_t'$ for $t\leq T_p$ and $A^+_{T_p} = A^{+ \prime}_{T_p}$.

Since the set $\{t:w\in A_t\}$ is open on the left for any path $w$, we have that  $\sP\mid_{A_{T_p}} = \sP'\mid_{A_{T_p}}=\sP'\mid_{A'_{T_p}}$.  So we must have $\sP\mid_{A^+_{T_p}} \neq  \sP'\mid_{A^+_{T_p}}$.  Suppose $w$ is some trajectory in $\sP\mid_{A^+_{T_p}\setminus A_{T_p}}$ but not in $\sP'\mid_{A^+_{T_p}\setminus A_{T_p}}$.  It is coloured in some block $B\subset C_{T_p}$ and since $B$ was coloured before time $T_c$ we have that $\tau_B=\tau'_B$.  But then $(B,\psi_{\tau_B}(w))\in \sP^*$ so if $\sP'$ can also be coupled with $\sP^*$ then $w$ must be in $\sP'$ as well, which is a contradiction. 

Therefore the map from $\sP \mapsto \sP^*$ is one-to-one, and it follows that from the coupling we can reconstruct $\sP$ from $\sP^*$. To complete the proof of the desired coupling, we just need to check that $\sP^*$ is indeed a Poisson process when defined via $\Psi(\sP)$.

\begin{proposition}\label{p:PoissonCoupling}
Under the coupling given by $\Psi$ we have that $\sP^*$ is a Poisson process on $\sB\times \fW$.
\end{proposition}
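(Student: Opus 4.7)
The plan is to exploit the strong Markov property of the Poisson process $\sP$ for stopping sets, combined with the time-translation invariance of the intensity measure $\sW$. The latter is the crucial algebraic ingredient: for every $t \in \R$, the pushforward satisfies $(\psi_t)_* \sW = \sW$. Indeed, $(\psi_t)_* \sW_u = \sum_v p_t(u,v) \sW_v$ where $p_t$ is the transition kernel of the simple random walk; summing over $u$ and using that $p_t$ is doubly stochastic gives $(\psi_t)_* \sW = \sum_v \sW_v = \sW$.

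First I would set up the filtration $\cG_t := \sigma(\cG_0, \sP|_{A_t})$. The construction of $X_t^*$ from $\sP$ and the ignition trajectories shows that $C_t$ and hence $A_t$ are $\cG_t$-adapted, so $A_t$ is a stopping set for $\sP$ in the Kingman/Kallenberg sense. By the strong Markov property for Poisson processes at a stopping set, conditional on $\cG_t$, the restriction $\sP|_{A_t^c}$ is an independent Poisson process with intensity $\mu \sW|_{A_t^c}$.

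Next I would argue inductively on the colouring times. Enumerate the coloured blocks $B_0, B_1, \dots$ in (a.s.\ distinct) order of their colouring times, grouping blocks that are multi-coloured via rule (c) into a single step. Maintain the inductive hypothesis that, conditional on $\cG_{\tau_{B_n}^-}$: the restriction of $\sP^*$ to $\bigcup_{i<n} \{B_i\} \times \fW$ is a Poisson process with intensity $\mu \sW$ on each $\{B_i\} \times \fW$, and the unrevealed part of $\sP$ is independent and Poisson with intensity $\mu \sW|_{A_{\tau_{B_n}^-}^c}$. For the inductive step, observe that the trajectories eventually first coloured in $B_n$ are revealed during the window $[\tau_{B_n}, \tau_{B_n} + \chi]$, after which no further first-colourings in $B_n$ are possible by Definition~\ref{D:SI-colouring}(a). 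The set of these trajectories is a measurable function of $\sP|_{A_{\tau_{B_n}^-}^c}$ and $\cG_{\tau_{B_n}^-}$ (via the continuing stopping-set argument on the smaller time interval). Applying the strong Markov property again, they form an independent Poisson piece, and pushing forward by $\psi_{\tau_{B_n}}$ and invoking the $\psi_t$-invariance of $\sW$ yields intensity $\mu \sW$ on $\{B_n\} \times \fW$. The independent leftover Poisson process after this reveal step verifies the inductive hypothesis at level $n+1$.

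The main obstacle will be careful bookkeeping at each step. The subtleties I expect to handle carefully are: (i) the multi-colouring case (c), where a single ignition particle colours up to three blocks simultaneously; since the relevant ignition walks $W_{B,\operatorname{ig}}$ are $\cG_0$-measurable and independent of $\sP$, this case does not consume $\sP$-intensity and so does not interfere with the Poisson structure of $\sP^*$; (ii) verifying that the enumeration $\tau_{B_0} < \tau_{B_1} < \dots$ is well-defined with $\tau_{B_n} \to \infty$, which follows from the finite-speed growth of $C_t$ and the fact that only finitely many blocks can be coloured in bounded time; and (iii) checking that the $B$-component of $\sP^* \setminus \Psi(\fW)$ (the portion of $\sP^*$ not generated from $\sP$) can be chosen as an independent Poisson complement, giving the full intensity $\mu \sW$ on each block. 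Passing to $n \to \infty$ via monotone convergence of the Laplace functionals of $\sP^*$ then completes the proof.
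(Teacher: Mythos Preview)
Your identification of the key ingredients---the stopping-set Markov property for $\sP$ and the time-shift invariance $(\psi_t)_*\sW=\sW$---is correct, and these are exactly what the paper uses. However, the inductive scheme you propose has a genuine gap.

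The problem is that the set $A_{B_n}$ of trajectories first coloured in $B_n$ is \emph{not} determined at time $\tau_{B_n}$; it is only determined once all neighbours of $B_n$ are coloured, i.e.\ by time $\tau_{B_n}+\chi$. During the window $[\tau_{B_n},\tau_{B_n}+\chi]$ many other blocks $B_m$ get coloured, and trajectories first entering via those blocks are being revealed interleaved with trajectories first entering via $B_n$. Consequently your inductive hypothesis---that $\sP^*$ restricted to $\bigcup_{i<n}\{B_i\}\times\fW$ is Poisson conditional on $\cG_{\tau_{B_n}^-}$---is ill-posed: for $i<n$ with $\tau_{B_i}+\chi>\tau_{B_n}$, the process $\sP^*|_{\{B_i\}\times\fW}$ is not yet $\cG_{\tau_{B_n}^-}$-measurable, and it genuinely depends on the not-yet-revealed piece $\sP|_{A_{\tau_{B_n}^-}^c}$. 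For the same reason, $A_{B_n}$ is not a stopping set at time $\tau_{B_n}$, so the phrase ``applying the strong Markov property again, they form an independent Poisson piece'' does not go through as stated.

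The paper repairs this by inducting not on the block colouring times $\tau_{B_n}$ but on the finer increasing sequence $T_1<T_2<\cdots$ of times at which \emph{either} $C_t$ grows \emph{or} a new particle hits $C_t$. At step $i$ it introduces a ``frozen'' colouring $C_t^{(i)}$ (and corresponding $\Psi^{(i)}$) obtained by assuming no new particles arrive after $T_i$; this object is $\cG^+_{T_i}$-measurable, the map $\Psi^{(i)}$ is then a deterministic measure-preserving bijection on $(A^+_{T_i})^c$, and crucially $\Psi^{(i)}=\Psi$ on $A^+_{T_{i+1}}\setminus A^+_{T_i}$. This lets one swap in $\Psi(\sP)$ for an independent reference Poisson process $\sP_0^*$ one increment at a time while preserving the distribution. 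Your outline would be salvageable if you switched to this finer induction and introduced the frozen map; as written, the block-by-block induction cannot be made to work because the reveals for different blocks overlap in time.
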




\begin{proof}
We let $\sP^*_0$ be a Poisson process on $\sB\times \fW$ which is independent of $\sP$. We will give an inductive construction to show that $\sP^*$ is equal in distribution to $\sP^*_0$.
Let $\cG^+_t$ be the filtration generated by $\sP\mid_{A_t^+}$ and $\cG_0$.
We define $0=T_1< T_2 <\ldots$ to be the times that $C_t$ grows or new particles hit $C_t$.  Formally we say that for $i\geq 1,$ we have
\[
T_{i+1} = \inf\{ t>T_i: C_t\neq C_{T_i}\} \wedge \inf\{ t>T_i: \sP(A_t) > \sP(A_{T_i}^+)\}.
\]
We will define a sequence of point processes $\sP^*_i$ for $i\geq 0$ such that
\begin{equation}\label{eq:sPi.definition}
\sP^*_i\mid_{\Psi(A^+_{T_i})} = \Psi(\sP)\mid_{\Psi(A^+_{T_i})}, \qquad \sP^*_i\mid_{\Psi(A^+_{T_i})^c} =  \sP^*_0\mid_{\Psi(A^+_{T_i})^c}
\end{equation}
where $\Psi(\sP)\mid_{\Psi(A^+_{T_i})}(\Lambda):=\sP(\Psi^{-1}(\Lambda))$ for $\Lambda\subset \Psi(A^+_{T_i})$.  That is, we couple only the set $A^+_{T_i}$ and the remainder of $\sB\times \fW$ is given by $\sP^*_0$. By setting $A_{T_0}^+ := \emptyset$ we can make $\sP^*_0$ is trivially consistent with equation~\eqref{eq:sPi.definition}.  We will prove inductively that  
\begin{equation}\label{eq:sPi.equal.dist}
\sP^*_i \stackrel{d}{=} \sP^*_0.
\end{equation}
Assume \eqref{eq:sPi.equal.dist} holds for some fixed $i$. By the inductive hypothesis and the fact that $\Psi(\sP)\mid_{\Psi(A^+_{T_i})}$ is $\cG_{T_i}^+$-measurable, to show that \eqref{eq:sPi.equal.dist} holds for $i+1$, we just need to show that conditional on $\cG_{T_i}^+$ we have
\begin{equation}
\label{eq:sPi.dist.2}
 \Psi(\sP)\mid_{\Psi(A^+_{T_{i+1}} \smin A^+_{T_{i}})} + \sP^*_0\mid_{\Psi(A^+_{T_{i+1}})^c} \stackrel{d}{=} \sP^*_0\mid_{\Psi(A^+_{T_{i}})^c}.
\end{equation}
Define $C_t^{(i)}=C_t$ for $t\leq T_i$ and for $t>T_i$ we set $C_t^{(i)}$ to be the trajectory taken by $C_t$ if no new particles are found after time $T_i$.  Then $C_t^{(i)}$ is $\cG^+_{T_i}$-measurable for $t\geq T_i$.  Analogously set $A^{(i)}_t = \{w: \exists t'\in [0,t), w(t')\in C_{t'}^{(i)}\}$ which is also $\cG^+_{T_i}$-measurable. We can also define $\tau_B^{(i)}$ and $\Psi^{(i)}$ analogously.  Finally let 
\[
S_{i+1}=\inf\{t > T_i: C_t^{(i)} \neq C_{T_i}\}, \quad U_{i+1} = \inf\{ t>T_i: \sP(A_t^{(i)}) > \sP(A_{T_i}^+)\}.
\]
Then we have that $T_{i+1} = S_{i+1} \wedge U_{i+1}$ since either we find a new particle or $C_t=C_t^{(i)}$ grows. With these definitions, we can see that $C_t^{(i)} = C_t$ for $t \le T_{i+1}$ and hence that $\Psi^{(i)} = \Psi$ on the set $A_{T_{i+1}}^+$. In particular, the left side of \eqref{eq:sPi.dist.2} equals
\begin{equation}
\label{E:Psi-I}
 \Psi^{(i)}(\sP)\mid_{\Psi^{(i)}(A^+_{T_{i+1}} \smin A^+_{T_{i}})} + \sP^*_0\mid_{\Psi^{(i)}(A^+_{T_{i+1}})^c}.
\end{equation}
 Now, $T_{i+1}$ is a stopping time and so $A_{T_{i+1}}^+$ is a stopping set for the filtration $\cG^+_t$. 
 In particular, conditional on $\cG^+_{T_{i+1}}$ we have that $\sP\mid_{(A^+_{T_{i+1}})^c}$ is a Poisson process. Now, conditional on $\cG^+_{T_i}$, $\Psi^{(i)}$ is a Poisson measure preserving map on all of $(A^+_{T_i})^c$, so conditional on $\cG^+_{T_{i+1}}$ we have that
 $$
 \Psi^{(i)}(\sP)\mid_{ \Psi^{(i)}((A^+_{T_{i+1}})^c)}
 $$
 is also a Poisson process on $\Psi^{(i)}((A^+_{T_{i+1}})^c)$. Since $\sP_0$ is independent of all else we then have that conditional on $\cG^+_{T_{i+1}}$,
 $$
 \sP^*_0\mid_{\Psi^{(i)}(\fW)^c} +  \Psi^{(i)}(\sP)\mid_{ \Psi^{(i)}((A^+_{T_{i+1}})^c)} \stackrel{d}{=}  \sP^*_0\mid_{\Psi^{(i)}(A^+_{T_{i+1}})^c}.
 $$
 Since $\Psi^{(i)}(\sP)\mid_{\Psi^{(i)}(A^+_{T_{i+1}} \smin A^+_{T_{i}})}$ is $\cG^+_{T_{i+1}}$-measurable, this implies that conditional on $\cG^+_{T_{i + 1}}$, \eqref{E:Psi-I} is equal in distribution to
 \begin{equation}
 \label{E:Psi-II}
  \sP^*_0\mid_{\Psi^{(i)}(\fW)^c}+\Psi^{(i)}(\sP)\mid_{\Psi^{(i)}((A^+_{T_{i}})^c)}.
 \end{equation}
 Now, since $\cG^+_{T_{i}} \sset \cG^+_{T_{i + 1}}$, the same equality in distribution holds conditional on $\cG^+_{T_{i}}$. Finally, conditional on $\cG_{T_{i}}^+$ we have that $\sP|_{(A^+_{T_i})^c}$ is a Poisson process and $\Psi^{(i)}|_{(A^+_{T_i})^c}$ is a Poisson measure preserving map. Therefore conditional on $\cG_{T_i}^+$, we have that
 $$
\Psi^{(i)}(\sP)\mid_{\Psi^{(i)}((A^+_{T_{i}})^c)} \stackrel{d}{=} \sP^*_0\mid_{\Psi^{(i)}((A^+_{T_{i}})^c)}.
$$
This equality in distribution also holds if we additionally condition on $\sP^*_0\mid_{\Psi^{(i)}(\fW)^c}$, since $\sP_0^*$ is a Poisson process, independent of all else, $\Psi^{(i)}(\fW)^c$ is again $\cG_{T_i}^+$-measurable, and $\Psi^{(i)}((A^+_{T_{i}})^c)$ and $\Psi^{(i)}(\fW)^c$ are disjoint. This implies that conditional on $\cG_{T_{i}}^+$, \eqref{E:Psi-II} is equal in distribution to $\sP^*_0\mid_{\Psi^{(i)}(A^+_{T_{i}})^c}$, giving \eqref{eq:sPi.dist.2}.
\end{proof}

\section{Proof of Proposition \ref{P:SI-to-BR}}
\label{S:SI-to-BR}

We use notation for SSPs from Section \ref{S:SSP} and notation for the colouring process in the SI model in Section \ref{S:SI-colouring}.
To check that we have defined an SSP, we need to check that all blue clocks take values in $[0, \ka(u, v)]$. This follows from condition (a), which guarantees that for any $u \sim v$ in $\Z^2$, that
$$
\tau_{f(v)} - \tau_{f(u)} \le \frac{\xi \ka(u, v)}{\ka_0}.
$$
The required acyclic condition on $0$-weighted edges follows since $X_\fR(u, v) \wedge X_\fB(u, v)$ can only equal $0$ if $u <_{\operatorname{SI}} v$. We need to check that this process has finite speed. 

Suppose that this is not the case. Then we can find an infinite chain $0 = u_0, u_1, \dots$ which is coloured in finite time, and each of the vertices $u_i$ in this chain is coloured by the edge from $u_{i-1}$. In particular, writing $T(u)$ for the time that a square $u$ is coloured in $(\fR, \fB)$, we have
$$
T(u_i) \ge X_\fR(u_{i-1}, u_i) \wedge X_\fB(u_{i-1}, u_i) + T(u_{i-1}) = X_\fR(u_{i-1}, u_i) + T(u_{i-1}),
$$
where the final inequality is by definition.
This implies that
$$
\sum_{i=1}^\infty X_\fR(u_{i-1}, u_i) < \infty.
$$
Since $\tau_{f(v)} - \tau_{f(u)} \le X_\fR(u, v)$ for all edges $(u, v)$, we also have that
$$
\limsup_{N \to \infty} \sum_{i=1}^N \tau_{f(u_i)} - \tau_{f(u_{i-1})} < \infty.
$$
The sum on the right is equal of $\tau_{f(u_N)}$. Therefore this implies that the infection hits infinitely many squares in finite time, contradicting the upper bound of linear growth from Theorem \ref{thm:KS.upper.bound}.

 Next, we check \eqref{E:BR-tau-eqn} and \eqref{E:fRinfty}. Because both the colouring process and the infection process proceed at finite speed, we can do this inductively. Let $t_0 = 0 < t_1 < t_2 < \dots \sset [0, \infty)$ denote the set of times in $[0, \infty)$ that are either equal to $T(u)$ or $\tau_{f(u)}$ for some $u \in \Z^2$. We show that for every $k = 0, 1, \dots,$ that for all $v$ with $T(v) \wedge \tau_{f(v)} \le t_k$, a.s.\ we have
 \begin{itemize}
 	\item $T(v) = \tau_{f(v)}$, and
 	\item if $f(v) \notin \operatorname{IGN}$ then $C(v) = \fB$ and $T(v) = T(u) + X_\fB(u, v)$ for some $u$ with $C(u) = \fB$.
 \end{itemize} 
 At time $t_0 = 0$, we have $T(0) = \tau_{f(0)} = 0$. Moreover, $\tau_B > 0$ for all $B \ne f(0)$ since no particles jump at time $0$ a.s. This also implies that no clocks in $(\fR, \fB)$ ring at time $0$, and hence $T(u) > 0$ for all $u \ne 0$, establishing the $k=0$ case of the inductive claim. 
 
 Now assume the inductive hypothesis holds up to time $t_k$. Consider time $t_{k+1}$, and let $v \in \Z^2$ be such that $t_{k+1} \in \{T(v), \tau_{f(v)} \}$. At least one of the following events holds:
 \begin{enumerate}
 	\item $T(v) = t_{k+1}$ for some $v \in \Z^2$, and there exists $u$ such that $T(u) < T(v)$ and the edge $(u, v)$ fired to colour $v$.
 	\item $\tau_{f(v)} = t_{k+1}$ for some $v \in \Z^2$. Moreover, for every vertex $w$ with $\tau_{f(v)} = \tau_{f(w)}$, we have $v \le_{\operatorname{SI}} w$. 
 	\item $T(v) = t_{k+1}$ for some $v \in \Z^2$, and there exists $u$ such that $T(u) = T(v)$ and the edge $(u, v)$ fired to colour $v$.
 	\item $\tau_{f(v)} = t_{k+1}$ for some $v \in \Z^2$ and there exists a vertex $w \in \Z^2$ with $\tau_{f(v)} = \tau_{f(w)}$ and $w <_{\operatorname{SI}} v$.
 \end{enumerate}

\textbf{Case 1: \qquad } 
  By the inductive hypothesis, we have $\tau_{f(v)} > t_k$ and $\tau_{f(u)} \le t_k$. In particular, $\tau_{f(v)} - \tau_{f(u)} > 0$. If the square $u$ were blue, then we immediately have $T(v) = \tau_{f(v)}$. 
  Now suppose the square $u$ is red. By the definition of $X_\fR$, since $\tau_{f(u)} < \tau_{f(v)}$, to show that $T(v) = \tau_{f(v)}$ we just need to show that
  \begin{equation}
  \label{E:kaxi}
  \tau_{f(v)} - \tau_{f(u)} < \frac{\xi}{\ka_0}.
  \end{equation}
  By the inductive hypothesis, the ignition particle for $f(u)$ is infected at time of ignition and the event $\cA_{f(u)}$ holds. Therefore an infected particle moves from $f(u)$ into the block $f(v)$ in the interval $[\tau_{f(u)}, \tau_{f(u)} +  \xi/\ka_0)$. This yields \eqref{E:kaxi}. 
  
  We now check the second bullet.
  If $f(v) \notin \operatorname{IGN}$, then there exists $w \sim v$ with $\tau_{f(w)} + \xi \ka(u, v)/\ka_0 = \tau_{f(v)}$, and such that $w$ is a coloured blue. By the inductive hypothesis, $\tau_{f(w)} = T(w)$, and so by the definition of $X_\fB$, we have
  $$
  T(v) \le T(w) + X_\fB(w, v) \le \tau_{f(w)} + (\tau_{f(v)} - \tau_{f(w)}) = \tau_{f(v)} = T(v).
  $$
  This would imply that the edge $(w, v)$ coloured the vertex $v$ blue, since blue colouring takes precedent over red colouring. Therefore $C(v) = \fB$ and $T(v) = T(w) + X_\fB(w, v)$, as desired.

\textbf{Case 2: \qquad }  We first claim that there exists a vertex $u \sim v$ with $\tau_{f(u)} < \tau_{f(v)}$. The existence of such a $u$ is clear if $v$ gets coloured by event (a) in Definition \ref{D:SI-colouring}, in which case $u$ satisfies $\tau_{f(u)} + \xi \ka(u, v)/\ka_0 = \tau_{f(v)}$. Moreover, since $v \le_{\operatorname{SI}} w$ for all vertices $w$ that get coloured at the same time as $v$, then $f(v)$ cannot get coloured by the presence of an infected particle in one of its neighbours. Therefore if $f(v)$ is not coloured by rule (a), then $f(v)$ gets coloured by rule (b). That is, an infected particle enters from an adjacent box $f(u)$ with $\tau_{f(u)} < \tau_{f(v)}$.

Now, we have $T(v) \ge t_{k+1}$ by the inductive hypothesis. On the other hand, the definitions of $X_\fR$ and $X_\fB$ imply that
$$
T(v) \le T(u) + \lf(\tau_{f(v)} - \tau_{f(u)} \rg) = \tau_{f(v)} = t_{k+1}.
$$
The first equality follows from the inductive hypothesis. Therefore $T(v) = \tau_{f(v)}$ as desired. 

We move to the second bullet. First, if $v \notin \operatorname{IGN}$, then $f(v)$ is coloured from an adjacent box $f(u)$ via event (a) in Definition \ref{D:SI-colouring}. Now, we have established that $T(v) =\tau_{f(v)}$, and also $T(u) =\tau_{f(u)}$ by the inductive hypothesis, so we have that
\begin{equation}
\label{E:TvTu}
T(v) = T(u) + \xi\ka(u, v)/\ka_0 = T(u) + X_\fB(u, v),
\end{equation}
where the second equality uses the definition of $X_\fB$. Note that from the definition of $X_\fR$, we also have that $T(v) > T(u) + X_\fR(u, v)$. Hence $C(u) = \fB$, since otherwise $v$ would have been coloured earlier in $(\fR, \fB)$.
Finally, since blue colouring takes precedent over red colouring, \eqref{E:TvTu} implies that $C(v) = \fB$ and $v$ was coloured by the clock $X_\fB(u, v)$.

\textbf{Case 3: \qquad } We can find a chain of vertices $u_0 \sim u_1 \sim \dots u_{k-1} = u \sim u_k = v$ for some $k \ge 2$ such that the edge $(u_{i-1}, u_i)$ colours the vertex $u_i$ for all $i$, $T(u_1) = T(v)$ and $T(u_0) < T(u_1)$. By Case 1, $\tau_{f(u_1)} = T(u_1)$. The definitions of $X_R(u_{i-1}, u_i)$ and $X_B(u_{i-1}, u_i)$ then ensure that 
$
\tau_{f(u_{i-1})} = \tau_{f(u_i)},
$
for all $i$, and hence $\tau_{f(v)} = T(v)$. Moreover, since $\tau_{f(v)} = \tau_{f(u)}$ and $u$ and $v$ are comparable in the order $<_{\operatorname{SI}}$, the box $f(v)$ was coloured in $X_t$ according to rule (b) or (c), so $f(v)$ was necessarily ignited.

\textbf{Case 4: \qquad } Let $G$ denote the random graph with vertex set $\Z^2$ and edges $(u_1, u_2)$ whenever $u_1 <_{\operatorname{SI}} u_2$, and let $H$ be the component of this graph containing $w, v$. Note that $H$ has cardinality either $2$ or $3$. For notational simplicity, we proceed in the case when $H$ has cardinality $3$; the other case follows the same argument. Let $v_1, v_2, v_3$ be the three elements of $H$, listed in the $<_{\operatorname{SI}}$ order. We have $\tau_{f(v_i)} = t_{k+1}$ for all $i$, and so the inductive hypothesis ensures that
\begin{equation}
\label{E:taufvii}
\tau_{f(v_i)} \le T(v_i)
\end{equation}
for all $i$.
The vertex $v_1$ satisfies the assumptions of Case $2$. Therefore $T(v_1) = \tau_{f(v_1)}$. The colouring rules then imply that 
$$
T(v_3) \le T(v_2) \le T(v_1),
$$
which implies that \eqref{E:taufvii} is in fact an equality. Finally, since each of the $f(v_i)$ were infected by either event (b) or (c), we have $v_i \in \operatorname{IGN}$ for all $i$.

 \bibliographystyle{alpha}
 
\bibliography{SIR}

\end{document}